\documentclass[a4paper,reqno]{amsart}

\usepackage{amssymb}
\usepackage{latexsym}
\usepackage{amsmath}
\usepackage{amsthm}
\usepackage{euscript}

\usepackage{pgfplots}
\usepackage{mathrsfs}
\usetikzlibrary{arrows}

\usepackage{mathtools}
\usepackage{changepage}
\usepackage{fge}
\usepackage{xcolor}
\usepackage[subnum]{cases}
\usepackage{tkz-euclide,amsmath}
\usetikzlibrary{quotes,angles}
\usetikzlibrary{intersections}
\pgfdeclarelayer{bg}
\pgfsetlayers{bg,main}
\usetikzlibrary{positioning}
\usetikzlibrary{calc}

\def\sa{{\mathfrak a}} 
\def\phi{\varphi}

\def\cD{{\mathcal D}}      
   \def\cH{{\mathcal H}}

\def\ee{{\mathrm e}}

\DeclareMathOperator\Real{{\text{\rm Re}}}

\DeclareMathOperator\sgn{{\text{\rm sgn}}} 
\DeclareMathOperator\dom{{\text{\rm dom\,}}} 

\DeclarePairedDelimiter{\abs}{\lvert}{\rvert}

\newcommand\ga[1]{\Gamma_{#1}}
\DeclareMathOperator{\om}{\Omega}
\newcommand\bnd{\partial \Omega}

\DeclareMathOperator{\R}{\mathbb{R}}
\DeclareMathOperator{\N}{\mathbb{N}}
\DeclareMathOperator{\C}{\mathbb{C}}
\def\dd{\mathrm{d}}


\newcommand\hmix{H^1_{0, \Gamma}(\Omega)}
\newcommand\hmixc{H^1_{0, \Gamma^c}(\Omega)}
\renewcommand\ga{\Gamma}
\newcommand\gac{{\Gamma^c}}
\newcommand\norm[1]{\| {#1} \|}
\newcommand\sign{\mathrm{sign}}
\newcommand\ldd{L^2(\Omega; \mathbb{C}^2)}


\renewcommand\setminus{\mathbin{\mathpalette\rsetminusaux\relax}}
\newcommand\rsetminusaux[2]{\mspace{-4mu}
  \raisebox{\rsmraise{#1}\depth}{\rotatebox[origin=c]{-20}{$#1\smallsetminus$}}
 \mspace{-4mu}
}
\newcommand\rsmraise[1]{%
  \ifx#1\displaystyle .8\else
    \ifx#1\textstyle .8\else
      \ifx#1\scriptstyle .6\else
        .45%
      \fi
    \fi
  \fi}
\usepackage{tikz}

\DeclareMathOperator{\diver}{div}

\makeatletter
\newcommand*{\rom}[1]{\expandafter\@slowromancap\romannumeral #1@}
\makeatother

\definecolor{darkspringgreen}{rgb}{0.09, 0.45, 0.27}

\definecolor{darktangerine}{rgb}{1.0, 0.66, 0.07}
\definecolor{amber}{rgb}{1.0, 0.75, 0.0}

\definecolor{purple}{rgb}{0.6, 0.4, 0.8}

\definecolor{amber(sae/ece)}{rgb}{1.0, 0.49, 0.0}


\newtheorem{theorem}{Theorem}[section]
\newtheorem*{thm*}{Theorem}
\newtheorem{proposition}[theorem]{Proposition}
\newtheorem{corollary}[theorem]{Corollary}
\newtheorem{lemma}[theorem]{Lemma}
\newtheorem{hypothesis}[theorem]{Hypothesis}

\theoremstyle{definition}

\theoremstyle{definition}

\newtheorem{example}[theorem]{Example}

\newtheorem{remark}[theorem]{Remark}

\numberwithin{equation}{section}
\numberwithin{figure}{section}

\title[]{On the first eigenvalue and eigenfunction of the Laplacian with mixed boundary conditions}

\author[N.~Aldeghi]{Nausica Aldeghi}
\author[J.~Rohleder]{Jonathan Rohleder}
\address{Matematiska institutionen \\ Stockholms universitet \\
106 91 Stockholm \\
Sweden}
\email{nausica.aldeghi@math.su.se, jonathan.rohleder@math.su.se}

\keywords{Laplacian, mixed boundary conditions, eigenvalue inequalities, eigenfunctions, hot spots, variational principles}

\begin{document}

\begin{abstract}
We consider the eigenvalue problem for the Laplacian with mixed Dirichlet and Neumann boundary conditions. For a certain class of bounded, simply connected planar domains we prove monotonicity properties of the first eigenfunction. As a consequence, we establish a variant of the hot spots conjecture for mixed boundary conditions. Moreover, we obtain an inequality between the lowest eigenvalue of this mixed problem and the lowest eigenvalue of the corresponding dual problem where the Dirichlet and Neumann boundary conditions are interchanged. The proofs are based on a novel variational principle, which we establish.
\end{abstract}

\maketitle

\section{Introduction}

Eigenvalue problems for the Laplacian with mixed boundary conditions have been studied frequently in the last few years from various points of view \cite{ABIN20,CNT23p,FNO21,FNO22,SY20}. They also serve as tools in the investigation of other related topics such as nodal properties of eigenfunctions or the famous hot spots problem \cite{BH21,JM20}. 

In this article we are concerned with the lowest eigenvalue and the corresponding eigenfunction of the negative Laplacian with mixed Dirichlet--Neumann boundary conditions. For a bounded planar Lipschitz domain $\Omega$, consider the eigenvalue problem
\begin{align}\label{eq:EVproblem}
 \begin{cases}
 \hspace*{1.6mm} - \Delta \psi = \lambda \psi & \text{in}~\Omega, \\
 \nu \cdot \nabla \psi = 0 & \text{on}~\Gamma, \\
 \hspace{7.4mm} \psi = 0 & \text{on}~\Gamma^c;
 \end{cases}
\end{align}
here $\nu$ is the exterior unit normal vector field on the boundary, defined almost everywhere, and $\Gamma, \Gamma^c$ are relatively open, disjoint subsets of the boundary $\partial \Omega$ such that $\overline{\Gamma \cup \Gamma^c} = \partial \Omega$. The problem \eqref{eq:EVproblem} has a discrete sequence of non-negative eigenvalues unbounded from above; see Section \ref{sec:preliminaries} for more details. Its lowest eigenvalue is positive as soon as $\Gamma^c$ is non-empty, which we assume henceforth. The corresponding eigenfunction $\psi_1$ is unique up to multiplication by a scalar and can be chosen positive in $\Omega$. 

The first purpose of this article is connected to a question raised by Ba\~nuelos, Pang and Pascu \cite{BPP04}, see also \cite{BP04}: {\it What conditions must be imposed on $\Gamma$ and $\Gamma^c$ to ensure that the ground state eigenfunction $\psi_1$ attains its maximum only on the boundary?} 
This question is inspired by the hot spots conjecture about the minima and maxima of the first non-constant eigenfunction of the Laplacian with Neumann boundary conditions \cite{AB04,BB99,JN00,JM20} and is closely related to the long time behavior of solutions to the corresponding heat equation. Intuitively, one should expect the hot spots property for mixed boundary conditions to be true if the Dirichlet part is ``not too large'' in an appropriate sense taking into account the geometry of $\partial \Omega$.
In \cite[Corollary 1.2]{BPP04} it was shown that $\psi_1$ takes its maximum exclusively on $\partial \Omega$, i.e.\ on $\Gamma$, if $\Omega$ is convex, $\Gamma$ or $\Gamma^c$ is an arc of a circle,
 and the angles at which $\Gamma$ and $\Gamma^c$ meet are less than $\pi/2$; this extended an earlier result \cite{BP04} in which $\Gamma$ was assumed to be a line segment, see also \cite[Theorem 4.3]{BB99} and \cite{P02}. Very recently this question has attracted more attention: in the preprint \cite{H24p} Hatcher shows that the hot spots property for the problem \eqref{eq:EVproblem} is satisfied if $\Omega$ is a triangle and $\Gamma$ consists of either one or two sides of it, see also \cite{LY24p}, where more general semilinear problems are studied; moreover, Hatcher proves a result for certain domains for which $\Gamma$ is a line segment.

One of the main results of the present article is the following, where we denote by $Q_1, \dots, Q_4$ the first to fourth open quadrants in the plane and by $\ee_1 = (1, 0)^\top$ and $\ee_2 = (0, 1)^\top$ the standard basis vectors. 

\begin{theorem}\label{thm:intro1}
Assume that $\Omega$ is a bounded, simply connected Lipschitz domain with piecewise smooth boundary, that $\Gamma$ is connected, and that the unit normal field $\nu$ satisfies $\nu (x) \in \overline{Q_3}$ for almost all $x \in \Gamma^c$ and $\nu (x) \in \overline{Q_2} \cup \overline{Q_4}$ for almost all $x \in \Gamma$, see Figure \ref{fig:DomainsIntro}. Furthermore, assume that the interior angles of $\partial \Omega$ at which $\Gamma$ and $\Gamma^c$ meet are strictly less than $\pi/2$ and that $\partial \Omega$ has no inward-pointing corners. Then $\psi_1$ is strictly increasing in $\Omega$ in the directions of both $\ee_1$ and $\ee_2$. In particular, $\psi_1$ takes its maximum only on $\Gamma$.
\end{theorem}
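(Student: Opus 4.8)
The plan is to show that the directional derivatives $\partial_1\psi_1$ and $\partial_2\psi_1$ are strictly positive throughout $\Omega$; the assertion about the maximum then follows immediately, since $\psi_1$ vanishes on $\Gamma^c$ and is positive in $\Omega$, so a function that is strictly increasing in the directions $\ee_1$ and $\ee_2$ can attain its maximum over $\overline{\Omega}$ neither in the interior (by strict monotonicity) nor on $\Gamma^c$ (where it equals its minimal value $0$), leaving only $\Gamma$. Fix $e \in \{\ee_1, \ee_2\}$ and set $v := e \cdot \nabla \psi_1 = \partial_e \psi_1$. Differentiating the eigenvalue equation $-\Delta \psi_1 = \lambda_1 \psi_1$ shows that $v$ solves $-\Delta v = \lambda_1 v$ in $\Omega$; thus $v$ is again a solution at the bottom of the spectrum, and the whole point is to prove that $v$ does not change sign and is in fact nonnegative. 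I would first establish the weak inequalities $\partial_1 \psi_1 \ge 0$ and $\partial_2 \psi_1 \ge 0$ and upgrade them to strict inequalities at the end.

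First I would record the boundary behaviour of $\nabla \psi_1$, which is where the geometric hypotheses enter. The assumptions that $\partial\Omega$ is piecewise smooth with no inward-pointing corners and that $\Gamma$ and $\Gamma^c$ meet at angles strictly less than $\pi/2$ guarantee enough regularity of $\psi_1$ up to the boundary—away from the corners, with controlled behaviour at them—that the following computations hold almost everywhere and that $\nabla \psi_1$ is an admissible competitor in the energy arguments below. On $\Gamma^c$ one has $\psi_1 \equiv 0$, so the tangential derivative vanishes and $\nabla \psi_1 = (\partial_\nu \psi_1)\, \nu$; Hopf's lemma gives $\partial_\nu \psi_1 < 0$, and since $\nu \in \overline{Q_3}$ this forces $\nabla \psi_1 \in \overline{Q_1}$, i.e. $\partial_1 \psi_1 \ge 0$ and $\partial_2 \psi_1 \ge 0$ on $\Gamma^c$. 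On $\Gamma$ the Neumann condition $\partial_\nu \psi_1 = 0$ makes $\nabla \psi_1$ tangential, hence parallel to the unit tangent $\tau \perp \nu$; a direct check shows that for $\nu \in \overline{Q_2} \cup \overline{Q_4}$ the tangent $\tau$ is parallel to a vector of $\overline{Q_1}$, so that $\partial_1 \psi_1$ and $\partial_2 \psi_1$ carry the same sign almost everywhere on $\Gamma$. Thus the only way monotonicity can fail is if $\nabla \psi_1$ dips into $\overline{Q_3}$ over part of $\Gamma$ or over an interior region.

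To rule this out globally, the plan is to invoke the variational principle established earlier: it characterises $\lambda_1$ as the minimum of a Rayleigh-type quotient over a function class $\mathcal V$ chosen so that, first, $v = \partial_e \psi_1$ belongs to $\mathcal V$ and realises the minimum, and second, the quotient is of Beurling--Deny (lattice) type, so that its minimisers cannot change sign. The class $\mathcal V$ has to encode the boundary conditions obeyed by a directional derivative: on the smooth parts of $\Gamma$ the function $v$ satisfies a Robin-type condition involving the signed curvature $\kappa$ of $\partial\Omega$, while on $\Gamma^c$ its values are tied to $\partial_\nu \psi_1$ as above. Granting that $v$ realises the minimum, it is a ground state and hence of one sign; combining this with the boundary information of the previous step—$v \ge 0$ on $\Gamma^c$ and $v \not\equiv 0$—fixes the sign and yields $\partial_e\psi_1 \ge 0$ in $\Omega$. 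Applying this to $e = \ee_1$ and $e = \ee_2$ gives both weak monotonicity statements.

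To finish, I would upgrade to strict positivity: since $v \ge 0$ and $-\Delta v = \lambda_1 v \ge 0$, the function $v$ is nonnegative and superharmonic, so by the strong minimum principle either $v > 0$ throughout $\Omega$ or $v \equiv 0$; the latter is impossible, as it would force $\psi_1$ to depend on a single coordinate, which is incompatible with the Dirichlet data on $\Gamma^c$ and the positivity in $\Omega$. Hence $\partial_1 \psi_1 > 0$ and $\partial_2 \psi_1 > 0$ in $\Omega$, which is the desired strict monotonicity. I expect the main obstacle to be the third step: pinning down the exact boundary conditions obeyed by $v = \partial_e \psi_1$ on $\Gamma$—the curvature Robin terms together with the contributions at the corners—and verifying that they fit the variational principle so that its minimiser is sign-definite. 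This is precisely where the hypotheses that the meeting angles lie below $\pi/2$ and that there are no inward-pointing corners should be decisive: they provide the regularity and integrability of $\nabla\psi_1$ needed for admissibility and prevent a sign change from being concealed at a corner.
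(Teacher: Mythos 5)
Your overall strategy --- monotonicity via a variational principle whose minimizers are sign-definite, with the quadrant hypotheses entering through the boundary conditions --- is the right one, and your boundary analysis on $\Gamma$ and $\Gamma^c$ is consistent with what the paper uses. However, there are two genuine gaps in the way you set up the key third step.

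First, there is no scalar variational principle for a single directional derivative $v = \partial_e\psi_1$: the boundary conditions inherited from $\psi_1$ couple the two components of $\nabla\psi_1$ (on $\Gamma$ one has $\nu_1\partial_1\psi_1 + \nu_2\partial_2\psi_1 = 0$, on $\Gamma^c$ one has $\tau_1\partial_1\psi_1 + \tau_2\partial_2\psi_1 = 0$), so $\partial_e\psi_1$ alone does not lie in any natural form domain and cannot ``realise the minimum'' of a scalar Rayleigh quotient. The paper's argument is intrinsically vectorial: one takes a minimizer $u\in\cH_\Gamma$ of the curvature quotient \eqref{eq:minMaxEv1OneForm3} and replaces it by $(|u_1|,|u_2|)^\top$; the quadrant hypotheses are exactly what guarantees $\nu_1\nu_2\le 0$ on $\Gamma$ and $\tau_1\tau_2\le 0$ on $\Gamma^c$, hence that the componentwise absolute value still satisfies the coupled boundary conditions and is again a competitor. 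This is the lattice (Beurling--Deny type) step you are after, but it only works for the pair, not for each component separately. Second, and more seriously, you assert without proof that $\nabla\psi_1$ is a minimizer of the variational principle. The principle of Theorem \ref{thm:VariationalPrincipleOneForm3} computes $\eta_1=\min\{\lambda_1^\Gamma,\lambda_1^{\Gamma^c}\}$, and $\nabla\psi_1$ minimizes it only if $\lambda_1^{\Gamma^c}\le\lambda_1^\Gamma$; if instead $\lambda_1^\Gamma<\lambda_1^{\Gamma^c}$, every minimizer is of the form $\nabla^\perp\phi$ and your argument says nothing about $\psi_1$. Ruling this out is precisely the content of Theorem \ref{thm:intro2} and must be proved simultaneously: the paper shows that a minimizer $\nabla^\perp\phi$ with both components positive would make $\phi$ strictly increasing along an axiparallel path joining two points of $\Gamma$, contradicting $\phi|_\Gamma=0$. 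Without this step (and without the unique-continuation argument excluding a vanishing component, which your ``$\psi_1$ would depend on one coordinate'' remark only sketches), the proof is incomplete.
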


We point out that the assumptions of Theorem \ref{thm:intro1} have the following implication: if there exist points $x_1, x_2 \in \Gamma$ with $\nu (x_1) \in Q_2$ and $\nu (x_2) \in Q_4$, then $\Gamma$ contains at least one corner, a point at which the normal vector jumps between $Q_2$ and $Q_4$. If, in addition, $\Gamma$ does not contain any axio-parallel piece, then this point is unique and it follows from Theorem \ref{thm:intro1} that this is the unique point in $\overline \Omega$ at which $\psi_1$ takes its maximum; see Figure~\ref{fig:DomainsIntro}. This is for instance the case if $\Omega$ is an acute triangle where $\Gamma$ consists of two sides and $\Gamma^c$ is the third. However, in general, under the assumptions of Theorem \ref{thm:intro1} it is also possible that $\Gamma$ contains a horizontal piece, in which case we cannot conclude uniqueness of the maximum point.
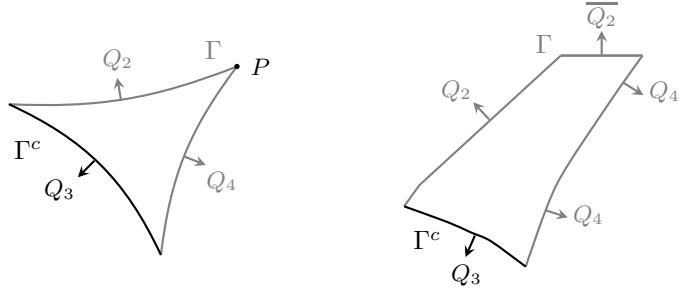
\begin{figure}[h]
\begin{minipage}[c][4cm][c]{0,3\textwidth}
\begin{tikzpicture}[scale=1]
\pgfsetlinewidth{0.8pt}
 \node[white] (A1) at (0.31,0.08) {};
 \node[white] (C1) at (2.06,-1.68) {};
 \draw[white] (A1.center) -- node[sloped,inner sep=0cm,above,pos=1.2, anchor=north east, minimum height=0.3cm,minimum width=1cm](F1){}(1,-0.6) -- (C1.center);
\path (F1.north east) edge[-stealth,shorten <=0.5pt] node[below left,pos=0.6] {\small $Q_3$} (F1.south east);
 \node[white] (W) at (1.83,-1.83) {};
 \node[white] (Z) at (2.74,0.46) {};
\draw[white] (W.center) -- node[sloped,inner sep=0cm,above,pos=1, anchor=north east, minimum height=0.3cm,minimum width=1cm](F4){}(2.285,-0.685) -- (Z.center);
\path[gray] (F4.north east) edge[-stealth,shorten <=0.5pt] node[gray,below right,pos=0.6] {\small $Q_4$} (F4.south east);
 \node[white] (J) at (0.5,-0.14) {};
 \node[white] (K) at (2.46,0.19) {};
\draw[white] (J.center) -- node[sloped,inner sep=0cm,above,pos=1, anchor=south west, minimum height=0.3cm,minimum width=1cm](F5){}(1.48,0.025) -- (K.center);
\path[gray] (F5.south west) edge[-stealth,shorten <=0.5pt] node[gray,above,pos=0.9] {\small $Q_2$} (F5.north west);
\path (0,0) coordinate (A) (3,0.5) coordinate (B) (2,-2) coordinate (C);
\path (0,0) coordinate (A) (3,0.5) coordinate (B) (2,-2) coordinate (C);
\draw[gray] (A) to[bend right=12] (B); 
\draw[gray] (B) to[bend right=15] (C); 
\draw[black] (A) to[bend left=20] (C); 
\draw(0.25,-0.61) node{$\gac$};
\draw(2.69,0.72) node[gray]{$\ga$};
\node[circle,fill=black,inner sep=0pt,minimum size=2pt,label=right:{\small {$P$}}] (C) at (3,0.5){};
\end{tikzpicture}
\end{minipage}
\hspace{1.1cm}
\begin{minipage}[c][4cm][c]{0,3\textwidth}
\begin{tikzpicture}[scale=0.4]
\pgfsetlinewidth{0.8pt}
\node[white] (A) at (2,4) {};
\node[white] (B) at (6,2) {};
\node[white] (C1) at (9.84,9) {};
\node[white] (D) at (7.15202,9) {};
\draw(6.6,9.4) node[gray]{$\ga$};
\node[white] (E) at (3.42633,3.48028) {};
\node[white] (F) at (4.17903,3.14355) {};
\node[white] (G) at (4.8525,2.84643) {};
\node[white] (H) at (7.011,4.71739) {};
\node[white] (I) at (8.74977,7.44076) {};
\draw(2.76,3.5) node[black][left, below]{$\gac$};
\draw[gray] (C1.center) -- node[sloped,inner sep=0cm, above,pos=.5, anchor=south west,
minimum height=0.3cm,minimum width=1cm](M){}(7.15,9) -- (D.center);
\draw[gray] (D.center) -- node[sloped,inner sep=0cm,above,pos=.5, anchor=south west,
minimum height=0.3cm,minimum width=1cm](N){}(2.5,4.7) -- (A.center);
\path[gray] (N.south west) edge[-stealth,shorten <=0.5pt] node[above left,pos=0.5] {\small $Q_2$} (N.north west);
\path[gray] (M.south west) edge[-stealth,shorten <=0.5pt] node[above,pos=0.7] {\small $\overline{Q_2}$} (M.north west);
\draw[white] (F.center) -- node[sloped,inner sep=0cm,above,pos=.5, anchor=north east, minimum height=0.3cm,minimum width=1cm](F1){}(4.51,3) -- (G.center);
\path (F1.north east) edge[-stealth,shorten <=0.5pt] node[below,pos=0.9] {\small $Q_3$} (F1.south east);
\draw[white] (B.center) -- node[sloped,inner sep=0cm,above,pos=.5, anchor=north east, minimum height=0.3cm,minimum width=1cm](F2){}(7.2,5.77) -- (H.center);
\path[gray] (F2.north east) edge[-stealth,shorten <=0.5pt] node[right,pos=0.8] {\small $Q_4$} (F2.south east);
\draw[white] (I.center) -- node[sloped,inner sep=0cm,above,pos=.5, anchor=north east, minimum height=0.3cm,minimum width=1cm](F3){}(9.6,8.8) -- (C1.center);
\path[gray] (F3.north east) edge[-stealth,shorten <=0.5pt] node[right,pos=0.8] {\small $Q_4$} (F3.south east);
\draw[gray] plot[smooth] coordinates {(B) (H) (I) (C1)};
\draw plot[smooth] coordinates {(A) (E) (F) (G) (B)};
\draw[gray] (C1.center) -- (D.center);
\end{tikzpicture}
\end{minipage}
\caption{Two domains for which Theorem \ref{thm:intro1} holds, rotated as required. The quadrants into which the exterior normal points are indicated. On the domain on the left, $\psi_1$ takes its unique maximum at $P$.}
\label{fig:DomainsIntro}
\end{figure}

It is worth mentioning that the conditions in Theorem \ref{thm:intro1} on the directions of the exterior normal vectors of $\Gamma$ and $\Gamma^c$ are natural and essentially necessary for $\psi_1$ to be strictly increasing in the directions of both $\ee_1$ and $\ee_2$. In fact, a transition between neighboring quadrants within $\Gamma$ would force one of $\partial_1 \psi_1$ and $\partial_2 \psi_1$ to change sign, due to the Neumann boundary condition; hence all the normals of $\Gamma$ must belong to two opposite quadrants. A similar reasoning applies to $\Gamma^c$ and its Dirichlet boundary condition, as the latter implies that the derivative of $\psi_1$ in tangential direction along $\Gamma^c$ vanishes. Moreover, the additional assumption that all normals of $\Gamma$ point into (the closure of) $Q_3$, instead of the union of $Q_1$ and $Q_3$, is suggested by the example of a rectangle with Dirichlet boundary conditions on two opposite sides and Neumann conditions on the remaining ones, where the first eigenfunction fails to be monotonic in both coordinate directions.

The second purpose of the present article is to prove an inequality for the first eigenvalue of the mixed problem. Associated with the eigenvalue problem \eqref{eq:EVproblem} we consider the dual problem
\begin{align}\label{eq:EVproblemDual}
 \begin{cases}
 \hspace*{1.6mm} - \Delta \phi = \lambda \phi & \text{in}~\Omega, \\
 \hspace{7.4mm} \phi = 0 & \text{on}~\Gamma, \\
 \nu \cdot \nabla \phi = 0 & \text{on}~\Gamma^c,
 \end{cases}
\end{align}
where the Dirichlet and Neumann boundary conditions have been interchanged. One can ask which of the problems \eqref{eq:EVproblem} and \eqref{eq:EVproblemDual} possesses the lower ground state eigenvalue. Simultaneously with Theorem \ref{thm:intro1} we prove the following result.

\begin{theorem}\label{thm:intro2}
Let the assumptions of Theorem \ref{thm:intro1} be satisfied. Moreover, denote by $\lambda_1^\Gamma$ the lowest eigenvalue of the problem \eqref{eq:EVproblemDual} and by $\lambda_1^{\Gamma^c}$ the lowest eigenvalue of \eqref{eq:EVproblem}. Then
\begin{align*}
 \lambda_1^{\Gamma^c} < \lambda_1^\Gamma
\end{align*}
holds.
\end{theorem}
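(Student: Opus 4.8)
The plan is to avoid comparing the scalar eigenfunctions directly, since their form domains $\hmixc$ (for \eqref{eq:EVproblem}) and $\hmix$ (for \eqref{eq:EVproblemDual}) are not comparable, and instead to work with their gradient fields, for which a single quadratic form on a single constraint space serves both problems once we rotate by $\pi/2$. First I would record that if $\psi$ solves \eqref{eq:EVproblem} with eigenvalue $\lambda$, then $F=\nabla\psi$ lies in
\begin{align*}
 \mathcal V=\bigl\{F\in\ldd:\ \Div F,\ \operatorname{curl}F\in L^2(\Omega),\ \nu\cdot F=0\text{ on }\Gamma,\ \tau\cdot F=0\text{ on }\Gamma^c\bigr\},
\end{align*}
where $\tau$ is a unit tangent field: the Neumann condition on $\Gamma$ gives $\nu\cdot\nabla\psi=0$, while $\psi=0$ on $\Gamma^c$ forces the tangential derivative $\tau\cdot\nabla\psi$ to vanish there. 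The novel variational principle I would establish is that
\begin{align*}
 \lambda_1^{\Gamma^c}=\min_{0\neq F\in\mathcal V}\frac{\int_\Omega\bigl(|\Div F|^2+|\operatorname{curl}F|^2\bigr)\,\dd x}{\int_\Omega|F|^2\,\dd x},
\end{align*}
and, crucially, that every minimizer is a scalar multiple of $\nabla\psi_1$, hence in particular curl-free. This is where Theorem \ref{thm:intro1} and the geometric hypotheses enter: the sign-definiteness of $\nabla\psi_1$ gives simplicity of the lowest mode and thereby pins down the minimizer, while the assumptions that the interior angles are $<\pi/2$ and that there are no inward-pointing corners are what make the boundary terms in the associated integration by parts have the right sign, so that the infimum is attained inside $\mathcal V$.

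Granting this principle, Theorem \ref{thm:intro2} follows from a rotation trick. Let $\phi_1$ be the positive ground state of the dual problem \eqref{eq:EVproblemDual} and put $G=\nabla\phi_1$. The boundary conditions of \eqref{eq:EVproblemDual} say exactly that $\tau\cdot G=0$ on $\Gamma$ (from $\phi_1=0$ on $\Gamma$) and $\nu\cdot G=0$ on $\Gamma^c$. Rotating pointwise by $\pi/2$, set $F_0:=(-\partial_2\phi_1,\partial_1\phi_1)$; one checks the elementary identities $\nu\cdot F_0=-\tau\cdot G$, $\tau\cdot F_0=\nu\cdot G$, $\Div F_0=-\operatorname{curl}G$ and $\operatorname{curl}F_0=\Div G$. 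Thus the two boundary conditions are swapped into precisely those defining $\mathcal V$, so $F_0\in\mathcal V$, and since $G$ is curl-free we get $\Div F_0=0$ and $\operatorname{curl}F_0=\Div G=\Delta\phi_1=-\lambda_1^\Gamma\phi_1$. Using $\int_\Omega|G|^2=\int_\Omega|\nabla\phi_1|^2=\lambda_1^\Gamma\int_\Omega\phi_1^2$ (Green's identity, the boundary term vanishing because $\phi_1=0$ on $\Gamma$ and $\nu\cdot\nabla\phi_1=0$ on $\Gamma^c$), the variational principle yields
\begin{align*}
 \lambda_1^{\Gamma^c}\ \le\ \frac{\int_\Omega\bigl(|\Div F_0|^2+|\operatorname{curl}F_0|^2\bigr)\,\dd x}{\int_\Omega|F_0|^2\,\dd x}=\frac{(\lambda_1^\Gamma)^2\int_\Omega\phi_1^2}{\lambda_1^\Gamma\int_\Omega\phi_1^2}=\lambda_1^\Gamma.
\end{align*}

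To upgrade this to a strict inequality I would argue by contradiction. If $\lambda_1^{\Gamma^c}=\lambda_1^\Gamma$, then $F_0$ is a minimizer in the variational principle, so by the minimizer characterization $F_0=c\,\nabla\psi_1$ for some $c\neq0$. But then $0=\Div F_0=c\,\Delta\psi_1=-c\,\lambda_1^{\Gamma^c}\psi_1$ in $\Omega$, forcing $\psi_1\equiv0$, which contradicts $\psi_1>0$ in $\Omega$. Hence $\lambda_1^{\Gamma^c}<\lambda_1^\Gamma$, as claimed.

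The main obstacle is not this comparison, which is short, but the variational principle it rests on: proving that the vector Rayleigh quotient on $\mathcal V$ is minimized exactly by $\nabla\psi_1$. The delicate points there are the correct functional-analytic setting---so that $F_0$, which a priori lies only in $H(\Div)\cap H(\operatorname{curl})$ rather than $\hdd$, is an admissible competitor even when $\phi_1\notin H^2(\Omega)$ near corners---and, above all, controlling the boundary curvature terms produced by the integration by parts $\int_\Omega(|\Div F|^2+|\operatorname{curl}F|^2)=\int_\Omega|\nabla F|^2+\oint_{\partial\Omega}(\cdots)$. It is exactly here that the hypotheses of interior angles $<\pi/2$ and no inward-pointing corners, together with the monotonicity supplied by Theorem \ref{thm:intro1}, must be used to guarantee that the minimizer is the gradient field $\nabla\psi_1$ and not some spurious divergence-free competitor.
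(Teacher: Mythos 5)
Your reduction of the inequality to a div--curl variational principle on the constraint space $\mathcal V$ (which is the paper's $\cH_\Gamma$) is the right framework, and your test-field computation with $F_0=\nabla^\perp\phi_1$ correctly shows that the minimum of the vector Rayleigh quotient is $\le\lambda_1^\Gamma$; this is exactly Proposition \ref{prop:TranslateEV}\,(ii) and Theorem \ref{thm:VariationalPrincipleOneForm2} in disguise. The gap is in the principle you ``grant'': what one can actually prove at that stage is that the minimum equals $\eta_1=\min\{\lambda_1^\Gamma,\lambda_1^{\Gamma^c}\}$ and that the minimizers are exactly the fields $\nabla\psi+\nabla^\perp\phi$ with $\psi\in\ker(-\Delta_{\Gamma^c}-\eta_1)$ and $\phi\in\ker(-\Delta_{\Gamma}-\eta_1)$. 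Your stronger claim --- that the minimum is $\lambda_1^{\Gamma^c}$ and every minimizer is a scalar multiple of $\nabla\psi_1$ --- is logically equivalent to the strict inequality $\lambda_1^{\Gamma^c}<\lambda_1^\Gamma$ you are trying to prove (if the two eigenvalues were equal, $\nabla^\perp\phi_1$ would also be a minimizer, so the characterization would fail). Your sketch of why the characterization holds does not close this circle: ``sign-definiteness of $\nabla\psi_1$ pins down the minimizer'' presupposes Theorem \ref{thm:intro1}, which the paper proves \emph{simultaneously} with the inequality, and ``the boundary terms have the right sign so the infimum is attained'' is not the relevant issue --- the infimum is always attained since $A$ has discrete spectrum; the problem is identifying in which of the two orthogonal components $\nabla\hmixc$ and $\nabla^\perp\hmix$ it is attained.

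The missing argument, which is the actual content of the paper's proof, runs as follows. One rewrites the form with a curvature boundary term (Proposition \ref{prop:equalForms}), takes an arbitrary minimizer $u$ and replaces it by $v=(|u_1|,|u_2|)^\top$; the quadrant hypotheses on $\nu$ along $\Gamma$ and $\Gamma^c$ are what guarantee $v$ still satisfies the constraints $v|_\Gamma\cdot\nu=0$, $v|_{\Gamma^c}\cdot\tau=0$, so $v$ is again a minimizer and hence satisfies $-\Delta v_j=\eta_1 v_j\ge 0$. The maximum principle plus a unique continuation argument then show both components of any minimizer are nowhere zero in $\Omega$ and of equal sign. Finally, if one had $\eta_1=\lambda_1^\Gamma$, then $u=\nabla^\perp\phi_1$ would be such a minimizer, making $\phi_1$ strictly monotone along an axis-parallel path inside $\Omega$ joining two points of $\Gamma$ at which $\phi_1$ vanishes --- a contradiction. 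Only after this does one know $\eta_1=\lambda_1^{\Gamma^c}<\lambda_1^\Gamma$ and that minimizers are gradients of first eigenfunctions of $-\Delta_{\Gamma^c}$. Your final contradiction step (if equality held, $F_0$ would be a minimizer with $\Div F_0=0$, forcing $\psi_1\equiv 0$) is then superfluous, because by that point the characterization you invoke already contains the strict inequality.
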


This result extends and complements our result \cite[Theorem 3.1]{AR23}, where certain convex domains and straight $\Gamma^c$ were considered, see also \cite{A24p} for some results in higher dimensions. It is also related to recent results for triangles and other domains in \cite{R20,S16}. We point out that under the assumptions of Theorem \ref{thm:intro2} the length of $\Gamma^c$ is less than the length of $\Gamma$.

The methods we are using to prove both Theorems \ref{thm:intro1} and \ref{thm:intro2} are variational, yet non-standard. Instead of relying on the classical variational principle for the eigenvalues of the mixed Laplacian, we establish a novel variational principle, where the minimizers are gradients of eigenfunctions. A similar approach was used by the second author to approach the classical hot spots conjecture on the second Neumann Laplacian eigenfunction for a class of domains \cite{R21p} and extended to higher dimensions in \cite{KR24p}. A special case of what we prove here is the following theorem; cf.\ Theorems \ref{thm:VariationalPrincipleOneForm2} and \ref{thm:VariationalPrincipleOneForm3} below, where the results are stated precisely.

\begin{theorem}\label{thm:intro3}
Assume that $\Omega$ is a bounded, simply connected Lipschitz domain and that $\Gamma$ is connected. Let $\eta_1 = \min \{\lambda_1^\Gamma, \lambda_1^{\Gamma^c}\}$. Then
\begin{align}\label{eq:var1intro}
 \eta_1 = \min_{\substack{u = \binom{u_1}{u_2} \in \cH_\Gamma\\ u \neq 0}} \frac{\int_\Omega |\diver u|^2 + |\omega (u)|^2}{\int_\Omega |u|^2},
\end{align}
where $\omega (u) = \partial_1 u_2 - \partial_2 u_1$ and $\cH_\Gamma$ consists of all sufficiently regular vector fields $u = (u_1, u_2)^\top$ such that $u \cdot \nu = 0$ on $\Gamma$ and $u \cdot \tau = 0$ on $\Gamma^c$, where $\tau$ denotes the unit tangent vector along the boundary.

If, in addition, $\Omega$ has piecewise smooth boundary without inward-pointing corners and the interior angles of $\partial \Omega$ at which $\Gamma$ and $\Gamma^c$ meet are less than $\pi/2$, then
\begin{align}\label{eq:varintro2}
 \eta_1 = \min_{\substack{u = \binom{u_1}{u_2} \in \cH_\Gamma \\ u \neq 0}} \frac{\int_\Omega \left( |\nabla u_1|^2 + |\nabla u_2|^2 \right) - \int_{\partial \Omega} \kappa |u|^2}{\int_\Omega |u|^2},
\end{align}
where $\kappa$ denotes the signed curvature of $\partial \Omega$, with the sign chosen such that $\kappa \leq 0$ if $\Omega$ is convex.

In both variational principles, the minimizers can be expressed in terms of gradients of eigenfunctions of the problems \eqref{eq:EVproblem} and \eqref{eq:EVproblemDual} corresponding to their lowest eigenvalues.
\end{theorem}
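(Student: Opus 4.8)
The plan is to realise $\eta_1$ as the bottom of the spectrum of the self-adjoint operator $T$ associated in $\ldd$ with the quadratic form $a(u,u)=\int_\Omega|\diver u|^2+|\omega(u)|^2$ and form domain $\cH_\Gamma$; standard form methods give such a $T$ with compact resolvent, so that $\min\sigma(T)$ equals the infimum of the quotient in \eqref{eq:var1intro}. For the inequality $\min\sigma(T)\le\eta_1$ I would test with gradients of eigenfunctions. If $\psi_1$ denotes the ground state of \eqref{eq:EVproblem} and I set $u=\nabla\psi_1$, then $\diver u=\Delta\psi_1=-\lambda_1^{\gac}\psi_1$ and $\omega(u)=0$, while $u\cdot\nu=\partial_\nu\psi_1=0$ on $\Gamma$ and $u\cdot\tau=\partial_\tau\psi_1=0$ on $\gac$; hence $u\in\cH_\Gamma$ and, after one integration by parts, the quotient equals $\lambda_1^{\gac}$. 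Dually, for the ground state $\phi_1$ of \eqref{eq:EVproblemDual} I would take the rotated gradient $u=\nabla^\perp\phi_1:=(-\partial_2\phi_1,\partial_1\phi_1)^\top$, for which $\diver u=0$, $\omega(u)=\Delta\phi_1=-\lambda_1^\Gamma\phi_1$, and the interchanged Dirichlet/Neumann conditions of \eqref{eq:EVproblemDual} translate precisely into $u\cdot\nu=0$ on $\Gamma$ and $u\cdot\tau=0$ on $\gac$, so again $u\in\cH_\Gamma$ with quotient $\lambda_1^\Gamma$. Thus $\min\sigma(T)\le\eta_1$.

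For the reverse inequality I would analyse a ground state $u$ of $T$, with $Tu=\mu u$ and $\mu=\min\sigma(T)$. The decisive point is the elementary identity $-\nabla(\diver u)-\nabla^\perp(\omega(u))=-\Delta u$, which shows that the Euler--Lagrange equation reads $-\Delta u_1=\mu u_1$ and $-\Delta u_2=\mu u_2$, so that all coupling between the components sits in the boundary conditions. Reading the essential conditions off $\cH_\Gamma$ and the natural conditions off $a$, one obtains $u\cdot\nu=\omega(u)=0$ on $\Gamma$ and $u\cdot\tau=\diver u=0$ on $\gac$. I would then pass to the scalar functions $f=\diver u$ and $g=\omega(u)$. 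Since $\diver$ and $\omega$ commute with $\Delta$, they satisfy $-\Delta f=\mu f$ and $-\Delta g=\mu g$, and a short computation in boundary-adapted coordinates (using $u_2=0$ and $\partial_2u_1=0$ on a flat piece of $\Gamma$, respectively $u_1=0$ and $\partial_2u_2=0$ on $\gac$, together with the componentwise equation) yields $\partial_\nu f=0$ on $\Gamma$ and $\partial_\nu g=0$ on $\gac$. As moreover $f=0$ on $\gac$ and $g=0$ on $\Gamma$, the function $f$ solves \eqref{eq:EVproblem} and $g$ solves \eqref{eq:EVproblemDual}. Hence $\mu\ge\lambda_1^{\gac}$ whenever $f\not\equiv0$, and $\mu\ge\lambda_1^\Gamma$ whenever $g\not\equiv0$, giving $\mu\ge\eta_1$ in either case.

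It then remains to rule out the degenerate possibility $f\equiv g\equiv0$. In that case $\diver u=\omega(u)=0$, so $-\Delta u=0$ and $\mu\,\|u\|^2=a(u,u)=0$; as $u\ne0$ this would force $\mu=0$. But on the simply connected $\Omega$ a curl-free field is a gradient, $u=\nabla h$, and $\Delta h=\diver u=0$ together with $u\cdot\nu=0$ on $\Gamma$ and $u\cdot\tau=0$ on $\gac$ makes $h$ harmonic with vanishing Neumann data on $\Gamma$ and locally constant Dirichlet data on $\gac$; since $\gac\ne\emptyset$ this forces $h$ constant and $u=0$, a contradiction. Therefore $\mu=\min\sigma(T)=\eta_1$, and the minimizers are exactly the fields $\nabla\psi_1$ and $\nabla^\perp\phi_1$ constructed above (one of them, or both when $\lambda_1^\Gamma=\lambda_1^{\gac}$), which establishes \eqref{eq:var1intro} together with the assertion on the minimizers.

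For the second variational principle I would start from the pointwise identity $|\diver u|^2+|\omega(u)|^2=|\nabla u_1|^2+|\nabla u_2|^2+2\det\nabla u$, so that only the null Lagrangian $\int_\Omega\det\nabla u$ must be turned into a boundary term. Writing $\det\nabla u\,dx=du_1\wedge du_2$ and applying Stokes' theorem gives $\int_\Omega\det\nabla u=\tfrac12\int_{\partial\Omega}(u_1\partial_\tau u_2-u_2\partial_\tau u_1)\,ds$. Decomposing $u=(u\cdot\tau)\tau+(u\cdot\nu)\nu$ and using the Frenet relations $\partial_\tau\tau=\kappa\nu$ and $\partial_\tau\nu=-\kappa\tau$ in the sign convention of the theorem, the integrand becomes $-\kappa|u|^2+\big((u\cdot\nu)\partial_\tau(u\cdot\tau)-(u\cdot\tau)\partial_\tau(u\cdot\nu)\big)$; the bracket vanishes on $\Gamma$ because $u\cdot\nu\equiv0$ there and on $\gac$ because $u\cdot\tau\equiv0$ there. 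Hence $2\int_\Omega\det\nabla u=-\int_{\partial\Omega}\kappa|u|^2$, which converts $a(u,u)$ into the numerator of \eqref{eq:varintro2}, and \eqref{eq:var1intro} then yields \eqref{eq:varintro2}. I expect the main obstacle to lie here rather than in the algebra: one must justify the integration by parts and the existence of the relevant boundary traces for fields in $\cH_\Gamma$ on a merely piecewise smooth boundary, and control the behaviour of the minimizers $\nabla\psi_1$ and $\nabla^\perp\phi_1$ near the corners. This is precisely where the hypotheses that $\partial\Omega$ have no inward-pointing corners and that $\Gamma$ and $\gac$ meet at angles strictly less than $\pi/2$ enter, since they ensure enough regularity of the eigenfunctions at the corners for the curvature integral to be finite and for the boundary reduction above to be valid.
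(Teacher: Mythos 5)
Your test-field computation for the upper bound (inserting $\nabla\psi_1$ and $\nabla^\perp\phi_1$ into the Rayleigh quotient) and your curvature identity (the null-Lagrangian formula $|\diver u|^2+|\omega(u)|^2=|\nabla u_1|^2+|\nabla u_2|^2+2\det\nabla u$ combined with Stokes' theorem and the Frenet relations) agree in substance with the paper. The lower bound is where you take a genuinely different route, and it contains two real gaps. First, you claim that ``standard form methods'' give an operator $T$ with compact resolvent. They do not: the form norm on $\cH_\Gamma$ controls only $u$, $\diver u$ and $\omega(u)$ in $L^2$, and the compactness of the embedding of $H(\diver;\Omega)\cap H(\omega;\Omega)$ with mixed normal/tangential boundary conditions into $L^2(\Omega;\C^2)$ is precisely the Maxwell compactness property (Weck's selection theorem) --- a nontrivial result on Lipschitz domains, not a consequence of abstract form theory. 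Since your entire lower-bound argument analyses ``a ground state $u$ of $T$'', the existence of that ground state is unproved, and without discreteness $\min\sigma(T)$ need not even be an eigenvalue. The paper avoids compactness altogether: it proves the mixed Helmholtz decomposition $L^2(\Omega;\C^2)=\nabla H^1_{0,\Gamma^c}(\Omega)\oplus\nabla^\perp H^1_{0,\Gamma}(\Omega)\oplus H_c$, shows $H_c=\{0\}$ when $\Omega$ is simply connected and $\Gamma$ connected, and verifies that the fields $(\lambda_k^{\Gamma^c})^{-1/2}\nabla\psi_k$ and $(\lambda_k^{\Gamma})^{-1/2}\nabla^\perp\phi_k$ form orthonormal bases of the two summands, so the spectrum of the operator is exhausted by construction and the min-max principle applies directly.

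Second, your derivation of $\partial_\nu f=0$ on $\Gamma$ and $\partial_\nu g=0$ on $\Gamma^c$ for $f=\diver u$, $g=\omega(u)$ evaluates second derivatives of $u$ on the boundary. For \eqref{eq:var1intro} the boundary is merely Lipschitz and $u\in\cH_\Gamma$ need not belong to $H^1(\Omega;\C^2)$, so these traces are undefined; you would moreover need $f\in H^1_{0,\Gamma^c}(\Omega)$ to conclude $\mu\geq\lambda_1^{\Gamma^c}$, which is never established, and the same difficulty reappears at the corners in the second principle. A repair in your own spirit that avoids all boundary regularity: test the weak equation $\sa[u,v]=\mu\int_\Omega u\cdot v$ with $v=\nabla\chi$ for $\chi\in\dom(-\Delta_{\Gamma^c})$ to obtain $\int_\Omega f\,\overline{(\Delta\chi+\mu\chi)}=0$, which forces $f=0$ unless $\mu$ is an eigenvalue of $-\Delta_{\Gamma^c}$, and dually for $g$. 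Two smaller points: in the degenerate case $f\equiv g\equiv 0$ you should say explicitly that the connectedness of $\Gamma^c$ (which follows from $\Omega$ simply connected and $\Gamma$ connected) is what makes the Dirichlet datum of the potential a single constant --- the paper's square example shows the conclusion fails when $\Gamma^c$ is disconnected; and the claim that \emph{all} minimizers are of the form $\nabla\psi_1$ or $\nabla^\perp\phi_1$ again requires the completeness of the eigenfield system, i.e.\ the decomposition you set out to avoid.
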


For the understanding of the curvature term, it should be mentioned that the vector fields in $\cH_\Gamma$ vanish at all corners of $\partial \Omega$, in an appropriate weak sense, due to their boundary conditions. Hence the distributional nature of the curvature at corner points need not be taken into account. The first variational principle \eqref{eq:var1intro} applies to quite general situations, as it in particular does not require that the transition between $\Gamma$ and $\Gamma^c$ happens at corners. The second variational problem \eqref{eq:varintro2} naturally requires more regularity of $u$, which is ensured by the additional assumptions on $\partial \Omega$. It will turn out that under the assumptions of Theorem \ref{thm:intro1} one has $\eta_1 = \lambda_1^{\Gamma^c}$ and the corresponding minimizer $u$ of \eqref{eq:var1intro} respectively \eqref{eq:varintro2} equals $\nabla \psi_1$, the gradient of the first eigenfunction of \eqref{eq:EVproblem}.

This article is organized as follows. In Section \ref{sec:preliminaries} some necessary preliminaries are provided. Section \ref{sec:sec3} is devoted to the proof of the variational principles of Theorem \ref{thm:intro3}. Finally, in Section \ref{sec:main} these principles are applied to monotonicity properties of eigenfunctions and eigenvalue inequalities, that is, Theorems \ref{thm:intro1} and \ref{thm:intro2} are proven.

\section{Preliminaries}
\label{sec:preliminaries}

In this section we fix some notation and provide some preliminary material.

\subsection{Function spaces}

Let $\Omega \subset \mathbb{R}^2$ be a bounded, connected Lipschitz domain, see, e.g. \cite[Chapter 3]{M00}. By Rademacher's theorem, for almost all $x \in \bnd$ there exists a uniquely defined exterior unit normal vector $\nu(x) = (\nu_1(x), \nu_2(x))^\top$ and a unit tangent vector $\tau(x) = (\tau_1(x), \tau_2(x))^\top=(-\nu_2(x), \nu_1(x))^\top$ in the direction of positive orientation of the boundary. We denote by $H^s(\om)$, $s > 0$, the $L^2$-based Sobolev space of order $s$ on $\om$. On the boundary we will make use of the Sobolev space $H^{1/2}(\bnd)$ and its dual space $H^{-1/2}(\bnd)$. Recall that there exists a unique bounded, everywhere defined, surjective trace map from $H^1(\om)$ onto $H^{1/2}(\bnd)$ which continuously extends the mapping
\begin{equation*}
 C^\infty(\overline{\om}) \ni \phi \mapsto \phi |_{\bnd};
\end{equation*}
we write $\phi |_{\bnd}$ for the trace of $\phi \in H^1(\om)$ on $\partial \Omega$. Moreover, we denote by $(\cdot,\cdot)_{\bnd}$ the sesquilinear duality between $H^{-1/2}(\bnd)$ and $H^{1/2}(\bnd)$; in particular, 
\begin{equation*}
 (\phi, \psi)_{\bnd} = \int_{\bnd} \phi \, \overline{\psi}, \quad \psi \in H^{1/2}(\bnd),
\end{equation*}
if $\phi \in L^2(\partial \om) \subset H^{-1/2}(\bnd)$. 

Let us next come to spaces of vector fields. As usual, we denote by $L^2 (\Omega; \C^2)$ the space of square-integrable complex two-component vector fields on $\Omega$. For $u=(u_1,u_2)^{\top} \in L^2 (\Omega; \C^2)$ we define its {\em divergence} $\diver u$ and {\em vorticity} (or scalar curl) $\omega (u)$, respectively, by
\begin{equation*}
 \diver{u} \coloneqq \partial_1 u_1 + \partial_2 u_2 \quad \text{and} \quad \omega (u) \coloneqq \partial_1 u_2 - \partial_2 u_1,
\end{equation*}
where the derivatives are taken in the sense of distributions. In order to define the normal trace of a vector field in a weak sense, we consider the space
\begin{equation*}
 H (\diver; \Omega) \coloneqq \Biggl\{ u \in L^2(\om; \mathbb{C}^2) : \diver{u} \in L^2(\om) \Biggr\}
\end{equation*}
of square-integrable vector fields with square-integrable divergence. It is a Hilbert space when equipped with its natural norm defined by
\begin{equation*}
 \norm{u}^2_{H (\diver; \Omega)} = \int_{\om} \left(\abs{u_1}^2+\abs{u_2}^2+\abs{\diver{u}}^2 \right), \quad u \in H (\diver; \Omega).
\end{equation*}
The mapping
\begin{equation*}
 C^\infty(\overline{\om}; \mathbb{C}^2) \ni u \mapsto u|_{\bnd} \cdot \nu,
\end{equation*} 
where $\cdot$ denotes the standard inner product in $\mathbb{C}^2$, extends continuously to a bounded linear operator from $H (\diver; \Omega)$ into $H^{-1/2}(\partial \om)$; see \cite[Chapter I, Theorem 2.5]{GR}. We write $u|_{\bnd} \cdot \nu$ for the image of $u \in H (\diver; \Omega)$ under this mapping and call it the \textit{normal trace} of $u$. In particular, the integration by parts formula
\begin{align}
\label{eq:PI}
 \int_\Omega u \cdot \nabla \phi + \int_\Omega  \diver u \, \overline{\phi} = \big( u |_{\partial \Omega} \cdot  \nu, \phi \big)_{\partial \Omega}
\end{align}
holds for all $\phi \in H^1(\om)$ and $u \in H (\diver; \Omega)$. Analogously, once we equip the space
\begin{align*}
 H (\omega; \Omega) \coloneqq \Biggl\{ u \in L^2(\om; \mathbb{C}^2) : \omega (u) \in L^2(\om) \Biggr\}
\end{align*}
with the norm
\begin{align*}
 \norm{u}^2_{H (\omega; \Omega)} = \int_{\om} \left(\abs{u_1}^2+\abs{u_2}^2+\abs{\omega (u)}^2 \right), \quad u \in H (\omega; \Omega),
\end{align*}
the mapping 
\begin{equation*}
\quad C^\infty(\overline{\om}; \mathbb{C}^2) \ni u \mapsto u|_{\bnd} \cdot \tau
\end{equation*} 
extends continuously to a bounded linear operator from $H (\omega; \Omega)$ into $H^{-1/2}(\partial \om)$; we write $u|_{\bnd} \cdot \tau$ for the image of $u \in H (\omega; \Omega)$ under this mapping, and call it the \textit{tangential trace} of $u$. If we define $u^\perp := (- u_2, u_1)^\top$ and $\nabla^\perp \phi := (\nabla \phi)^\perp$, then
\begin{align}
\label{eq:PIomega}
 \int_\Omega u \cdot \nabla^\perp \phi + \int_\Omega \omega (u) \, \overline{\phi} = - \big(u^\perp |_{\bnd} \cdot \nu, \phi |_{\bnd} \big)_{\partial \Omega} = \big(u|_{\bnd} \cdot \tau, \phi |_{\bnd} \big)_{\partial \Omega}
\end{align}
holds for all $\phi \in H^1 (\Omega)$ and $u \in H (\omega; \Omega)$. The identity \eqref{eq:PIomega} can be deduced from \eqref{eq:PI} by noting that $\omega (u) = - \diver u^\perp$ and $\nu^\perp = \tau$.

It is well known that the space $L^2 (\Omega; \C^2)$ enjoys an orthogonal decomposition into the space $\nabla H^1 (\Omega)$ and the space of divergence-free vector fields $u$ with $u |_{\partial \Omega} \cdot \nu = 0$. In this paper we will use a variant of this decomposition which is tailor-made for the treatment of mixed boundary conditions. To this purpose, for any non-empty, relatively open set $\Sigma \subset \bnd$ we denote by $H_{0, \Sigma}^1 (\Omega)$ the Sobolev space
\begin{equation*}
 H_{0, \Sigma}^1 (\Omega) = \left\{ \phi \in H^1(\om): \phi |_{\Sigma}=0 \right\},
\end{equation*}
where $\phi |_\Sigma$ denotes the restriction of the trace $\phi |_{\bnd}$ to $\Sigma$. We say that a distribution $\psi \in H^{-1/2}(\bnd)$ vanishes on $\Sigma$, and write $\psi|_\Sigma = 0$, if
\begin{equation*}
 \left (\psi, \phi|_{\bnd} \right)_{\bnd} = 0
\end{equation*}
holds for all $\phi \in H_{0, \partial \Omega \setminus \overline{\Sigma}}^1 (\Omega)$. In this sense we write $u |_\Sigma \cdot \nu := (u |_{\partial \Omega} \cdot \nu) |_{\Sigma}$ for $u \in H (\diver; \Omega)$ and define analogously $u |_\Sigma \cdot \tau$ for $u \in H (\omega; \Omega)$. 

From now on let us fix two relatively open, non-empty sets $\Gamma, \Gamma^c \subset \partial \Omega$ such that
\begin{align}\label{eq:GammaDef}
 \Gamma \cap \Gamma^c = \emptyset \quad \text{and} \quad \overline{\Gamma \cup \Gamma^c} = \partial \Omega.
\end{align}
We define the space
\begin{equation*}
 H_c := \left\{ u \in L^2 (\Omega; \C^2) : \diver{u} = \omega(u) = 0, u |_{\Gamma} \cdot \nu = 0, u |_{{\Gamma^c}} \cdot \tau = 0 \right\};
\end{equation*}
note that it depends on $\ga$. Then $H_c$ is a closed subspace of $L^2(\Omega; \mathbb{C}^2)$ and the orthogonal decomposition 
\begin{equation}
\label{eq:Helmholtz}
 L^2(\Omega; \mathbb{C}^2) = \nabla \hmixc \oplus \nabla^{\perp} \hmix \oplus H_c
\end{equation}
holds. It is a variant of the classical Helmholtz decomposition and can, e.g., be deduced from \cite[Theorem 5.2]{BPS19} in the language of differential forms; for the convenience of the reader, we provide an elementary proof for our setting in Appendix \ref{sec:appendixHH}.

\subsection{The Laplacian with mixed boundary conditions}

Here we briefly recall the definition of the Laplacian with mixed boundary conditions. We assume again that $\Omega \subset \R^2$ is a bounded, connected Lipschitz domain and that $\Gamma$ and $\Gamma^c$ are non-empty, relatively open subsets of its boundary satisfying \eqref{eq:GammaDef}. For each $\psi \in H^1 (\Omega)$ such that $\Delta \psi \in L^2 (\Omega)$ distributionally, the gradient vector field $u := \nabla \psi$ belongs to $H (\diver; \Omega) \cap H (\omega; \Omega)$ as $\diver u = \Delta \psi \in L^2 (\Omega)$ and $\omega (u) = 0$. In particular, the \textit{normal derivative} and \textit{tangential derivative}
\begin{equation*}
 \partial_\nu \psi|_{\bnd} \coloneqq \nabla \psi |_{\bnd} \cdot \nu \quad \text{and} \quad \partial_\tau \psi|_{\bnd} \coloneqq \nabla \psi|_{\bnd} \cdot \tau
\end{equation*}
of $\psi$ are well-defined in $H^{-1/2}(\bnd)$ according to the previous section. If $\psi$ is more regular, say $\psi \in H^2(\om)$, then the normal and tangential derivatives can alternatively be defined via the ordinary trace map. Hence we can define the negative Laplacian $- \Delta_\Gamma$ subject to a Dirichlet boundary condition on $\ga$ and a Neumann boundary condition on $\gac$ as 
\begin{equation*}
 -\Delta_{\ga} \psi = -\Delta \psi, \quad \dom{(-\Delta_{\ga} )} = \left\{ \psi \in \hmix: \Delta \psi \in L^2(\om), \partial_\nu \psi |_{\gac} = 0 \right\}.
\end{equation*}
The operator $-\Delta_{\ga}$ corresponds to the non-negative, closed quadratic form
\begin{equation*}
 \hmix \ni \psi \mapsto \int_{\om} \abs{\nabla \psi}^2
\end{equation*}
in the sense of \cite[Chapter VI, Theorem 2.1]{Kato}; cf.\ Proposition \ref{prop:Kato} below. It is self-adjoint in $L^2(\om)$ and its spectrum consists of a discrete sequence of positive eigenvalues with finite multiplicities converging to $+\infty$. We point out that the operator $- \Delta_{\Gamma^c}$ with the dual boundary conditions, i.e.\ Dirichlet on $\Gamma^c$ and Neumann on $\Gamma$, is defined analogously.

Next we state a little lemma which in particular applies to functions in the domain of $- \Delta_\Gamma$. Intuitively, for a function being constant on a set $\Sigma \subset \bnd$, its tangential derivative must vanish on $\Sigma$. We give a short proof to make sure that this statement is true for the weakly defined tangential derivative and does not require any additional regularity of $\partial \Omega$.

\begin{lemma}
\label{lem:TangentialDerivative}
Assume that $\Sigma \subset \partial \Omega$ is relatively open and non-empty. Let $\phi \in H_{0, \Sigma}^1 (\Omega)$ such that $\Delta \phi \in L^2 (\Omega)$. Then 
\begin{equation*}
 \partial_\tau \phi |_\Sigma = 0
\end{equation*}
holds.
\end{lemma}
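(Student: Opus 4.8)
The plan is to pass from the weak tangential trace back to an interior integral via the integration by parts formula \eqref{eq:PIomega}, and then to exploit the Dirichlet-type condition $\phi|_\Sigma = 0$ through a complementary-support argument against smooth test functions. First I would record that $u := \nabla\phi \in H(\omega;\Omega)$ with $\omega(u)=0$, since $\omega(\nabla\phi)=\partial_1\partial_2\phi-\partial_2\partial_1\phi=0$ distributionally; hence $\partial_\tau\phi = \nabla\phi\cdot\tau$ is well defined in $H^{-1/2}(\partial\Omega)$. The hypothesis $\Delta\phi\in L^2(\Omega)$ guarantees $\nabla\phi\in H(\diver;\Omega)\cap H(\omega;\Omega)$, but the argument below uses only $\phi\in H^1(\Omega)$ with $\phi|_\Sigma=0$. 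By the definition of vanishing on $\Sigma$, the claim $\partial_\tau\phi|_\Sigma=0$ amounts to $(\partial_\tau\phi,\chi|_{\partial\Omega})_{\partial\Omega}=0$ for all $\chi\in H_{0,\partial\Omega\setminus\overline{\Sigma}}^1(\Omega)$. Applying \eqref{eq:PIomega} to $u=\nabla\phi$ and the test function $\chi$, and using $\omega(\nabla\phi)=0$, this pairing equals $\int_\Omega\nabla\phi\cdot\nabla^\perp\chi$, so it suffices to show that this interior integral vanishes.

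The key step treats first those $\chi\in H_{0,\partial\Omega\setminus\overline{\Sigma}}^1(\Omega)$ that are smooth on $\overline\Omega$ and vanish in a neighbourhood of $\partial\Omega\setminus\Sigma$, so that the boundary trace $\chi|_{\partial\Omega}$ is smooth and compactly supported in $\Sigma$. For such $\chi$ the field $\nabla^\perp\chi$ is divergence free, and \eqref{eq:PI} applied to $\nabla^\perp\chi$ and the test function $\phi$ gives
\[ \int_\Omega\nabla\phi\cdot\nabla^\perp\chi = \int_\Omega\nabla^\perp\chi\cdot\nabla\phi = \big(\nabla^\perp\chi|_{\partial\Omega}\cdot\nu,\ \phi|_{\partial\Omega}\big)_{\partial\Omega} = -\int_{\partial\Omega}(\partial_\tau\chi)\,\overline{\phi|_{\partial\Omega}}, \]
where I used $\nabla^\perp\chi\cdot\nu=-\partial_\tau\chi$ (the relation $\nu^\perp=\tau$ from the preliminaries) and that, $\chi$ being smooth, $\partial_\tau\chi$ is the classical arc-length derivative of $\chi|_{\partial\Omega}$, an $L^2(\partial\Omega)$ function supported in $\operatorname{supp}(\chi|_{\partial\Omega})\subset\subset\Sigma$. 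Since $\phi|_\Sigma=0$, the integrand vanishes almost everywhere and the whole expression is $0$.

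It then remains to remove the smoothness assumption on $\chi$ by density and continuity. The functional $\chi\mapsto(\partial_\tau\phi,\chi|_{\partial\Omega})_{\partial\Omega}$ is continuous on $H^1(\Omega)$, being the composition of the bounded trace map $H^1(\Omega)\to H^{1/2}(\partial\Omega)$ with the fixed functional $\partial_\tau\phi\in H^{-1/2}(\partial\Omega)$. Smooth functions vanishing near $\partial\Omega\setminus\Sigma$ are dense in $H_{0,\partial\Omega\setminus\overline{\Sigma}}^1(\Omega)$ for the $H^1$-norm (equivalently, boundary data in $C_c^\infty(\Sigma)$ are dense in the trace space of $H_{0,\partial\Omega\setminus\overline{\Sigma}}^1(\Omega)$). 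Hence the vanishing established for smooth $\chi$ extends to all $\chi\in H_{0,\partial\Omega\setminus\overline{\Sigma}}^1(\Omega)$, which is the assertion.

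The step I expect to be the main obstacle is precisely this last density statement on a general Lipschitz boundary: identifying the trace space of $H_{0,\partial\Omega\setminus\overline{\Sigma}}^1(\Omega)$ and verifying that boundary data compactly supported in $\Sigma$ are dense in it, without assuming any regularity of $\partial\Omega$ beyond Lipschitz — in particular, the transition set $\partial\Omega\setminus(\Sigma\cup(\partial\Omega\setminus\overline{\Sigma}))$ must be handled. Everything else — the two integration by parts formulas and the complementary-support computation in the smooth case — is routine. As an alternative that sidesteps boundary-condition-constrained density, one may instead identify $\partial_\tau\phi$ with the intrinsic arc-length distributional derivative of $\phi|_{\partial\Omega}$ (by density of $C^\infty(\overline\Omega)$ in $H^1(\Omega)$ and continuity of both maps) and then observe that the derivative of a trace vanishing on the open set $\Sigma$ again vanishes on $\Sigma$.
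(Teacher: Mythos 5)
Your argument is correct, but it runs in the opposite direction to the paper's. The paper approximates $\phi$ itself: by \cite[Theorem 1]{DZ06} there is a sequence $(\phi_n)_n \subset C^\infty(\overline\Omega)\cap H^1_{0,\Sigma}(\Omega)$ with $\phi_n \to \phi$ in $H^1(\Omega)$, hence $\nabla^\perp\phi_n \to \nabla^\perp\phi$ in $H(\diver;\Omega)$ (all these fields being divergence-free), and the continuity of the normal trace map $H(\diver;\Omega)\to H^{-1/2}(\partial\Omega)$ then gives $\partial_\tau\phi|_\Sigma = -\nabla^\perp\phi|_\Sigma\cdot\nu = -\lim_n \nabla^\perp\phi_n|_\Sigma\cdot\nu = 0$, since each smooth $\phi_n$ vanishes identically on $\Sigma$ and so has vanishing classical tangential derivative there. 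You instead keep $\phi$ fixed, unwind the definition of the weak trace as a pairing against $\chi \in H^1_{0,\partial\Omega\setminus\overline\Sigma}(\Omega)$, integrate by parts twice to land the derivative on $\chi$, and then approximate the test function. Both routes are legitimate and both ultimately rest on a Doktor--\v{Z}en\'i\v{s}ek-type density theorem, but applied to complementary boundary portions: the paper needs density of smooth functions in $H^1_{0,\Sigma}(\Omega)$, which is exactly what \cite{DZ06} provides, whereas you need smooth functions vanishing near $\partial\Omega\setminus\Sigma$ to be dense in $H^1_{0,\partial\Omega\setminus\overline\Sigma}(\Omega)$ --- a statement of the same nature but slightly more delicate, since your approximants must also vanish near the transition set $\overline\Sigma\setminus\Sigma$, which is not covered verbatim by the cited density result (though it is harmless here, as that set has zero capacity in the relevant configurations). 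You correctly single this out as the only non-routine step; the paper's choice of which object to approximate is precisely what lets it sidestep that issue and quote \cite{DZ06} directly. A minor cosmetic point: your observation that $\Delta\phi\in L^2(\Omega)$ is not actually needed applies equally to the paper's proof, which likewise uses only $\diver\nabla^\perp\phi=0$.
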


\begin{proof}
By the assumptions on $\phi$ we have $\nabla^\perp \phi \in \ldd$ and $\diver \nabla^\perp \phi = 0$. Thus, $\nabla^\perp \phi \in H (\diver; \Omega)$. Since $\phi \in H_{0, \Sigma}^1 (\Omega)$, there exists a sequence $(\phi_n)_n \subset  C^\infty(\overline \Omega) \cap H^1_{0, \Sigma} (\Omega)$ such that $\phi_n \to \phi$ in $H^1(\om)$, see, e.g., \cite[Theorem 1]{DZ06}. Then $\nabla^\perp \phi_n \to \nabla^\perp \phi$ in $H (\diver; \Omega)$ due to $\diver \nabla^\perp \phi_n = 0$ for all $n$ and $\diver \nabla^\perp \phi = 0$. In particular,
\begin{equation*}
 \partial_\tau \phi |_{\Sigma} = - \nabla^\perp \phi|_{\Sigma} \cdot \nu = - \lim_{n \to \infty} \nabla^\perp \phi_n|_{\Sigma} \cdot \nu = 0,
\end{equation*}
proving the lemma.
\end{proof}

Finally, let us emphasize that in general $\dom (- \Delta_\Gamma)$ is not contained in $H^2 (\Omega)$. However, such regularity holds under assumptions on the angles at the corners at which the transition between Dirichlet and Neumann boundary conditions takes place. 

\begin{proposition}
\label{prop:RegularityAngles}
If $\partial \Omega$ is piecewise smooth, contains no inward-pointing corners, and the interior angles at all points at which $\Gamma$ and $\Gamma^c$ meet are less or equal $\pi/2$, then
\begin{align*}
 \dom{(-\Delta_\Gamma)} \subset  H^2(\om)
\end{align*}
holds.
\end{proposition}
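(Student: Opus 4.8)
The plan is to exploit the purely local nature of elliptic regularity and to reduce the global claim to a singularity analysis at the finitely many corners of $\partial\Omega$. Fix $\psi \in \dom{(-\Delta_\Gamma)}$, so that $\psi \in \hmix$, the function $f := -\Delta\psi$ lies in $L^2(\Omega)$, and $\partial_\nu \psi|_{\gac} = 0$; thus $\psi$ solves the mixed boundary value problem with homogeneous Dirichlet data on $\ga$, homogeneous Neumann data on $\gac$, and right-hand side $f \in L^2(\Omega)$. First I would choose a finite open cover of $\overline{\Omega}$ together with a subordinate partition of unity. On patches contained in $\Omega$, interior elliptic regularity gives $\psi \in H^2$ locally; on patches meeting $\partial\Omega$ along a smooth arc carrying a single (Dirichlet or Neumann) condition, the standard $H^2$-regularity theory for the Dirichlet, respectively Neumann, problem—after flattening the boundary by a local smooth diffeomorphism—again yields $\psi \in H^2$ locally. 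Hence the entire question is reduced to the behaviour of $\psi$ near each of the finitely many corners of $\partial\Omega$, including the transition corners where $\ga$ meets $\gac$.

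Near a fixed corner $P$ with interior opening angle $\omega$, I would straighten the two adjacent smooth boundary arcs by a local diffeomorphism, mapping a neighbourhood of $P$ onto a plane sector of opening $\omega$; this preserves the $H^2$-class and the type of each boundary condition, and the leading singular exponents at the vertex depend only on $\omega$ and on the boundary-condition pattern (they are governed by the frozen-coefficient operator at $P$). In this model the Kondrat'ev–Grisvard theory for the mixed Laplacian with $L^2$ right-hand side applies: the solution decomposes as $\psi = \psi_{\mathrm{reg}} + \sum_j c_j\, r^{\alpha_j}\Phi_j(\theta)$ with $\psi_{\mathrm{reg}} \in H^2$, where $(r,\theta)$ are polar coordinates at $P$ and the $\alpha_j$ are the singular exponents produced by separation of variables. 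When both arcs carry the same condition these are $\alpha = k\pi/\omega$ with $k \ge 1$ (the constant mode $k=0$ in the Neumann–Neumann case being absorbed into $\psi_{\mathrm{reg}}$), whereas at a transition corner, with one arc Dirichlet and the other Neumann, one finds $\Phi = \sin(\alpha\theta)$ and the condition $\cos(\alpha\omega)=0$, giving $\alpha = (2k+1)\pi/(2\omega)$ with $k \ge 0$. The decisive criterion is that $\psi$ is locally $H^2$ precisely when no singular exponent lies in the open interval $(0,1)$; indeed a single term $r^\alpha\Phi(\theta)$ has second derivatives scaling like $r^{\alpha-2}$, and $\int_0^1 r^{2(\alpha-2)}\,r\,\dd r$ converges for $\alpha > 1$, while the endpoint $\alpha = 1$ only occurs for angular profiles making $r\Phi$ a harmonic polynomial.

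It then remains to verify that the hypotheses exclude every singular exponent from $(0,1)$. At a same-type corner the absence of inward-pointing corners forces $\omega < \pi$, so the smallest exponent $\pi/\omega$ exceeds $1$ and no obstruction arises. At a transition corner the assumption $\omega \le \pi/2$ gives the smallest exponent $\pi/(2\omega) \ge 1$, hence again none in $(0,1)$, while all higher exponents $(2k+1)\pi/(2\omega)$ with $k \ge 1$ are automatically larger than $1$. The main obstacle, and the reason the \emph{non-strict} bound $\omega \le \pi/2$ suffices, is the borderline case $\omega = \pi/2$: there the exponent equals exactly $1$, and although $r\Phi(\theta)$ is in general \emph{not} in $H^2$, here the relevant profile is $\Phi=\sin\theta$, so that $r\Phi = r\sin\theta = y$ is the linear (hence harmonic and $C^\infty$) function, contributing no genuine singularity. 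Consequently, under the stated hypotheses the singular part is either absent or smooth near every corner, so $\psi \in H^2$ in a neighbourhood of each corner; patching the local $H^2$-bounds through the partition of unity yields $\psi \in H^2(\Omega)$, which proves the proposition.
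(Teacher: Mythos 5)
Your argument is correct and follows exactly the route that the paper outsources to the literature: the paper's ``proof'' of Proposition \ref{prop:RegularityAngles} consists of citing \cite[Theorem 2.3.7]{G92} for the polygonal case, \cite{KO83} for smooth boundary pieces, and \cite{A81}, \cite{W70} for transporting the result to curved corners, and what you write out (localization, straightening, the Kondrat'ev--Grisvard singular expansion, and the check that no exponent falls in $(0,1)$) is precisely the content of those references. Your explicit treatment of the borderline angle $\pi/2$ --- where the exponent equals $1$ but the profile $r\sin\theta = y$ is a harmonic polynomial, as one also sees by even reflection across the Neumann edge --- is the one point requiring genuine care, and you handle it correctly.
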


This regularity result for mixed boundary conditions is rather well known. For the case of a polygon the statement follows from \cite[Theorem 2.3.7]{G92} (see also the discussion below the theorem there). In the more general case, $H^2$-regularity of the functions in $\dom (- \Delta_\Gamma)$ away from the corners where the transition between Dirichlet and Neumann condition takes place is a standard result, see e.g.\ the survey \cite{KO83}. The corners at which $\Gamma$ and $\Gamma^c$ meet can be locally mapped conformally into a sector of the same angle and the result carries over from the polygonal case, see \cite{A81} or \cite[Section 5]{W70}.

Note that the assumptions of the previous proposition imply that the change between Neumann and Dirichlet boundary conditions happens at corners only, not in the interior of a smooth arc. This is not required in the definition of $- \Delta_\Gamma$ above.

\subsection{Sesquilinear forms and self-adjoint operators}

In this section we review briefly the theory of closed non-negative sesquilinear forms in Hilbert spaces and associated non-negative self-adjoint operators as to be found for instance in \cite[Chapter VI]{Kato} or \cite[Section X.3]{RS75}.

Let $\cH$ be a separable Hilbert space with inner product $(\cdot,\cdot)$ and corresponding norm $\norm{\cdot}$, and let $\sa$ be a sesquilinear form in $\cH$ with domain $\dom{\sa}$, that is, $\dom{\sa}$ is a linear subspace of $\cH$ and $\sa: \dom{\sa} \times \dom{\sa} \to \mathbb{C}$ is linear in the first entry and anti-linear in the second. A sesquilinear form $\sa$ is said to be \textit{symmetric} if the corresponding quadratic form $\sa[u] \coloneqq \sa[u,u]$ is real for all $u \in \dom{\sa}$, and \textit{non-negative} if $\sa[u] \ge 0$ for all $u \in \dom{\sa}$. In this case, 
\begin{equation*}
 (u,v)_{\sa} \coloneqq \sa[u,v] + (u,v)
\end{equation*}
defines an inner product on $\dom{\sa}$, and $\sa$ is said to be \textit{closed} if $\dom{\sa}$ equipped with $(\cdot,\cdot)_{\sa}$ and the associated norm $\|\cdot\|_\sa$ is a Hilbert space. The following correspondence between sesquilinear forms and self-adjoint operators is well known.

\begin{proposition}
\label{prop:Kato}
Assume that $\sa$ is a symmetric, non-negative, closed sesquilinear form whose domain $\dom \sa$ is dense in $\cH$. Then the following assertions hold.
\begin{enumerate}
\item There exists a unique self-adjoint, non-negative operator $A$ in $\cH$ with $\dom A \subset \dom \sa$ such that
\begin{align*}
 (A u, v) = \sa [u, v], \quad u \in \dom A, v \in \dom \sa.
\end{align*}
Moreover, $u \in \dom \sa$ belongs to $\dom A$ if and only if there exists $w \in \cH$ such that $\sa [u, v] = (w, v)$ holds for all $v \in \dom \sa$; in this case, $A u = w$.
 \item If, in addition, $\dom \sa$ is compactly embedded into $\cH$ then the spectrum of $A$ is purely discrete, i.e.\ it consists of isolated eigenvalues with finite multiplicities. Upon enumerating these eigenvalues non-decreasingly according to their multiplicities,
\begin{align*}
 \eta_1 \leq \eta_2 \leq \dots,
\end{align*}
the min-max principle
\begin{align*}
 \eta_j = \min_{\substack{F \subset \dom \sa \\ \dim F = j}} \;\; \max_{u \in F \setminus \{0\}} \frac{\sa [u]}{\|u\|^2}
\end{align*}
holds. In particular, the lowest eigenvalue $\eta_1$ of $A$ is given by
\begin{align}
\label{eq:minPrinciple}
 \eta_1 = \min_{\substack{u \in \dom \sa\\ u \neq 0}} \frac{\sa [u]}{\|u\|^2},
\end{align}
and $u \in \dom \sa$ with $u \neq 0$ is an eigenvector of $A$ corresponding to $\eta_1$ if and only if $u$ minimizes \eqref{eq:minPrinciple}.
\end{enumerate}
\end{proposition}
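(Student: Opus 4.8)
The plan is to establish (i) by the Riesz representation theorem applied to the form Hilbert space $\cH_+ := (\dom\sa, (\cdot,\cdot)_\sa)$, and (ii) by reducing to the spectral theorem for compact self-adjoint operators. For (i), since $\sa$ is symmetric, non-negative and closed, $\cH_+$ is indeed a Hilbert space and the inclusion $\cH_+ \hookrightarrow \cH$ is bounded because $\norm{u} \le \norm{u}_\sa$. For each $w \in \cH$ the functional $v \mapsto (w,v)$ is bounded and antilinear on $\cH_+$, so by Riesz there is a unique $Bw \in \dom\sa$ with $(w,v) = (Bw, v)_\sa$ for all $v \in \dom\sa$. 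I would then verify the routine facts that $B \colon \cH \to \cH$ is linear, bounded, self-adjoint and non-negative, and that it is injective, the latter because $Bw = 0$ forces $(w, v) = 0$ for all $v$ in the dense subspace $\dom\sa$. Setting $A := B^{-1} - I$ with $\dom A := \ran B$ then yields a non-negative self-adjoint operator with $\dom A \subset \dom\sa$, and for $u = Bw \in \dom A$, $v \in \dom\sa$ the computation $\sa[u,v] = (Bw,v)_\sa - (Bw,v) = (w,v) - (u,v) = (Au, v)$ gives the asserted identity.

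The characterization of $\dom A$ follows by running this backwards: if $u \in \dom\sa$ and $\sa[u,v] = (w,v)$ for all $v \in \dom\sa$, then $(u,v)_\sa = (w+u, v)$, so $u = B(w+u) \in \ran B = \dom A$ and $Au = w$. Uniqueness of $A$ I would obtain by the standard adjoint argument: any self-adjoint operator $A'$ with the same representing property satisfies, for $u \in \dom A'$ and $v \in \dom A$, the chain $(A'u, v) = \sa[u,v] = \overline{\sa[v,u]} = \overline{(Av,u)} = (u, Av)$, whence $A' \subset A^* = A$; as $A'$ is itself self-adjoint, $A' = A$.

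For (ii), if $\dom\sa$ embeds compactly into $\cH$ then $B$, being the composition of the bounded map $\cH \to \cH_+$ with the compact embedding $\cH_+ \hookrightarrow \cH$, is compact. The spectral theorem for the compact, self-adjoint, non-negative, injective operator $B$ produces an orthonormal basis $(v_k)$ of $\cH$ with $B v_k = \mu_k v_k$, $\mu_k > 0$ and $\mu_k \to 0$; consequently $v_k \in \dom A$, $A v_k = \eta_k v_k$ with $\eta_k = \mu_k^{-1} - 1 \to \infty$, so the spectrum of $A$ is discrete. The crucial step for the min-max principle is to transfer this decomposition to the form: a short computation gives $(v_k, v_l)_\sa = (1 + \eta_k)\delta_{kl}$, so $(v_k / \sqrt{1 + \eta_k})_k$ is an orthonormal basis of $\cH_+$, and every $u \in \dom\sa$ therefore expands as $u = \sum_k (u, v_k) v_k$ in $\cH_+$, with $\norm{u}^2 = \sum_k |(u,v_k)|^2$ and $\sa[u] = \norm{u}_\sa^2 - \norm{u}^2 = \sum_k \eta_k |(u, v_k)|^2$. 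Granting this identity, the formula follows by the classical Courant--Fischer argument: testing with the span of $v_1, \dots, v_j$ gives the upper bound $\eta_j$, while in any $j$-dimensional subspace $F \subset \dom\sa$ one can pick a nonzero $u$ with $(u, v_1) = \dots = (u, v_{j-1}) = 0$, for which the expansion yields $\sa[u] \ge \eta_j \norm{u}^2$. Specializing to $j = 1$ gives \eqref{eq:minPrinciple}, and the same expansion shows that $\sa[u] = \eta_1 \norm{u}^2$ holds exactly when $(u, v_k) = 0$ for every $k$ with $\eta_k > \eta_1$, i.e.\ precisely when $u$ lies in the eigenspace of $A$ for $\eta_1$. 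The constructions in (i) are routine Hilbert space manipulations; I expect the main obstacle to be this final step of (ii), namely justifying the termwise form identity $\sa[u] = \sum_k \eta_k |(u,v_k)|^2$ for all $u \in \dom\sa$ rather than only for $u \in \dom A$, since it is this identity, and not the spectral theorem for $B$ alone, that drives the Courant--Fischer lower bound over the full form domain.
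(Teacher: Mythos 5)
The paper does not prove this proposition at all: it is stated as a known result with references to Kato's book (Chapter VI) and Reed--Simon, so there is no in-paper argument to compare against. Your proposal is the standard proof of exactly that cited result (Kato's first representation theorem via the bounded non-negative operator $B$ obtained from the Riesz theorem on the form Hilbert space $\cH_+$, followed by the Courant--Fischer argument), and it is correct. The one step you state slightly too quickly is the passage from ``$(v_k,v_l)_\sa=(1+\eta_k)\delta_{kl}$'' to ``$(v_k/\sqrt{1+\eta_k})_k$ is an orthonormal \emph{basis} of $\cH_+$'': orthonormality is immediate, but completeness needs the observation that $\sa[u,v_k]=\overline{(Av_k,u)}=\eta_k(u,v_k)$, hence $(u,v_k)_\sa=(1+\eta_k)(u,v_k)$, so that $\cH_+$-orthogonality to all $v_k$ forces $\cH$-orthogonality to all $v_k$ and thus $u=0$; the same relation is what identifies the $\cH_+$-Fourier coefficients with $(u,v_k)$ and yields your key identity $\sa[u]=\sum_k\eta_k|(u,v_k)|^2$ on all of $\dom\sa$. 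With that line filled in, your argument is complete, and you have correctly isolated the point (validity of the termwise form identity on $\dom\sa$ rather than merely on $\dom A$) on which the min-max principle actually rests.
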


Sometimes it will be useful to work on a slightly smaller space than $\dom \sa$. Recall that a subspace $\cD \subset \dom \sa$ is called a \textit{core} of $\sa$ if $\cD$ is dense in $\dom \sa$ with respect to $\|\cdot\|_\sa$, and that $\dom A$ is always a core of $\sa$.

\begin{lemma}\label{lem:core}
Assume that $\sa$ is a symmetric, non-negative, densely defined, closed sesquilinear form in $\cH$ and that $A$ is the corresponding self-adjoint, non-negative operator of Proposition \ref{prop:Kato}. Assume that $\dom \sa$ is compactly embedded into $\cH$ and that $\cD \subset \dom A$ is a subspace such that $\cD$ contains all eigenvectors of $A$. Then $\cD$ is a core of $\sa$.
\end{lemma}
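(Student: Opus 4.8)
The plan is to exploit the spectral structure that the compact embedding $\dom \sa \hookrightarrow \cH$ forces on $A$. By Proposition \ref{prop:Kato} the operator $A$ then has purely discrete spectrum, so there exists an orthonormal basis $(v_j)_{j \geq 1}$ of $\cH$ consisting of eigenvectors, $A v_j = \eta_j v_j$, with $\eta_j \to + \infty$. By hypothesis every $v_j$ lies in $\cD$, and since $\cD$ is a subspace, all finite linear combinations of the $v_j$ belong to $\cD$ as well. The whole argument reduces to showing that such finite combinations are $\|\cdot\|_\sa$-dense in $\dom \sa$.

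Since $\dom A$ is a core of $\sa$, it suffices to approximate, in the form norm $\|\cdot\|_\sa$, an arbitrary $u \in \dom A$ by elements of $\cD$. Given such $u$, I would expand $u = \sum_j c_j v_j$ with $c_j = (u, v_j)$ and set $u_N = \sum_{j \leq N} c_j v_j \in \cD$. Because $u_N$ is a finite combination of eigenvectors, $A u_N = \sum_{j \leq N} \eta_j c_j v_j$; and since $u \in \dom A$, the spectral expansion $A u = \sum_j \eta_j c_j v_j$ converges in $\cH$. Hence $u - u_N \in \dom A$ with $A (u - u_N) = \sum_{j > N} \eta_j c_j v_j$.

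Using the identity $\sa[w] = (A w, w)$, valid for $w \in \dom A$ by Proposition \ref{prop:Kato}, together with orthonormality, the key computation reads
\begin{align*}
 \| u - u_N \|_\sa^2 = \sa[u - u_N] + \norm{u - u_N}^2 = \sum_{j > N} (\eta_j + 1) \abs{c_j}^2 .
\end{align*}
Since $\sum_j (\eta_j + 1) \abs{c_j}^2 = \sa[u] + \norm{u}^2 = \| u \|_\sa^2 < \infty$, this tail tends to $0$ as $N \to \infty$. Thus $u_N \to u$ in $\|\cdot\|_\sa$, which shows that $\cD$ is dense in $\dom A$, and therefore in $\dom \sa$, with respect to the form norm; that is, $\cD$ is a core of $\sa$.

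I do not expect a serious obstacle here: the argument is a routine spectral-expansion estimate. The one point requiring care is the justification that the form evaluates on the tail as $\sa[u - u_N] = \sum_{j > N} \eta_j \abs{c_j}^2$; this relies on $u_N$ genuinely belonging to $\dom A$ (so that $A$ may be applied termwise to the finite sum) and on the convergence of $A u = \sum_j \eta_j c_j v_j$ in $\cH$, which uses $u \in \dom A$ rather than merely $u \in \dom \sa$. Reducing to $\dom A$ at the outset, via the fact that $\dom A$ is always a core, is precisely what makes these manipulations legitimate.
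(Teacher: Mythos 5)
Your argument is correct and follows essentially the same route as the paper: reduce to $u \in \dom A$ using that $\dom A$ is a core, truncate the eigenfunction expansion to get $u_N \in \cD$, and show $\|u - u_N\|_\sa \to 0$. The only cosmetic difference is that you evaluate the tail $\sa[u-u_N] = \sum_{j>N}\eta_j |c_j|^2$ explicitly, whereas the paper bounds $\sa[u-u_N] = (A(u-u_N), u-u_N)$ by Cauchy--Schwarz; both are valid.
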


\begin{proof}
Let $(v_j)_j \subset \dom A$ be an orthonormal basis of $\cH$ such that $A v_j = \eta_j v_j$ holds for all $j \in \N$, where $\eta_j$ are the eigenvalues of $A$. Let $u \in \dom A$ and define, for each $N \in \N$,
\begin{align*}
 u_N := \sum_{j = 1}^N (u, v_j) v_j \in \cD.
\end{align*}
By the spectral theorem, $u_N$ converges to $u$ in $\cH$ and 
\begin{align*}
 A u_N = \sum_{j = 1}^N \eta_j (u, v_j) v_j
\end{align*}
converges to $A u$ in $\cH$. Hence,
\begin{align*}
 \|u - u_N\|_\sa^2 & = \|u - u_N\|^2 + \big( A (u - u_N), u - u_N \big) \\
 & \leq \|u - u_N\|^2 + \| A u - A u_N\| \|u - u_N\| \to 0
\end{align*}
as $N \to \infty$. Since $\dom A$ is a core of $\sa$, the claim follows.
\end{proof}

\section{Variational principles for mixed Dirichlet--Neumann Laplacian eigenvalues}
\label{sec:sec3}

In this section we establish two variants of a variational principle for the eigenvalues of the Laplacian with mixed boundary conditions. The first variant does not require any regularity, in addition to Lipschitz, on the boundary of $\Omega$. The second one, which will involve the curvature of the boundary, will need some smoothness.

\subsection{A div-curl variational principle for mixed boundary conditions}

In this subsection we generalize \cite[Section~3]{R23p}, where the case of Neumann and Dirichlet, instead of mixed, boundary conditions was considered. Throughout this section we impose the following assumption.

\begin{hypothesis}\label{hyp1}
The set $\Omega \subset \mathbb{R}^2$ is a bounded, connected Lipschitz domain and $\Gamma, \Gamma^c \subset \bnd$ are non-empty, relatively open subsets of the boundary such that 
\begin{align*}
 \Gamma \cap \Gamma^c = \emptyset \quad \text{and} \quad \overline{\Gamma \cup \Gamma^c} = \partial \Omega.
\end{align*}
\end{hypothesis}

We define a sesquilinear form $\sa$ in $L^2(\Omega; \mathbb{C}^2)$ by
\begin{equation}
\label{eq:aForm}
 \sa[u,v] = \int_{\Omega} \Big( \diver{u} \, \overline{\diver{v}} + \omega(u) \, \overline{\omega(v)} \Big), \quad u = \binom{u_1}{u_2}, v =  \binom{v_1}{v_2},
\end{equation}
with domain
\begin{align}\label{eq:aFormDomain}
\begin{split}
 \dom \sa & = \big\{ u \in L^2(\Omega; \C^2) : \diver{u}, \omega(u) \in L^2(\Omega), u |_{\Gamma} \cdot \nu = 0, \, u |_{\Gamma^c} \cdot \tau = 0 \big\}.
\end{split}
\end{align}
Note that the normal and tangential traces are well defined in the sense of Section~\ref{sec:preliminaries} as $\dom \sa \subset H (\diver; \Omega) \cap H (\omega; \Omega)$. Since 
\begin{equation*}
 \sa[u] = \int_{\Omega} \Big( \abs{\diver{u}}^2 + \abs{\omega(u)}^2 \Big) \ge 0,
\end{equation*}
one sees immediately that $\sa$ is symmetric and non-negative, so that 
\begin{equation*}
(u,v)_{\sa} : = \sa[u,v] + \int_\Omega u \cdot v
\end{equation*}
defines an inner product on the space $\cH_{\ga} \coloneqq \dom{\sa}$. 

The following proposition ensures that the form $\sa$ induces a self-adjoint operator $A$ in $L^2 (\Omega; \C^2)$.

\begin{proposition}\label{prop:FormToOpA}
Let Hypothesis \ref{hyp1} be satisfied. Then the sesquilinear form $\sa$ is closed, and $\dom \sa$ is dense in $L^2(\Omega; \mathbb{C}^2)$. In particular, there exists a self-adjoint operator $A$ in $L^2 (\Omega; \C^2)$ such that 
\begin{equation*}
 \int_\Omega A u \cdot v = \sa[u,v], \quad u \in \dom A, v \in \dom \sa.
\end{equation*}
A vector field $u \in \dom \sa$ belongs to $\dom A$ if and only if there exists $w \in L^2 (\Omega; \C^2)$ such that
\begin{equation*}
 \sa[u,v] = \int_\Omega w \cdot v
\end{equation*}
holds for all $v \in \dom \sa$, and in this case, $Au = w$. Moreover, the spectrum of $A$ is bounded below by 0.
\end{proposition}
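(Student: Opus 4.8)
The plan is to obtain the operator $A$ and all of its stated properties directly from Proposition~\ref{prop:Kato}, so that the real task reduces to checking that $\sa$ is symmetric, non-negative, densely defined, and closed. Symmetry and non-negativity are immediate from \eqref{eq:aForm}, since $\sa[v,u] = \overline{\sa[u,v]}$ and $\sa[u] = \int_\Omega \big( |\diver u|^2 + |\omega(u)|^2 \big) \ge 0$. For density I would simply observe that $C_c^\infty(\Omega; \C^2) \subset \dom \sa$: any compactly supported smooth vector field has $\diver u, \omega(u) \in L^2(\Omega)$ and vanishing trace on $\bnd$, so in particular $u|_\Gamma \cdot \nu = 0$ and $u|_{\Gamma^c} \cdot \tau = 0$. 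As $C_c^\infty(\Omega; \C^2)$ is dense in $\ldd$, so is $\dom \sa$.

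The substance of the argument is closedness. I would first note that the form norm is exactly the natural norm of $H(\diver; \Omega) \cap H(\omega; \Omega)$, namely $\norm{u}_\sa^2 = \int_\Omega \big( |u|^2 + |\diver u|^2 + |\omega(u)|^2 \big)$, and that this intersection is complete in $\norm{\cdot}_\sa$: a $\norm{\cdot}_\sa$-Cauchy sequence $(u_n)$ has $u_n, \diver u_n, \omega(u_n)$ converging in $L^2$ to some $u, f, g$, and since the distributional divergence and vorticity operators are continuous from $L^2$ into the space of distributions, the $L^2$-limits $f$ and $g$ must coincide with $\diver u$ and $\omega(u)$, so that $u_n \to u$ in $\norm{\cdot}_\sa$. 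It therefore remains to show that the two boundary constraints define a closed subspace of this complete space.

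This is the only step requiring care, and it hinges on the continuity of the weak trace maps together with the meaning of ``vanishing on a subset'' recalled in Section~\ref{sec:preliminaries}. The normal trace $u \mapsto u|_{\bnd} \cdot \nu$ is bounded from $H(\diver; \Omega)$ into $H^{-1/2}(\bnd)$, hence from $(\dom \sa, \norm{\cdot}_\sa)$, and likewise $u \mapsto u|_{\bnd} \cdot \tau$ is bounded from $H(\omega; \Omega)$. For each fixed $\phi \in H^1_{0, \bnd \setminus \overline{\Gamma}}(\Omega)$ the functional $u \mapsto \big( u|_{\bnd} \cdot \nu, \phi|_{\bnd} \big)_{\bnd}$ is then continuous on $(\dom \sa, \norm{\cdot}_\sa)$, being a composition of the trace map with the fixed duality pairing; its kernel is closed, and intersecting over all such $\phi$ shows that $\{u : u|_\Gamma \cdot \nu = 0\}$ is closed. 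The tangential condition $\{u : u|_{\Gamma^c} \cdot \tau = 0\}$ is handled identically, testing against $\phi \in H^1_{0, \bnd \setminus \overline{\Gamma^c}}(\Omega)$. Consequently $\dom \sa$ is a closed subspace of the complete space $\big( H(\diver; \Omega) \cap H(\omega; \Omega), \norm{\cdot}_\sa \big)$ and is therefore itself complete, i.e. $\sa$ is closed.

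Having verified all hypotheses, Proposition~\ref{prop:Kato}(i) provides the self-adjoint, non-negative operator $A$ with $\int_\Omega Au \cdot v = \sa[u,v]$ for $u \in \dom A$, $v \in \dom \sa$, together with the characterization of $\dom A$ and the rule $Au = w$; non-negativity of $A$ forces $\sigma(A) \subset [0, \infty)$, giving the asserted lower bound on the spectrum. The main obstacle, as indicated, is not the existence of $A$ but the verification of closedness, and within that the preservation of the two weakly defined trace conditions under $\norm{\cdot}_\sa$-limits.
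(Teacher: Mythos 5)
Your proposal is correct and follows essentially the same route as the paper: density via $C_0^\infty(\Omega;\C^2)\subset\dom\sa$, and closedness by showing that $L^2$-limits of Cauchy sequences retain square-integrable divergence and vorticity and satisfy the weak trace conditions, the latter resting on the continuity of the normal and tangential trace maps (which the paper realizes concretely through the integration-by-parts identities \eqref{eq:PI} and \eqref{eq:PIomega}), before invoking Proposition \ref{prop:Kato}. Your packaging of $\dom\sa$ as a closed subspace of the complete space $H(\diver;\Omega)\cap H(\omega;\Omega)$ is a cosmetic reorganization of the same argument, not a different method.
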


\begin{proof}
If we prove the mentioned properties of $\sa$, the statements about the operator $A$ follow from Proposition~\ref{prop:Kato}. First, $\dom \sa$ is dense in $L^2 (\Omega; \C^2)$ as $C_0^\infty (\Omega; \C^2) \subset \dom \sa$ and $C_0^\infty (\Omega; \C^2)$ is dense in $L^2 (\Omega; \C^2)$. To see that $\sa$ is closed, we need to show that the space $\cH_{\ga}$ is complete. For this let $(u^n)_n$ be a Cauchy sequence in $\cH_{\ga}$, that is,
\begin{equation*}
 \int_{\Omega} \big( \abs{\diver{u^n}-\diver{u^m}}^2 + \abs{\omega(u^n)-\omega(u^m)}^2 + \abs{u^n-u^m}^2 \big) \to 0
\end{equation*}
as $n,m \to \infty$. Then there exist $f, g \in L^2(\om)$ and $u \in L^2(\om, \mathbb{C}^2)$ such that
\begin{equation*}
 \diver{u^n} \to f, \quad \omega(u^n) \to g, \quad u^n \to u
\end{equation*}
in $L^2(\om)$ or $L^2(\om, \mathbb{C}^2)$, respectively. For all $\psi \in C_0^\infty(\om)$ it follows by integration by parts \eqref{eq:PI} that
\begin{equation*}
 \int_{\Omega} \diver{u}\, \overline{\psi} = - \int_{\Omega} u \cdot \nabla \psi = - \lim_{n \to \infty} \int_{\Omega} u^n \cdot \nabla \psi = \lim_{n \to \infty} \int_{\Omega} \diver{u^n} \overline{\psi} = \int_{\Omega} f \, \overline{\psi},
\end{equation*}
that is, $\diver{u}=f \in L^2(\om)$. Analogously, applying \eqref{eq:PIomega} we get
\begin{equation*}
 \int_{\Omega} \omega(u)\, \overline{\psi} = - \int_{\Omega} u \cdot \nabla^{\perp} \psi = - \lim_{n \to \infty} \int_{\Omega} u^n \cdot \nabla^{\perp} \psi = \lim_{n \to \infty} \int_{\Omega} \omega(u^n) \overline{\psi} = \int_{\Omega} g\, \overline{\psi}
\end{equation*}
for all $\psi \in C_0^\infty (\Omega)$, from which $\omega(u)=g \in L^2(\om)$ follows. As for the boundary conditions, we have for all $\psi \in \hmixc$
\begin{align*}
 (u|_{\bnd} \cdot \nu, \psi |_{\partial \Omega})_{\bnd} & = \int_{\Omega} u \cdot \nabla \psi + \int_{\Omega} \diver{u} \, \overline{\psi} = \lim_{n \to \infty} \bigg( \int_{\Omega} u^n \cdot \nabla \psi + \int_{\Omega} \diver{u^n} \overline{\psi} \bigg) \\
 & = \lim_{n \to \infty} ({u^n}|_{\bnd} \cdot \nu, \psi |_{\partial \Omega})_{\bnd} = 0,
\end{align*}
implying that $u|_{\Gamma} \cdot \nu= 0$. Furthermore, for all $\psi \in \hmix$ we have
\begin{align*}
 (u|_{\bnd} \cdot \tau, \psi |_{\partial \Omega})_{\bnd} & = \int_{\Omega} u \cdot \nabla^{\perp} \psi + \int_{\Omega} \omega(u) \, \overline{\psi} = \lim_{n \to \infty} \bigg(\int_{\Omega} u^n \cdot \nabla^{\perp} \psi + \int_{\Omega} \omega(u^n) \overline{\psi} \bigg) \\
 & = \lim_{n \to \infty} (u^n |_{\bnd} \cdot \tau, \psi |_{\partial \Omega})_{\bnd} = 0 
\end{align*}
implying that $u|_{\Gamma^c} \cdot \tau = 0$. Therefore, $u \in \cH_\ga$, and $\norm{u^n-u}_\sa \to 0$ as $n \to \infty$. Thus, $\cH_{\ga}$ is complete and, thus, $\sa$ is closed.
\end{proof}

\begin{remark}\label{rem:action}
The self-adjoint operator $A$ defined in Proposition \ref{prop:FormToOpA} acts componentwise as the negative Laplacian, since for all $v \in C_0^\infty (\Omega; \C^2)$ and $u \in \dom A$,
\begin{align*}
 \int_\Omega A u \cdot v = \sa [u, v] = - \int_\Omega \left( \nabla \diver u \cdot v + \nabla^\perp \omega (u) \cdot v \right)
\end{align*}
by the integration by parts formulas \eqref{eq:PI} and \eqref{eq:PIomega}, and $\Delta u = \nabla \diver u + \nabla^\perp \omega (u)$.
\end{remark}

Next, we compute the spectrum of $A$. Recall that $-\Delta_\ga$ denotes the Laplacian on $\Omega$ with a Dirichlet boundary condition on $\Gamma$ and a Neumann boundary condition on $\Gamma^c$, and $-\Delta_\gac$ is the operator where the two boundary conditions are interchanged. Let us enumerate the eigenvalues of $- \Delta_{\Gamma^c}$, counted according to their multiplicities, as 
\begin{equation*}
0 < \lambda_1^{\Gamma^c} < \lambda_2^{\Gamma^c} \le \dots
\end{equation*}
and denote by $\psi_1, \psi_2, \dots$ an orthonormal basis of $L^2 (\Omega)$ such that
\begin{equation*}
 - \Delta_{\Gamma^c} \psi_k = \lambda_k^{\Gamma^c} \psi_k, \quad k = 1, 2, \dots.
\end{equation*}
Analogously, let the eigenvalues of $- \Delta_{\Gamma}$ be enumerated as
\begin{equation*}
 0 < \lambda_1^{\Gamma} < \lambda_2^{\Gamma} \le \dots
\end{equation*}
and let $\phi_1, \phi_2, \dots$ be an orthonormal basis of $L^2 (\Omega)$ such that
\begin{equation*}
- \Delta_{\Gamma} \phi_k = \lambda_k^{\Gamma} \phi_k, \quad k = 1, 2, \dots.
\end{equation*}

The following proposition relates the eigenvalues of the aforementioned mixed Laplacians to the spectrum of $A$. For this it is useful to recall the decomposition
\begin{equation*}
 L^2(\Omega; \mathbb{C}^2) = \nabla \hmixc \oplus \nabla^{\perp} \hmix \oplus H_c,
\end{equation*}
see \eqref{eq:Helmholtz} above. 

\begin{proposition}
\label{prop:TranslateEV}
Assume that Hypothesis \ref{hyp1} is satisfied. Let $A$ be the self-adjoint operator in $L^2 (\Omega; \C^2)$ associated with the sesquilinear form $\sa$ in Proposition \ref{prop:FormToOpA}. Then the following hold.
\begin{enumerate}
 \item For each $k \geq 1$, $\nabla \psi_k$ is non-trivial, belongs to $\dom A$, and satisfies $A \nabla \psi_k = \lambda_k^{\Gamma^c} \nabla  \psi_k$.  Moreover, the functions $\frac{1}{\sqrt{\lambda_k^{\Gamma^c}}} \nabla \psi_k$ form an orthonormal basis of $\nabla H^1_{0, \Gamma^c} (\Omega)$.
 \item For each $k \geq 1$, $\nabla^\perp \phi_k$ is non-trivial, belongs to $\dom A$, and satisfies $A \nabla^\perp \phi_k = \lambda_k^{\Gamma} \nabla^\perp \phi_k$.  Moreover, the functions $\frac{1}{\sqrt{\lambda_k^{\Gamma}}} \nabla^\perp \phi_k$ form an orthonormal basis of $\nabla^\perp H_{0, \Gamma} ^1 (\Omega)$.
 \item $\ker A = H_c$.
\end{enumerate}
In particular the spectrum of $A$ coincides with the union of the eigenvalues of $-\Delta_{\Gamma}$ and $-\Delta_{\Gamma^c}$, including multiplicities, and the eigenvalue $0$ with multiplicity $\dim H_c$.
\end{proposition}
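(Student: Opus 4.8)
The plan is to establish parts (i)--(iii) independently and then assemble the spectral statement from the orthogonal Helmholtz decomposition \eqref{eq:Helmholtz}.

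For part (i), I would first verify that $\nabla\psi_k\in\dom\sa$. Since $\psi_k\in H^1(\Omega)$ with $\Delta\psi_k=-\lambda_k^{\Gamma^c}\psi_k\in L^2(\Omega)$, we have $\diver\nabla\psi_k=\Delta\psi_k\in L^2(\Omega)$ and $\omega(\nabla\psi_k)=0$, so $\nabla\psi_k\in H(\diver;\Omega)\cap H(\omega;\Omega)$. The Neumann condition of $-\Delta_{\Gamma^c}$ on $\Gamma$ gives $\nabla\psi_k|_\Gamma\cdot\nu=\partial_\nu\psi_k|_\Gamma=0$, while $\psi_k\in\hmixc$ together with Lemma \ref{lem:TangentialDerivative} yields $\nabla\psi_k|_{\Gamma^c}\cdot\tau=\partial_\tau\psi_k|_{\Gamma^c}=0$; hence $\nabla\psi_k\in\dom\sa$. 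To identify it as an eigenvector I would use the characterization of $\dom A$ in Proposition \ref{prop:FormToOpA}: for $v\in\dom\sa$,
\[
 \sa[\nabla\psi_k, v] = \int_\Omega \diver\nabla\psi_k\,\overline{\diver v} = \int_\Omega \Delta\psi_k\,\overline{\diver v} = -\lambda_k^{\Gamma^c}\int_\Omega\psi_k\,\overline{\diver v},
\]
and an application of the integration by parts formula \eqref{eq:PI} turns the right-hand side into $\lambda_k^{\Gamma^c}\int_\Omega\nabla\psi_k\cdot v$. The crucial point is that the boundary term $(v|_{\bnd}\cdot\nu,\psi_k|_{\bnd})_{\bnd}$ produced by \eqref{eq:PI} vanishes: since $v|_\Gamma\cdot\nu=0$ and $\psi_k\in\hmixc\subset H^1_{0,\bnd\setminus\overline\Gamma}(\Omega)$, this is immediate from the very definition of the weak vanishing of the normal trace on $\Gamma$. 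Thus $\nabla\psi_k\in\dom A$ with $A\nabla\psi_k=\lambda_k^{\Gamma^c}\nabla\psi_k$, and $\nabla\psi_k\neq 0$ because $\psi_k$ vanishes on $\Gamma^c$ but is not identically zero. Part (ii) is entirely analogous, working with $\nabla^\perp\phi_k$: here $\diver\nabla^\perp\phi_k=0$ and $\omega(\nabla^\perp\phi_k)=-\Delta\phi_k\in L^2(\Omega)$, the boundary conditions of $-\Delta_\Gamma$ together with Lemma \ref{lem:TangentialDerivative} give $\nabla^\perp\phi_k|_\Gamma\cdot\nu=-\partial_\tau\phi_k|_\Gamma=0$ and $\nabla^\perp\phi_k|_{\Gamma^c}\cdot\tau=\partial_\nu\phi_k|_{\Gamma^c}=0$, and the eigenvalue equation follows from \eqref{eq:PIomega} with a boundary term that vanishes by the same reasoning.

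For the orthonormal basis assertions I would observe that the map $\psi\mapsto\nabla\psi$ is an isometric isomorphism from $\hmixc$, equipped with the Dirichlet inner product $\int_\Omega\nabla\psi\cdot\overline{\nabla\chi}$ (an equivalent norm by the Poincaré inequality, valid since $\Gamma^c\neq\emptyset$), onto $\nabla\hmixc$ with the $L^2$-inner product. Relative to the Dirichlet inner product the vectors $\frac{1}{\sqrt{\lambda_k^{\Gamma^c}}}\psi_k$ are orthonormal, because $\int_\Omega\nabla\psi_k\cdot\overline{\nabla\psi_l}=\lambda_k^{\Gamma^c}\int_\Omega\psi_k\overline{\psi_l}=\lambda_k^{\Gamma^c}\delta_{kl}$, and they are complete since the eigenvectors of $-\Delta_{\Gamma^c}$ form a core of its form and are therefore dense in $\hmixc$ in the form norm by Lemma \ref{lem:core}. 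Transporting this orthonormal basis through the isometry shows that the $\frac{1}{\sqrt{\lambda_k^{\Gamma^c}}}\nabla\psi_k$ form an orthonormal basis of $\nabla\hmixc$; the same argument with $\nabla^\perp$ and $\phi_k$ proves the corresponding statement for $\nabla^\perp\hmix$ in part (ii).

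Part (iii) is the quickest: since $A$ is non-negative, $u\in\ker A$ if and only if $u\in\dom\sa$ and $\sa[u]=\int_\Omega(|\diver u|^2+|\omega(u)|^2)=0$, that is, if and only if $\diver u=\omega(u)=0$ together with the boundary conditions $u|_\Gamma\cdot\nu=0$ and $u|_{\Gamma^c}\cdot\tau=0$ built into $\dom\sa$; this is precisely the defining condition of $H_c$, so $\ker A=H_c$. Finally, for the spectral statement I would invoke the orthogonal Helmholtz decomposition \eqref{eq:Helmholtz}: by (i), (ii) and (iii) each of the three summands is $A$-invariant, and within them we have exhibited orthonormal bases of eigenvectors with eigenvalues $\lambda_k^{\Gamma^c}$, $\lambda_k^\Gamma$, and $0$, respectively. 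Their union is then a complete orthonormal system of eigenvectors of $A$, so $A$ is diagonalized and its spectrum equals the union of the eigenvalues of $-\Delta_\Gamma$ and $-\Delta_{\Gamma^c}$, with multiplicities, together with the eigenvalue $0$ of multiplicity $\dim H_c$. I expect the main obstacle to be the careful justification that the boundary terms coming from \eqref{eq:PI} and \eqref{eq:PIomega} genuinely vanish, which rests on pairing the weakly defined traces of $v$ against the Dirichlet data of $\psi_k$ and $\phi_k$ through the precise definition of vanishing on $\Sigma$; the completeness claims are then a routine transfer of each mixed Laplacian's spectral basis through the (perp-)gradient isometry via Lemma \ref{lem:core}.
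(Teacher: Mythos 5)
Your proposal is correct and follows essentially the same route as the paper: verify membership of $\nabla\psi_k$ and $\nabla^\perp\phi_k$ in $\dom\sa$, use the weak characterization of $\dom A$ with the boundary pairing killed by $v|_\Gamma\cdot\nu=0$ against $\psi_k|_{\Gamma^c}=0$ (and dually), identify $\ker A$ with $H_c$ via non-negativity of $\sa$, and assemble the spectrum from the Helmholtz decomposition; your completeness argument via the gradient isometry and Lemma \ref{lem:core} is a harmless variant of the paper's direct orthogonality argument. The only blemish is the sign slip $\omega(\nabla^\perp\phi_k)=\Delta\phi_k=-\lambda_k^\Gamma\phi_k$ (not $-\Delta\phi_k$), which does not affect the argument.
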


\begin{proof}
(i) Let first $\psi$ be any eigenfunction of $-\Delta_{\Gamma^c}$ corresponding to an eigenvalue $\lambda$ and consider its gradient $\nabla \psi$, which is non-trivial as $\psi$ is non-constant, and belongs to $L^2(\Omega; \mathbb{C}^2)$. The identities
\begin{equation*}
 \diver{\nabla \psi} = \Delta \psi = -\lambda \psi 
\end{equation*}
and 
\begin{equation*}
 \omega(\nabla \psi) = \partial_1 \partial_2 \psi -\partial_2 \partial_1 \psi = 0
\end{equation*}
both hold on $\Omega$ in the distributional sense, so that $\diver{\nabla \psi},\omega(\nabla \psi) \in L^2(\om)$. The Neumann boundary condition to which $\psi$ is subject on $\Gamma$ yields
\begin{equation*}
 \nabla \psi |_\Gamma \cdot \nu = \partial_\nu \psi |_\Gamma = 0,
\end{equation*}
while the Dirichlet boundary condition on $\Gamma^c$ yields
\begin{equation*}
 \nabla \psi |_{\Gamma^c} \cdot \tau = \partial_\tau \psi |_{\Gamma^c} = 0;
\end{equation*}
cf.\ Lemma \ref{lem:TangentialDerivative}. Thus $\nabla \psi \in \dom \sa$ and, furthermore, for all $v \in \dom \sa$ we have
\begin{align*}
\begin{split}
 \sa[\nabla \psi, v] & = \int_{\om} \Delta \psi \, \overline{\diver{v}} = - \lambda \int_{\om} \psi \, \overline{\diver{v}} = \lambda \int_{\om} \nabla \psi \cdot v,
\end{split}
\end{align*}
where we used $v|_{\Gamma} \cdot \nu = 0$ and $\psi |_{\Gamma^c} = 0$. Proposition \ref{prop:FormToOpA} then yields $\nabla \psi \in \dom A$ and $A \nabla \psi = \lambda \nabla \psi$. Consequently, for each $k$, $\lambda_k^{\Gamma^c}$ is an eigenvalue of $A$ with corresponding eigenfunction $\nabla \psi_k$.

To show the orthonormal basis property, by Green's identity, for all $j, k$ we have
\begin{align*}
 \int_\Omega \nabla \psi_j \cdot \nabla \psi_k & = - \int_\Omega \Delta \psi_j \overline{\psi_k}  + \left( \partial_\nu \psi_j |_{\partial \Omega}, \psi_k \right)_{\partial \Omega} = \lambda_j^{\Gamma^c} \int_\Omega \psi_j \overline{\psi_k},
\end{align*}
where the boundary term vanishes as $\psi_k |_{\Gamma^c} = 0$ and $\partial_\nu \psi_j |_{\Gamma} = 0$. The right-hand side integral equals 0 if $k \neq j$ and $\lambda_j^{\Gamma^c}$ if $k=j$ as the eigenfunctions $\psi_k$ form an orthonormal basis in $L^2(\Omega)$. Therefore the functions $\frac{1}{\sqrt{\lambda_k^{\Gamma^c}}} \nabla \psi_k$ form an orthonormal system in $\nabla \hmixc$. To prove that this orthonormal system is an orthonormal basis, assume that $\psi \in H_{0, \Gamma^c}^1 (\Omega)$ is such that $\nabla \psi$ is orthogonal to all the $\nabla \psi_k$ in $L^2 (\Omega; \C^2)$. Then for all $k$ we have
\begin{align*}
 0 = \int_\Omega \nabla \psi_k \cdot \nabla \psi = - \int_\Omega \Delta \psi_k \overline{\psi} + \left( \partial_\nu \psi_k |_{\partial \Omega}, \overline{\psi} \right)_{\partial \Omega} = \lambda_k^{\Gamma^c} \int_\Omega \psi_k \overline{\psi},
\end{align*}
where the boundary term vanishes as $\psi |_{\Gamma^c} = 0$ and $\partial_\nu \psi_k |_\Gamma = 0$. Hence $\psi$ is orthogonal to all eigenfunctions $\psi_k$ of $-\Delta_{\Gamma^c}$, from which $\psi = 0$ follows, and the proof of (i) is complete. 

(ii) Let now $\phi$ be an eigenfunction of $-\Delta_{\Gamma}$ corresponding to an eigenvalue $\lambda$ and consider its perpendicular gradient $\nabla^{\perp} \phi \in L^2(\Omega; \mathbb{C}^2)$, which is non-trivial as $\phi$ is non-constant. Then 
\begin{equation*}
 \diver{\nabla^{\perp} \phi} = -\partial_1 \partial_2 \phi +\partial_2 \partial_1 \phi = 0
\end{equation*}
and
\begin{equation*}
 \omega(\nabla^{\perp} \phi) = \Delta \phi = - \lambda \phi
\end{equation*}
on $\Omega$, giving $\diver{\nabla^\perp \phi},$ $\omega(\nabla^\perp \phi) \in L^2(\om)$. Since $\phi$ is constantly zero along $\Gamma$ we have
\begin{equation*}
 \nabla^{\perp} \phi |_\Gamma \cdot \nu = - \nabla \phi |_\Gamma \cdot \tau = - \partial_\tau \phi |_{\Gamma} = 0,
\end{equation*}
where we used $\nu = -\tau^\perp$ and Lemma \ref{lem:TangentialDerivative}. On $\Gamma^c$, $\phi$ satisfies a Neumann boundary condition and thus
\begin{equation*}
\nabla^{\perp} \phi |_{\Gamma^c} \cdot {\tau} = \nabla \phi |_{\Gamma^c} \cdot {\nu} = \partial_\nu \phi |_{\Gamma^c} = 0.
\end{equation*}
We have shown $\nabla^{\perp} \phi \in \dom \sa$. Furthermore, for each $v \in \dom \sa$ we have
\begin{align*}
\begin{split}
 \sa \left[\nabla^{\perp} \phi, v \right] & = \int_{\om} \Delta \phi \, \overline{\omega(v)} = - \lambda \int_{\om} \phi \, \overline{\omega(v)} = \lambda \int_{\om} \nabla^{\perp} \phi \cdot v
\end{split}
\end{align*}
where we used $\phi |_\Gamma = 0$ and $v|_{\Gamma^c} \cdot \tau = 0$. Again this implies that $\nabla^{\perp} \phi \in \dom{A}$ and $A \nabla^{\perp} \phi = \lambda \nabla^{\perp} \phi$. Hence, for each $k$, $\nabla^\perp \phi_k$ is an eigenfunction of $A$ corresponding to the eigenvalue $\lambda_k^\Gamma$. An argument entirely analogous to the one in (i) proves that the functions $\frac{1}{\sqrt{\lambda^\Gamma_k}}\nabla^\perp \phi_k$ form an orthonormal basis in $\nabla^\perp \hmix$.

(iii) As the form $\sa$ is non-negative, each $u \in \dom \sa$ satisfies
\begin{equation*}
 \sa[u] = \int_{\Omega} \left( |\diver{u}|^2 + |\omega(u)|^2 \right) \geq 0,
\end{equation*}
with equality if and only if $\diver u = \omega (u) = 0$. Hence $\ker A = H_c$.

For the remaining assertion on the spectrum of $A$, observe that in (i)--(iii) we have found a family of eigenfunctions of $A$ which, according to the decomposition \eqref{eq:Helmholtz}, span all of $L^2 (\Omega; \C^2)$. As $A$ is self-adjoint, this implies that we have found the entire spectrum of $A$ and it is of the form stated in the proposition.
\end{proof}

As a consequence of the previous proposition we get the following variational principle for the eigenvalues of the mixed Laplacians $-\Delta_\ga$ and $-\Delta_\gac$. Recall that the space $\cH_\Gamma$ is given by
\begin{align*}
 \cH_\Gamma = \big\{ u \in L^2(\Omega; \C^2) : \diver{u}, \omega(u) \in L^2(\Omega), u |_{\Gamma} \cdot \nu = 0, \, u |_{\Gamma^c} \cdot \tau = 0 \big\}.
\end{align*}

\begin{theorem}
\label{thm:VariationalPrincipleOneForm}
Assume that Hypothesis \ref{hyp1} is satisfied. Denote by
\begin{align*}
 \eta_1 \leq \eta_2 \leq \dots
\end{align*}
the positive eigenvalues of $A$, i.e.\ the union of the eigenvalues of $- \Delta_{\ga}$ and $- \Delta_{\gac}$, counted according to their multiplicities; see Proposition \ref{prop:TranslateEV}. Then
\begin{align*}
\begin{split}
 \eta_j & = \min_{\substack{U \subset \cH_{\Gamma} \\ U \perp H_c \\ \dim U = j}} \max_{\substack{u \in U \\ u \neq 0}} \frac{\int_{\Omega} \left( \abs{\diver{u}}^2 + \abs{\omega(u)}^2 \right)}{\int_\Omega \abs{u}^2}.
\end{split}
\end{align*}
In particular, the first positive eigenvalue $\eta_1$ of $A$ is given by
\begin{align}
\label{eq:minMaxEv1OneForm1}
\begin{split}
 \eta_1 & = \min_{\substack{u \in \cH_{\Gamma} \\ u \perp H_c \\ u \neq 0}} \frac{\int_{\Omega} \left( \abs{\diver{u}}^2 + \abs{\omega(u)}^2 \right)}{\int_\Omega \abs{u}^2}.
\end{split}
\end{align}
Moreover, $u \in \cH_\Gamma$ with $u \perp H_c$ and $u \neq 0$ is a minimizer of \eqref{eq:minMaxEv1OneForm1} if and only if $u = \nabla \psi + \nabla^\perp \phi$ for some $\psi \in \ker (-\Delta_\gac - \eta_1)$ and $\phi \in \ker (-\Delta_\ga - \eta_1)$.
\end{theorem}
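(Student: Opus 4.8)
The plan is to derive all three assertions from the abstract min--max principle of Proposition~\ref{prop:Kato}(ii), applied not to $A$ directly but to the part of $A$ acting in the orthogonal complement of its kernel. By Proposition~\ref{prop:TranslateEV}(iii) we have $H_c = \ker A$, and since $A$ is self-adjoint this kernel reduces $A$. Moreover $H_c \subset \dom A$, so for every $f \in H_c$ and $u \in \dom \sa$ one has $\sa[f,u] = \int_\Omega Af \cdot u = 0$; thus $H_c$ is orthogonal to the remainder of the form domain both in $L^2(\Omega;\C^2)$ and with respect to the form inner product. Writing $A_0$ for the part of $A$ in $H_c^\perp$ and $\sa_0$ for the restriction of $\sa$ to $\cH_\Gamma \cap H_c^\perp$, the operator $A_0$ is exactly the self-adjoint operator associated with $\sa_0$, whose form domain is $\{u \in \cH_\Gamma : u \perp H_c\}$ --- the space over which the theorem minimizes --- and $\ker A_0 = \{0\}$.

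First I would record that $\sigma(A_0)$ is purely discrete. Proposition~\ref{prop:TranslateEV}(i)--(ii) produces an orthonormal basis of $H_c^\perp = \nabla \hmixc \oplus \nabla^\perp \hmix$ consisting of eigenvectors of $A$, namely the normalized fields $\nabla \psi_k$ and $\nabla^\perp \phi_k$ with eigenvalues $\lambda_k^{\Gamma^c}$ and $\lambda_k^{\Gamma}$. Enumerated together as $\eta_1 \leq \eta_2 \leq \cdots$, these are positive, of finite multiplicity, and tend to $+\infty$, because the spectra of $-\Delta_\Gamma$ and $-\Delta_{\Gamma^c}$ do. Hence $A_0$ possesses an orthonormal eigenbasis with finite-multiplicity eigenvalues going to infinity, so its resolvent is compact; equivalently, $\cH_\Gamma \cap H_c^\perp$ embeds compactly into $H_c^\perp$, which is precisely the hypothesis required to invoke Proposition~\ref{prop:Kato}(ii).

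Applying Proposition~\ref{prop:Kato}(ii) verbatim to $A_0$ and $\sa_0$ then yields the result. Since $\sa_0[u] = \int_\Omega(\abs{\diver u}^2 + \abs{\omega(u)}^2)$ and the form domain is $\{u \in \cH_\Gamma : u \perp H_c\}$, the min--max identity for the $\eta_j$ follows at once, and the case $j=1$ is formula~\eqref{eq:minMaxEv1OneForm1}. For the minimizers I would use the final assertion of Proposition~\ref{prop:Kato}(ii): a nonzero $u$ in the form domain minimizes the Rayleigh quotient if and only if it is an eigenvector of $A_0$ for its lowest eigenvalue $\eta_1$, equivalently an eigenvector of $A$ for $\eta_1$ (every eigenvector for a positive eigenvalue lies automatically in $H_c^\perp$). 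It then remains to identify this eigenspace: by Proposition~\ref{prop:TranslateEV} it is spanned by those $\nabla \psi_k$ with $\lambda_k^{\Gamma^c} = \eta_1$ and those $\nabla^\perp \phi_k$ with $\lambda_k^{\Gamma} = \eta_1$, i.e. by $\nabla \ker(-\Delta_{\Gamma^c} - \eta_1)$ and $\nabla^\perp \ker(-\Delta_\Gamma - \eta_1)$, and these are orthogonal summands in the Helmholtz decomposition~\eqref{eq:Helmholtz}. Thus $u$ is a minimizer if and only if $u = \nabla \psi + \nabla^\perp \phi$ with $\psi \in \ker(-\Delta_{\Gamma^c} - \eta_1)$ and $\phi \in \ker(-\Delta_\Gamma - \eta_1)$, as claimed.

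I expect the only genuine obstacle to be the initial reduction to $H_c^\perp$: one must check carefully that $H_c$ reduces $A$, that it is form-orthogonal to its complement so that $A_0$ is genuinely the operator of the restricted form $\sa_0$ with the stated form domain, and that the already-known eigenvalue list is enough to force the compact embedding needed for Proposition~\ref{prop:Kato}(ii). Once this is in place, everything else is a transcription of the abstract min--max principle together with the routine identification of the $\eta_1$-eigenspace afforded by Proposition~\ref{prop:TranslateEV}.
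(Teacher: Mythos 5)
Your proof is correct and follows essentially the same route as the paper: the paper's own argument simply invokes Proposition \ref{prop:TranslateEV} for the identification of the positive spectrum and the eigenspaces $\ker(A-\eta_j)=\nabla\ker(-\Delta_{\Gamma^c}-\eta_j)\oplus\nabla^\perp\ker(-\Delta_\Gamma-\eta_j)$, and then applies Proposition \ref{prop:Kato}(ii). Your additional care in reducing to the part $A_0$ of $A$ in $H_c^\perp$ and in verifying the compact-embedding hypothesis via the eigenbasis with finite multiplicities tending to infinity fills in details the paper leaves implicit, and is sound.
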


\begin{proof}
By Proposition \ref{prop:TranslateEV}, the positive eigenvalues $\eta_j$ of $A$ are precisely the eigenvalues of the operators $- \Delta_{\Gamma^c}$ and $- \Delta_\Gamma$, taking into account multiplicities, and 
\begin{align}\label{eq:eigenspace}
 \ker (A - \eta_j) = \nabla \ker (-\Delta_\gac - \eta_j) \oplus \nabla^\perp \ker (-\Delta_\ga - \eta_j)
\end{align}
holds for all $j$. Therefore all statements of the theorem follow from Proposition~\ref{prop:Kato} (ii) and the definition of the sesquilinear form $\sa$ in \eqref{eq:aForm}--\eqref{eq:aFormDomain}.
\end{proof}

Next, we specify assumptions under which the kernel of $A$ is empty and, hence, the spectrum of $A$ consists exclusively of the union of the spectra of the two mixed Laplacians.

\begin{proposition}
\label{prop:ThirdSpaceTrivial}
Assume that Hypothesis \ref{hyp1} is satisfied. 
If, in addition, $\Omega$ is simply connected and  $\Gamma$ is connected, then
\begin{equation*}
 \ker{A} = \{ 0 \}.
\end{equation*}
In particular,
\begin{equation}\label{eq:HelmholtzSimplified}
 L^2(\Omega; \mathbb{C}^2) = \nabla \hmixc \oplus \nabla^{\perp} \hmix.
\end{equation}
\end{proposition}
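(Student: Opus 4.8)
The plan is to combine the identity $\ker A = H_c$ from Proposition~\ref{prop:TranslateEV}(iii) with the fact that on a simply connected domain a vorticity-free field is a gradient, and then to exploit the connectivity of $\Gamma$. Thus it suffices to prove $H_c = \{0\}$, after which the simplified decomposition \eqref{eq:HelmholtzSimplified} is immediate from \eqref{eq:Helmholtz}. So I would fix $u \in H_c$, i.e.\ $u \in L^2(\Omega;\C^2)$ with $\diver u = \omega(u) = 0$, $u|_\Gamma\cdot\nu = 0$ and $u|_{\Gamma^c}\cdot\tau = 0$. Since $\omega(u) = 0$ distributionally and $\Omega$ is simply connected, there is a scalar potential $f \in H^1(\Omega)$ with $u = \nabla f$; this is the standard fact that a curl-free $L^2$ vector field on a simply connected domain is a gradient (cf.\ \cite[Chapter~I]{GR} and the proof of the Helmholtz decomposition in Appendix~\ref{sec:appendixHH}). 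Because $\diver u = \Delta f = 0$, the potential $f$ is harmonic.

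Next I would translate the two boundary conditions defining $H_c$ into conditions on $f$. The condition $u|_\Gamma\cdot\nu = 0$ reads $\partial_\nu f|_\Gamma = 0$, a Neumann condition on $\Gamma$, while $u|_{\Gamma^c}\cdot\tau = 0$ reads $\partial_\tau f|_{\Gamma^c} = 0$, the vanishing of the weak tangential derivative of $f$ along $\Gamma^c$. Here the topological hypotheses enter: since $\Omega$ is a bounded, simply connected, planar Lipschitz domain, $\partial\Omega$ is homeomorphic to a single circle, and since $\Gamma$ is connected and relatively open, $\overline{\Gamma}$ is a closed arc whose complement $\Gamma^c = \partial\Omega \setminus \overline{\Gamma}$ is again a single, connected arc. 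A function whose tangential derivative vanishes on the connected set $\Gamma^c$ is constant there, so there is a constant $c \in \C$ with $f - c \in \hmixc$; equivalently $g := f - c$ has trace zero on $\Gamma^c$, gradient $\nabla g = u$, and still satisfies $\partial_\nu g|_\Gamma = 0$.

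It then remains to run an energy argument. Applying the integration by parts formula \eqref{eq:PI} to $u = \nabla g \in H(\diver;\Omega)$ and the test function $\phi = g \in H^1(\Omega)$, and using $\diver u = \Delta g = 0$, I obtain
\begin{equation*}
 \int_\Omega |\nabla g|^2 = \big(\partial_\nu g|_{\partial\Omega}, g|_{\partial\Omega}\big)_{\partial\Omega}.
\end{equation*}
The right-hand side vanishes: by the very definition of $\partial_\nu g|_\Gamma = 0$, the pairing $(\partial_\nu g, \phi|_{\partial\Omega})_{\partial\Omega}$ is zero for every $\phi \in \hmixc$, and $g$ itself is such a test function. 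Hence $\nabla g = 0$ almost everywhere, so $u = \nabla g = 0$, proving $H_c = \{0\}$ and therefore $\ker A = \{0\}$.

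I expect the main obstacle to be the middle step, namely the rigorous passage from the weakly defined condition $\partial_\tau f|_{\Gamma^c} = 0$ to the membership $f - c \in \hmixc$. This is where connectivity of $\Gamma$, hence of $\Gamma^c$, is indispensable: on a disconnected $\Gamma^c$ the potential could take different constant values on different components and $H_c$ would generally be nontrivial. Making the argument precise requires relating the weak tangential trace $\nabla f|_{\partial\Omega}\cdot\tau$ to the arclength derivative of the boundary trace $f|_{\partial\Omega} \in H^{1/2}(\partial\Omega)$ under a bi-Lipschitz parametrization of $\partial\Omega$, and then invoking that an $H^{1/2}$ function with vanishing weak derivative on a connected arc is constant there; alternatively, one can argue by density in the spirit of Lemma~\ref{lem:TangentialDerivative}. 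The existence of the potential $f$ in the first step is the only other nontrivial input, but it is standard and already underlies the Helmholtz decomposition \eqref{eq:Helmholtz}.
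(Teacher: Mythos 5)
Your proposal is correct and follows essentially the same route as the paper's proof: reduce to $H_c=\{0\}$ via Proposition \ref{prop:TranslateEV}(iii), write $u=\nabla f$ using simple connectedness, use connectedness of $\Gamma^c$ to normalize $f\in\hmixc$, and kill $\int_\Omega|u|^2$ by integration by parts \eqref{eq:PI} together with $\diver u=0$ and $u|_\Gamma\cdot\nu=0$. The step you flag as the main obstacle (constancy of the trace on the connected arc $\Gamma^c$) is handled in the paper exactly as you suggest, by invoking $\partial_\tau f|_{\Gamma^c}=0$ on the single connected component.
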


\begin{proof}
By Proposition \ref{prop:TranslateEV} (iii) we have $\ker{A} = H_c$. Let $u \in H_c$, i.e.\ $u \in L^2 (\Omega; \C^2)$, $\diver{u}=\omega(u)=0$ in $\Omega$, $u|_\Gamma \cdot \nu = 0$ and $u|_{\Gamma^c} \cdot \tau = 0$. Since $\om$ is simply connected, $\omega(u)=0$ implies that there exists $f \in H^1 (\Omega)$ such that $u = \nabla f$, see \cite[Chapter I, Theorem 2.9]{GR}. As
\begin{align*}
 \partial_\tau f |_{\Gamma^c} = u |_{\Gamma^c} \cdot \tau = 0
\end{align*}
and $\Gamma^c$ is connected due to the assumptions of the proposition, there exists $c \in \mathbb{C}$ such that $f |_{\Gamma^c} = c$ identically. However, we may replace $f$ by $f - c$ and therefore assume without loss of generality that $f \in H_{0, \Gamma^c}^1 (\Omega)$. Using $\diver u = 0$ in $\Omega$ and $u |_\Gamma \cdot \nu = 0$ we get
\begin{align*}
\begin{split}
 \int_{\om} \abs{u}^2 & = \int_{\om} u \cdot \nabla f = - \int_{\om} \diver{u} \, \overline{f} + \left( u |_{\partial \Omega} \cdot \nu, f |_{\partial \Omega} \right)_{\partial \Omega} = 0.
\end{split}
\end{align*}
Hence, $u=0$ identically on $\om$, that is, $H_c = \{0\}$. The equality \eqref{eq:HelmholtzSimplified} now follows immediately from \eqref{eq:Helmholtz}.
\end{proof}

\begin{remark}
Note that under the assumptions of Proposition \ref{prop:ThirdSpaceTrivial} the form $\sa$ is even positive, not only non-negative.
\end{remark}

We point out that, in contrast to the case of pure Neumann respectively pure Dirichlet boundary conditions considered in \cite{R23p}, $\Omega$ being simply connected is not sufficient for $\ker A$ to be trivial, as the following simple example shows.

\begin{example}
\label{ex:square}
Consider the case $\Omega = (0,1) \times (0,1)$ and choose $\Gamma$ to be the union of the two opposite vertical sides,
\begin{align*}
 \Gamma = \left\{(0, y) : y \in (0,1) \right\} \cup \left\{(1, y) : y \in (0,1) \right\},
\end{align*}
and $\Gamma^c$ to be the union of the two horizontal sides; in particular, neither $\Gamma$ nor $\Gamma^c$ is connected. Then both the normal vector on $\Gamma$ and the tangent vector on $\Gamma^c$ are constantly equal to $\pm (1, 0)^\top$ on each connected component of the respective part of $\partial \Omega$. In particular, the constant vector field $u = (0, 1)^\top$ satisfies $u |_\Gamma \cdot \nu = 0$ and $u |_{\Gamma^c} \cdot \tau = 0$, as well as $\diver u = \omega (u) = 0$ in $\Omega$. Hence we have found a non-trivial element $u \in H_c$. The question of the dimension of $H_c$ is studied further in 
Proposition \ref{prop:dimHC} in Appendix \ref{sec:appendixHH}.
\end{example}

Theorem \ref{thm:VariationalPrincipleOneForm} together with Proposition \ref{prop:ThirdSpaceTrivial} immediately yields the following.

\begin{theorem}
\label{thm:VariationalPrincipleOneForm2}
Assume that Hypothesis \ref{hyp1} is satisfied. In addition, assume that $\Omega$ is simply connected and that $\Gamma$ is connected. Denote by
\begin{align*}
 \eta_1 \leq \eta_2 \leq \dots
\end{align*}
the eigenvalues of $A$, i.e.\ the union of the eigenvalues of $- \Delta_{\ga}$ and $- \Delta_{\gac}$, counted according to their multiplicities. Then
\begin{align*}
\begin{split}
 \eta_j & = \min_{\substack{U \subset \cH_{\Gamma} \\ \dim U = j}} \max_{\substack{u \in U \\ u \neq 0}} \frac{\int_{\Omega} \left( \abs{\diver{u}}^2 + \abs{\omega(u)}^2 \right)}{\int_\Omega \abs{u}^2}.
\end{split}
\end{align*}
In particular, the first eigenvalue $\eta_1$ of $A$ is given by
\begin{align}
\label{eq:minMaxEv1OneForm}
\begin{split}
 \eta_1 & = \min_{\substack{u \in \cH_{\Gamma} \\  u \neq 0}} \frac{\int_{\Omega} \left( \abs{\diver{u}}^2 + \abs{\omega(u)}^2 \right)}{\int_\Omega \abs{u}^2}.
\end{split}
\end{align}
Moreover, $u \in \cH_\Gamma$ with $u \neq 0$ is a minimizer of \eqref{eq:minMaxEv1OneForm} if and only if $u = \nabla \psi + \nabla^\perp \phi$ for some $\psi \in \ker (-\Delta_\gac - \eta_1)$ and $\phi \in \ker (-\Delta_\ga - \eta_1)$.
\end{theorem}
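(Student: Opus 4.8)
The plan is to obtain this statement as a direct specialization of Theorem \ref{thm:VariationalPrincipleOneForm}, using the two extra hypotheses precisely to remove the orthogonality constraint that appears there. Under Hypothesis \ref{hyp1} alone, Theorem \ref{thm:VariationalPrincipleOneForm} already expresses each positive eigenvalue $\eta_j$ of $A$ as a min-max over $j$-dimensional subspaces $U \subset \cH_\Gamma$ subject to $U \perp H_c$, together with the characterization of the $\eta_1$-minimizers as $u = \nabla\psi + \nabla^\perp\phi$. The only discrepancies between that formulation and the one to be proved are the side condition $U \perp H_c$ and the restriction to the \emph{positive} part of the spectrum. Both will disappear once we know that $H_c$ is trivial.

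First I would invoke Proposition \ref{prop:ThirdSpaceTrivial}: the additional assumptions that $\Omega$ is simply connected and $\Gamma$ is connected give $\ker A = H_c = \{0\}$. With $H_c$ trivial, the requirement $U \perp H_c$ is met by every subspace $U \subset \cH_\Gamma$, so it may simply be dropped from the minimization, and likewise the condition $u \perp H_c$ in \eqref{eq:minMaxEv1OneForm1} becomes vacuous. At the same time, since $0$ is no longer an eigenvalue of $A$, the ``positive eigenvalues'' of $A$ appearing in Theorem \ref{thm:VariationalPrincipleOneForm} now exhaust \emph{all} eigenvalues of $A$, namely the full union of the eigenvalues of $-\Delta_\Gamma$ and $-\Delta_{\Gamma^c}$ counted with multiplicities. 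Thus the min-max formula for $\eta_j$ and the formula \eqref{eq:minMaxEv1OneForm} for $\eta_1$ follow verbatim, now ranging over all subspaces, respectively all nonzero fields, in $\cH_\Gamma$.

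For the minimizer statement I would observe that the eigenspace identity \eqref{eq:eigenspace}, namely $\ker(A - \eta_1) = \nabla\ker(-\Delta_{\Gamma^c} - \eta_1) \oplus \nabla^\perp \ker(-\Delta_\Gamma - \eta_1)$, is entirely unaffected by the vanishing of $H_c$, and that by Proposition \ref{prop:Kato} (ii) a nonzero $u \in \cH_\Gamma$ minimizes the Rayleigh quotient if and only if it is an eigenvector of $A$ for $\eta_1$. Combining these, the minimizers of \eqref{eq:minMaxEv1OneForm} are exactly the fields $u = \nabla\psi + \nabla^\perp\phi$ with $\psi \in \ker(-\Delta_{\Gamma^c} - \eta_1)$ and $\phi \in \ker(-\Delta_\Gamma - \eta_1)$, as asserted.

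Since every ingredient is already established, I expect no genuine obstacle here: the substance of the theorem is carried entirely by Proposition \ref{prop:ThirdSpaceTrivial}, whose proof uses simple connectedness to write the curl-free field as $\nabla f$ and connectedness of $\Gamma^c$ to normalize $f$ to vanish there. The only point demanding a little care is the bookkeeping of the enumeration, that is, checking that ``positive eigenvalues of $A$'' and ``eigenvalues of $A$'' coincide once the kernel is trivial, so that the indices $\eta_1 \le \eta_2 \le \dots$ in the two theorems refer to the same sequence and no eigenvalue is shifted or omitted.
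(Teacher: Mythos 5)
Your proposal is correct and follows exactly the paper's route: the paper derives this theorem as an immediate consequence of Theorem \ref{thm:VariationalPrincipleOneForm} combined with Proposition \ref{prop:ThirdSpaceTrivial}, which gives $\ker A = H_c = \{0\}$ and thereby removes the orthogonality constraint and makes the positive eigenvalues exhaust the whole spectrum. Your additional care about the enumeration and the minimizer characterization via \eqref{eq:eigenspace} matches what the paper leaves implicit.
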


\subsection{A curvature variational principle for mixed boundary conditions}

In this subsection we prove an alternative representation of the sesquilinear form $\sa$ and, thus, of the variational principle of Theorem \ref{thm:VariationalPrincipleOneForm2}. In order to do this we make the following assumption.

\begin{hypothesis}\label{hyp2}
The set $\Omega \subset \mathbb{R}^2$ is a bounded, simply connected Lipschitz domain whose boundary consists of finitely many $C^\infty$-smooth arcs. Moreover, $\Gamma, \Gamma^c \subset \bnd$ are non-empty, relatively open, connected subsets of the boundary such that 
\begin{align*}
 \Gamma \cap \Gamma^c = \emptyset \quad \text{and} \quad \overline{\Gamma \cup \Gamma^c} = \partial \Omega.
\end{align*}
The interior angles at the two points where $\Gamma$ and $\Gamma^c$ meet are strictly below $\pi/2$, and $\partial \Omega$ has no inward-pointing corners.
\end{hypothesis}

We point out that Proposition \ref{prop:ThirdSpaceTrivial} and Theorem \ref{thm:VariationalPrincipleOneForm2} apply under Hypothesis \ref{hyp2}. 

Let us first establish additional regularity of the space $\dom \sa = \cH_\Gamma$ under the above hypothesis.

\begin{lemma}\label{lem:formDomain}
Assume that Hypothesis \ref{hyp2} is satisfied. Then
\begin{align*}
 \cH_\Gamma = \left\{ u \in H^1 (\Omega; \C^2) : u |_\Gamma \cdot \nu = 0, u |_{\Gamma^c} \cdot \tau = 0 \right\}
\end{align*}
holds.
\end{lemma}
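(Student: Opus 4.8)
The goal is to show that under Hypothesis~\ref{hyp2}, the form domain $\cH_\Gamma$, which by definition \eqref{eq:aFormDomain} consists of $L^2$ vector fields with distributional divergence and vorticity in $L^2$ and satisfying the mixed boundary conditions, actually coincides with the subspace of $H^1(\Omega;\C^2)$ vector fields obeying the same boundary conditions. One inclusion is trivial: if $u \in H^1(\Omega;\C^2)$ then clearly $\diver u, \omega(u) \in L^2(\Omega)$ and the traces $u|_\Gamma \cdot \nu$, $u|_{\Gamma^c} \cdot \tau$ are well defined, so the right-hand side is contained in $\cH_\Gamma$. The substance of the lemma is the reverse inclusion: that every $u \in \cH_\Gamma$ in fact enjoys full $H^1$-regularity.

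The plan is to reduce this $H^1$-regularity statement for vector fields to the $H^2$-regularity of scalar eigenfunctions, which is already available via Proposition~\ref{prop:RegularityAngles}. The natural route is through the Helmholtz-type decomposition. Under Hypothesis~\ref{hyp2} we are in the setting of Proposition~\ref{prop:ThirdSpaceTrivial}, so $H_c = \{0\}$ and the decomposition \eqref{eq:HelmholtzSimplified} gives
\begin{align*}
 L^2(\Omega;\C^2) = \nabla \hmixc \oplus \nabla^\perp \hmix.
\end{align*}
Moreover, Proposition~\ref{prop:TranslateEV} identifies an orthonormal basis of eigenfunctions of $A$, namely the normalized fields $\nabla \psi_k$ and $\nabla^\perp \phi_k$, where $\psi_k, \phi_k$ are the eigenfunctions of $-\Delta_{\Gamma^c}$ and $-\Delta_\Gamma$. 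First I would argue that each such basis field lies in $H^1(\Omega;\C^2)$: indeed, by Proposition~\ref{prop:RegularityAngles} the angle and corner hypotheses force $\psi_k, \phi_k \in H^2(\Omega)$, whence $\nabla \psi_k$ and $\nabla^\perp \phi_k$ are $H^1$ vector fields. Thus the subspace $\cD$ spanned by these fields consists of $H^1$ vector fields satisfying the boundary conditions, and it contains all eigenvectors of $A$; by Lemma~\ref{lem:core} it is therefore a core of $\sa$.

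The crux is then the following density/closedness argument. Given an arbitrary $u \in \cH_\Gamma = \dom\sa$, using that $\cD$ is a core I can find a sequence $u^n \in \cD$ with $u^n \to u$ in the form norm $\|\cdot\|_\sa$; equivalently $u^n \to u$ in $L^2$ while $\diver u^n \to \diver u$ and $\omega(u^n) \to \omega(u)$ in $L^2$. To upgrade this to $H^1$-convergence (and hence conclude $u \in H^1$), I would invoke a coercivity estimate of the type
\begin{align*}
 \|v\|_{H^1(\Omega;\C^2)}^2 \leq C \left( \|v\|_{L^2}^2 + \|\diver v\|_{L^2}^2 + \|\omega(v)\|_{L^2}^2 \right)
\end{align*}
valid for all $v$ in the relevant class of boundary-condition-respecting fields on this domain. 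Such a div-curl estimate is exactly what controls the full gradient by the divergence and vorticity together with the boundary data, and it holds on Lipschitz domains with the mixed normal/tangential boundary conditions provided the transition angles are at most $\pi/2$ and there are no re-entrant corners. Applying it to the Cauchy differences $u^n - u^m$ shows $(u^n)$ is Cauchy in $H^1$; its $H^1$-limit must coincide with the $L^2$-limit $u$, giving $u \in H^1(\Omega;\C^2)$, and the boundary conditions pass to the limit as in the proof of Proposition~\ref{prop:FormToOpA}.

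The main obstacle is establishing the coercivity estimate, i.e.\ that the graph norm of $(\diver, \omega)$ dominates the $H^1$-norm on fields with the mixed boundary conditions. This is where the geometric hypotheses (angles strictly below $\pi/2$, no inward-pointing corners) are genuinely needed, since the estimate can fail at re-entrant corners or at transition points with large angle. I expect to obtain it by transferring the scalar $H^2$-regularity to the vector setting: writing $u = \nabla \alpha + \nabla^\perp \beta$ with $\alpha \in \hmixc$, $\beta \in \hmix$ via \eqref{eq:HelmholtzSimplified}, one has $\Delta \alpha = \diver u$ and $\Delta \beta = \omega(u)$ with $\alpha, \beta$ in the domains of $-\Delta_{\Gamma^c}$ and $-\Delta_\Gamma$ respectively, so Proposition~\ref{prop:RegularityAngles} yields $\alpha, \beta \in H^2(\Omega)$ with norm bounds, and summing gives $u \in H^1$ with the desired estimate; the orthogonality of the two components in the decomposition keeps the constants uniform. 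The remaining care is purely bookkeeping: verifying that the boundary conditions on $u$ translate into the correct (Neumann/Dirichlet) conditions on $\alpha$ and $\beta$ so that Proposition~\ref{prop:RegularityAngles} is applicable.
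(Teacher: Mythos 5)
Your proposal is correct, and its essential engine is the same as the paper's: decompose $u \in \cH_\Gamma$ via \eqref{eq:HelmholtzSimplified} as $u = \nabla\psi + \nabla^\perp\phi$ with $\psi \in \hmixc$ and $\phi \in \hmix$, observe that $\Delta\psi = \diver u$ and $\Delta\phi = \omega(u)$ lie in $L^2(\Omega)$ and that the boundary conditions on $u$ force $\partial_\nu\psi|_\Gamma = 0$ and $\partial_\nu\phi|_{\Gamma^c} = 0$, so that $\psi \in \dom(-\Delta_{\Gamma^c})$ and $\phi \in \dom(-\Delta_\Gamma)$, and then invoke Proposition \ref{prop:RegularityAngles} to get $\psi,\phi \in H^2(\Omega)$ and hence $u = \nabla\psi + \nabla^\perp\phi \in H^1(\Omega;\C^2)$. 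This is exactly the argument you sketch in your final paragraph as the route to the coercivity estimate. The difference is that you embed it inside a density-plus-closedness scheme (approximate $u$ by elements of the eigenfunction core $\cD$ from Lemma \ref{lem:core}, prove a div--curl coercivity inequality, pass to the limit in $H^1$), which is redundant: the decomposition argument applies directly to an arbitrary $u \in \cH_\Gamma$ and immediately yields the conclusion with no approximation step, and the quantitative estimate, if wanted, then follows a posteriori from the closed graph theorem. The only step you dismiss as ``bookkeeping'' that genuinely requires an argument is the translation of the boundary conditions: for instance $\partial_\nu\psi|_\Gamma = u|_\Gamma\cdot\nu - \nabla^\perp\phi|_\Gamma\cdot\nu$, where the first term vanishes by assumption and the second because $\nabla^\perp\phi|_\Gamma\cdot\nu = -\partial_\tau\phi|_\Gamma$ and the tangential derivative of the $H^1_{0,\Gamma}$-function $\phi$ vanishes on $\Gamma$ by Lemma \ref{lem:TangentialDerivative} (applicable since $\Delta\phi = \omega(u) \in L^2(\Omega)$); the analogous computation gives $\partial_\nu\phi|_{\Gamma^c} = 0$. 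With that verification supplied, your argument closes.
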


\begin{proof}
It is clear that each $u \in H^1 (\Omega; \C^2)$ which satisfies the desired boundary conditions belongs to $\cH_\Gamma$. Conversely, each $u \in \cH_\Gamma$ may, according to Proposition \ref{prop:ThirdSpaceTrivial}, be written as $u = \nabla \psi + \nabla^\perp \phi$ with $\psi \in H_{0, \Gamma^c}^1 (\Omega)$ and $\phi \in H_{0, \Gamma}^1 (\Omega)$. Furthermore,
\begin{align*}
 \Delta \psi = \diver u \in L^2 (\Omega) \quad \text{and} \quad \Delta \phi = \omega (u) \in L^2 (\Omega),
\end{align*}
and on the boundary we have 
\begin{align*}
 \partial_\nu \psi |_\Gamma =  u |_\Gamma \cdot \nu - \nabla^\perp \phi |_\Gamma \cdot \nu = 0
\end{align*}
as well as
\begin{align*}
 \partial_\nu \phi |_{\Gamma^c} = \nabla^\perp \phi |_{\Gamma^c} \cdot \tau = u |_{\Gamma^c} \cdot \tau - \nabla \psi |_{\Gamma^c} \cdot \tau = 0,
\end{align*}
see Lemma \ref{lem:TangentialDerivative}. Hence, $\psi \in \dom (- \Delta_{\Gamma^c})$ and $\phi \in \dom (- \Delta_\Gamma)$ and, in particular, $\psi, \phi \in H^2 (\Omega)$ by Proposition \ref{prop:RegularityAngles}. It follows that $u = \nabla \psi + \nabla^\perp \phi \in H^1 (\Omega; \C^2)$, which completes the proof.
\end{proof}

Now we can state an alternative expression for the sesquilinear form $\sa$. To this end we denote by $\kappa$ the signed curvature of $\bnd$ with respect to the exterior unit normal $\nu$, defined at each point of $\bnd$ except at the corners, see for instance \cite[Exercise 8, Section 2.3]{O06} or \cite[Section 2.2]{PR10}. That is, the sign is chosen such that $\kappa \leq 0$ almost everywhere if $\Omega$ is convex. It can be expressed as 
\begin{equation*}
 \kappa = \nu \cdot \tau'
\end{equation*}
where the derivative $\tau'$ of the unit tangent vector field $\tau$ is to be understood piecewise via an arclength parametrization of the boundary in positive direction.

\begin{proposition}\label{prop:equalForms}
Assume that Hypothesis \ref{hyp2} is satisfied. Then
\begin{align}\label{eq:equalForms}
 \sa [u, v] = \int_\Omega \big( \nabla u_1 \cdot \nabla v_1+ \nabla u_2 \cdot \nabla v_2 \big)- \int_{\partial \Omega} \kappa \, u \cdot v
\end{align}
holds for all $u = (u_1, u_2)^\top, v = (v_1, v_2)^\top \in \cH_\Gamma$.
\end{proposition}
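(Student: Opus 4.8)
The plan is to prove \eqref{eq:equalForms} in three movements: a pointwise algebraic identity, a weak integration by parts that reduces the discrepancy between the two forms to a boundary integral, and a moving-frame computation on $\bnd$ that produces the curvature. Throughout I use that, by Lemma \ref{lem:formDomain}, every $u, v \in \cH_\Gamma$ lies in $H^1(\Omega; \C^2)$.

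\textbf{Pointwise algebra.} Expanding $\diver u\,\overline{\diver v} + \omega(u)\,\overline{\omega(v)}$ and comparing with $\nabla u_1 \cdot \nabla v_1 + \nabla u_2 \cdot \nabla v_2$, the four ``square'' products $\partial_i u_j\,\overline{\partial_i v_j}$ match exactly, leaving the ``cross'' density
\[
 R(u,v) := \partial_1 u_1\,\overline{\partial_2 v_2} + \partial_2 u_2\,\overline{\partial_1 v_1} - \partial_1 u_2\,\overline{\partial_2 v_1} - \partial_2 u_1\,\overline{\partial_1 v_2}.
\]
This holds almost everywhere for $u, v \in H^1(\Omega; \C^2)$ and may be integrated over $\Omega$, so it remains to show $\int_\Omega R(u,v) = -\int_{\bnd} \kappa\, u \cdot v$.

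\textbf{Reduction to the boundary.} I would rewrite $R(u,v) = \nabla^\perp u_1 \cdot \overline{\nabla v_2} - \nabla^\perp u_2 \cdot \overline{\nabla v_1}$ and apply the integration by parts formula \eqref{eq:PI} to the fields $\nabla^\perp u_1, \nabla^\perp u_2$, each of which lies in $H(\diver;\Omega)$ with vanishing divergence. Using $\nabla^\perp u_i |_{\bnd} \cdot \nu = -\partial_\tau u_i$ (as in the proof of Lemma \ref{lem:TangentialDerivative}) this yields
\[
 \int_\Omega R(u,v) = (\partial_\tau u_2, v_1)_{\bnd} - (\partial_\tau u_1, v_2)_{\bnd}.
\]
This step is rigorous at the level of $H^1$ regularity, since only first derivatives enter and the relevant fields have zero divergence.

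\textbf{The frame computation.} On each smooth arc I would decompose $u = u_\nu \nu + u_\tau \tau$ and $v = v_\nu \nu + v_\tau \tau$ in the orthonormal moving frame, with $u_\nu = u \cdot \nu$ etc. The Frenet relations $\tau' = \kappa \nu$ and $\nu' = -\kappa \tau$, consistent with $\kappa = \nu \cdot \tau'$, give $\partial_\tau u = (\partial_\tau u_\nu + \kappa u_\tau)\nu + (\partial_\tau u_\tau - \kappa u_\nu)\tau$, and after the $2\times 2$ frame algebra one finds
\[
 (\partial_\tau u_2)\overline{v_1} - (\partial_\tau u_1)\overline{v_2} = -\kappa\big(u_\nu \overline{v_\nu} + u_\tau \overline{v_\tau}\big) + (\partial_\tau u_\tau)\overline{v_\nu} - (\partial_\tau u_\nu)\overline{v_\tau}.
\]
Since $u_\nu \overline{v_\nu} + u_\tau \overline{v_\tau} = u \cdot v$, the first term integrates to $-\int_{\bnd}\kappa\, u \cdot v$. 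The remainder vanishes by the boundary conditions together with the connectedness in Hypothesis \ref{hyp2}: on $\Gamma$ one has $u_\nu = v_\nu = 0$, so $\partial_\tau u_\nu = 0$ and both summands vanish; on $\Gamma^c$ one has $u_\tau = v_\tau = 0$, so $\partial_\tau u_\tau = 0$ and both summands vanish again (the vanishing of the tangential derivative of an identically-zero boundary component being exactly Lemma \ref{lem:TangentialDerivative}). As $\bnd = \overline{\Gamma \cup \Gamma^c}$ up to two corner points, the remainder integral is zero, which completes the identity.

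I expect the main obstacle to be the rigor of the third movement: with only $H^1(\Omega)$ regularity the traces lie in $H^{1/2}(\bnd)$ and the tangential derivatives only in $H^{-1/2}(\bnd)$, so the pointwise Frenet manipulation is a priori formal. I would resolve this by a density argument. Both sides of \eqref{eq:equalForms} are continuous sesquilinear forms on $\cH_\Gamma$ in the $H^1$ topology — the curvature term is controlled by $\|\kappa\|_{L^\infty(\bnd)}$, which is finite because $\bnd$ is a finite union of compact $C^\infty$ arcs, together with the trace inequality, and the $H^1$ and graph norms on $\cH_\Gamma$ coincide up to equivalence by Lemma \ref{lem:formDomain} and the open mapping theorem. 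Hence it suffices to verify the identity on a dense subspace of fields that are smooth up to $\bnd$ and satisfy the two boundary conditions, for which the frame computation above is fully justified; alternatively one interprets each product and tangential derivative on a fixed smooth arc in the $H^{1/2}$--$H^{-1/2}$ duality, where multiplication by the smooth fields $\nu, \tau, \kappa$ preserves the spaces and the Leibniz and Frenet identities persist weakly.
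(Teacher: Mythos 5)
Your computational core is correct and checks out in detail: the pointwise identity isolating the cross density $R(u,v)$, the rewriting $R(u,v) = \nabla^\perp u_1 \cdot \nabla v_2 - \nabla^\perp u_2 \cdot \nabla v_1$, the single application of \eqref{eq:PI} to the divergence-free fields $\nabla^\perp u_j$ (which indeed needs only $H^1$ regularity of $u$), and the frame algebra yielding $-\kappa\, u\cdot v + (\partial_\tau u_\tau)\overline{v_\nu} - (\partial_\tau u_\nu)\overline{v_\tau}$ with the remainder killed by the boundary conditions. This is a genuinely different route from the paper, which instead integrates by parts \emph{twice} — first down to $-\int_\Omega \Delta u \cdot v$ plus boundary terms, then back up via Green's identity — and therefore must restrict $u$ from the outset to a core $\cD$ of fields with $\Delta u \in L^2$ that are smooth away from the corners. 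Your version is more economical in that it never needs $\Delta u \in L^2$ and produces the curvature from a cleaner first-order boundary identity.

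The genuine gap is the closing regularity step, and you have correctly located it but not closed it. Your first proposed fix asserts that fields smooth up to $\partial\Omega$ satisfying $u|_\Gamma\cdot\nu = 0$ and $u|_{\Gamma^c}\cdot\tau = 0$ are dense in $\cH_\Gamma$; for these mixed normal/tangential conditions this is not an off-the-shelf fact and you do not prove it. Your second fix (arc-by-arc $H^{1/2}$--$H^{-1/2}$ duality) runs into exactly the corner technicality the paper is at pains to avoid: the pairing $(\partial_\tau u_j, v_k)_{\partial\Omega}$ is defined globally on $\partial\Omega$, and splitting it into a sum over the smooth arcs $\Gamma_1,\dots,\Gamma_N$ is not automatic, since $H^{-1/2}$ distributions do not localize freely at the arc endpoints. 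The paper supplies precisely the missing ingredients: it first tests against $v \in C^\infty(\overline\Omega;\C^2)$ supported away from the corners (using the approximation result cited from \cite{EE18}) so that all boundary integrals are classical, and then obtains the dense subspace of regular $u$ not by a boundary-smoothing argument but by taking $\cD$ to be spanned by the eigenfunctions of $A$ — which lie in $\cH_\Gamma$, are smooth away from corners by elliptic regularity, and form a core by Lemma \ref{lem:core} — combined with the equivalence of $\|\cdot\|_\sa$ and $\|\cdot\|_{H^1(\Omega;\C^2)}$ on $\cH_\Gamma$ via the open mapping theorem. If you import that core construction and the smooth-$v$-away-from-corners reduction, your argument becomes a complete and arguably tidier proof; without them, the final movement remains formal.
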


\begin{proof}
The proof of this proposition will be carried out in two steps.

{\bf Step~1.} Let us denote by $\cD$ the space of all $u \in \cH_\Gamma$ such that
\begin{enumerate}
 \item[(a)]  $u \in C^\infty (\Omega; \C^2)$ and for each open set $U \subset \Omega$ whose closure does not contain any corners of $\partial \Omega$, $u \in C^\infty (\overline{U}; \C^2)$, and
 \item[(b)] $\Delta u \in L^2 (\Omega; \C^2)$.
\end{enumerate}
In this step we prove that \eqref{eq:equalForms} holds whenever $u \in \cD$ and $v \in \cH_\Gamma$. 

To this end, let $u \in \cD$. Due to condition (b) we can apply the integration by parts formulas \eqref{eq:PI} and \eqref{eq:PIomega} to obtain, for all $v \in \cH_\Gamma$,
\begin{align*}
 \sa[u,v] & = \int_{\Omega} \big( \diver{u} \, \overline{\diver{v}} + \omega(u) \, \overline{\omega(v)} \big) \\
 & = - \int_{\Omega} \nabla \diver{u} \cdot v + \left(\diver{u} |_{\partial \Omega}, v|_{\bnd} \cdot \nu \right)_{\bnd} \\
 & \quad - \int_{\Omega} \nabla^{\perp} \omega(u) \cdot v + \left(\omega(u) |_{\partial \Omega}, v|_{\bnd} \cdot \tau \right)_{\bnd} \\
 & = - \int_{\Omega} \Delta u_1 \overline{v_1} - \int_{\Omega} \Delta u_2 \overline{v_2} +\left(\diver{u} |_{\partial \Omega}, v|_{\bnd} \cdot \nu \right)_{\bnd} + \left(\omega(u) |_{\partial \Omega}, v|_{\bnd} \cdot \tau \right)_{\bnd};
\end{align*}
the last equality relies on the identity $\Delta u = \nabla \diver u + \nabla^\perp \omega (u)$. An application of Green's identity then yields
\begin{align}
\label{eq:partialInt}
\begin{split}
 \sa [u,v] & = \int_{\om} \nabla u_1 \cdot \nabla v_1 +  \int_{\om} \nabla u_2 \cdot \nabla v_2  - \left(\partial_\nu u_1 |_{\partial \Omega}, v_1 \right)_{\bnd} - \left(\partial_\nu u_2 |_{\partial \Omega}, v_2 \right)_{\bnd} \\
 & \quad + \left(\diver{u} |_{\partial \Omega}, v|_{\bnd} \cdot \nu \right)_{\bnd} + \left(\omega(u) |_{\partial \Omega}, v|_{\bnd} \cdot \tau \right)_{\bnd}.
\end{split}
\end{align}
It remains to compute the boundary terms. To avoid technicalities at the corners, we perform the computations first for more regular $v$ and extend the result to $v \in \cH_\Gamma$ by approximation afterwards.

Let $\Gamma_1, \dots, \Gamma_N$ be the smooth arcs which constitute $\partial \Omega$ and let $v \in C^\infty (\overline \Omega; \C^2)$ be such that the support of $v$ does not contain any corner of $\partial \Omega$; note that for the moment we do not require any boundary conditions on $v$. In this case, due to the assumption (a) on $u$ we may write
\begin{align}\label{eq:computeBdr}
\begin{split}
 \left(\partial_\nu u_1 |_{\partial \Omega}, v_1 \right)_{\bnd} + \left(\partial_\nu u_2 |_{\partial \Omega}, v_2 \right)_{\bnd} = \sum_{l = 1}^N \int_{\Gamma_l} \binom{\nabla u_1 \cdot \nu}{\nabla u_2 \cdot \nu} \cdot v.
\end{split}
\end{align}
Let us fix some $l \in \{1, \dots, N\}$ and let $r : [0, L] \to \R^2$ be a $C^\infty$-smooth arc length parametrization of $\Gamma_l$, taken in positive direction. Then at each point $x = r (s)$ with $s \in (0, L)$,
\begin{align}
\label{eq:curveProperties}
 \tau (x) = r' (s), \quad \nu (x) = - r' (s)^\perp, \quad \text{and} \quad \kappa (x) = \nu (x) \cdot r'' (s) = \tau (x) \cdot r'' (s)^\perp.
\end{align}
Therefore for $x = r (s)$, $s \in (0, L)$,  we have 
\begin{align*}
 \frac{\dd}{\dd s} (u_j (r (s))) = (\nabla u_j) (r (s)) \cdot r' (s) = \partial_1 u_j (x) \tau_1 (x) + \partial_2 u_j (x) \tau_2 (x), \quad j = 1, 2,
\end{align*}
and thus
\begin{align}\label{eq:e1}
 \binom{\partial_1 u (x) \cdot \tau (x)}{\partial_2 u (x) \cdot \tau (x)} \cdot \nu (x) 
 & = \frac{\dd u (r (s))}{\dd s} \cdot \nu (x) + \partial_1 u_2 (x) - \partial_2 u_1 (x).
\end{align}
Analogously,
\begin{align}\label{eq:e2}
 \binom{\partial_1 u (x) \cdot \nu (x)}{\partial_2 u (x) \cdot \nu (x)} \cdot \nu (x) = - \frac{\dd u (r (s))}{\dd s} \cdot \tau(x) + \partial_1 u_1 (x) + \partial_2 u_2 (x).
\end{align}
Now we distinguish two cases. If $\Gamma_l \subset \Gamma$ then $u |_{\Gamma_l} \cdot \nu = 0$ and thus, using \eqref{eq:curveProperties},
\begin{align*}
 \frac{\dd u (r (s))}{\dd s} \cdot \nu (x) & = \frac{\dd}{\dd s} (u (r (s))  \cdot \nu (r (s))) - u (x) \cdot \frac{\dd \nu (r (s))}{\dd s} \\
 & = (u (x) \cdot \tau (x)) \tau (x) \cdot r'' (s)^\perp = \kappa (x) u (x) \cdot \tau (x).
\end{align*}
Hence, if we decompose $v$ into its tangential and normal components on $\Gamma_l$, \eqref{eq:e1} yields
\begin{align*}
 \int_{\Gamma_l} \binom{\nabla u_1 \cdot \nu}{\nabla u_2 \cdot \nu} \cdot v & = \int_{\Gamma_l} (\tau \cdot v) \binom{\partial_1 u \cdot \tau}{\partial_2 u \cdot \tau} \cdot \nu + \int_{\Gamma_l} \binom{\nabla u_1 \cdot \nu}{\nabla u_2 \cdot \nu} \cdot (v \cdot \nu) \nu \\
 & = \int_{\Gamma_l} \big( \kappa \, u \cdot v + \omega (u) \, \tau \cdot v \big) + \int_{\Gamma_l} \binom{\nabla u_1 \cdot \nu}{\nabla u_2 \cdot \nu} \cdot (v \cdot \nu) \nu,
\end{align*}
where we used $u \cdot v = (u \cdot \tau) (\tau \cdot v)$. On the other hand, if $\Gamma_l \subset \Gamma^c$, then $u |_{\Gamma_l} \cdot \tau = 0$ and we obtain from \eqref{eq:curveProperties}
\begin{align*}
 - \frac{\dd u (r (s))}{\dd s} \cdot \tau(x) = u (x) \cdot \frac{\dd \tau (r (s))}{\dd s} = \kappa (x) u (x) \cdot \nu (x)
\end{align*}
and hence, from \eqref{eq:e2},
\begin{align*}
 \int_{\Gamma_l} \binom{\nabla u_1 \cdot \nu}{\nabla u_2 \cdot \nu} \cdot v & = \int_{\Gamma_l} (\nu \cdot v) \binom{\partial_1 u \cdot \nu}{\partial_2 u \cdot \nu} \cdot \nu + \int_{\Gamma_l} \binom{\nabla u_1 \cdot \nu}{\nabla u_2 \cdot \nu} \cdot (v \cdot \tau) \tau \\
 & = \int_{\Gamma_l} \big( \kappa \, u \cdot v + \diver u \, \nu \cdot v \big) + \int_{\Gamma_l} \binom{\nabla u_1 \cdot \nu}{\nabla u_2 \cdot \nu} \cdot (v \cdot \tau) \tau.
\end{align*}
Therefore, taking into account \eqref{eq:computeBdr}, the boundary terms in \eqref{eq:partialInt} sum up to
\begin{align}\label{eq:BT}
 - \int_{\partial \Omega} \kappa \, u \cdot v - \sum_{\Gamma_l \subset \Gamma} \int_{\Gamma_l} \binom{\nabla u_1 \cdot \nu}{\nabla u_2 \cdot \nu} \cdot (v \cdot \nu) \nu - \sum_{\Gamma_l \subset \Gamma^c} \int_{\Gamma_l} \binom{\nabla u_1 \cdot \nu}{\nabla u_2 \cdot \nu} \cdot (v \cdot \tau) \tau.
\end{align}

Let now $v \in \cH_\Gamma$. By \cite[Chapter 8, Corollary 6.4]{EE18} there exists a sequence $(v^n)_n$ of functions in $C^\infty (\overline{\Omega}; \C^2)$ whose supports do not contain the corners of $\partial \Omega$ such that $v^n \to v$ in $H^1 (\Omega; \C^2)$. Then $v^n |_{\Gamma_l} \to v |_{\Gamma_l}$ in $H^{1/2} (\Gamma_l)$ for each $l$ by the continuity of the trace operator from $H^1 (\Omega)$ to $H^{1/2} (\partial \Omega)$. In particular, $v^n |_{\Gamma_l} \cdot \nu \to v |_{\Gamma_l} \cdot \nu$ and $v^n |_{\Gamma_l} \cdot \tau \to v |_{\Gamma_l} \cdot \tau$ in $H^{1/2} (\Gamma_l)$. Hence using the boundary conditions $v |_\Gamma \cdot \nu = 0$ and $v |_{\Gamma^c} \cdot \tau = 0$, it follows from \eqref{eq:BT} and \eqref{eq:partialInt} that
\begin{align}\label{eq:fast}
 \sa [u, v] = \int_{\om} \nabla u_1 \cdot \nabla v_1 +  \int_{\om} \nabla u_2 \cdot \nabla v_2 - \int_{\partial \Omega} \kappa \, u \cdot v
\end{align}
is true for all $u \in \cD$, $v \in \cH_\Gamma$.

{\bf Step~2.} In this step we prove the assertion of the proposition; that is, we extend \eqref{eq:fast} to all $u \in \cH_\Gamma$. First of all, note that, for all $u \in \cH_\Gamma$,
\begin{align*}
 \sa [u] & = \int_\Omega \left( |\diver u|^2 + |\omega (u)|^2 \right) = \int_\Omega \left( |\nabla u_1|^2 + |\nabla u_2|^2 - 2 \Real \nabla u_1 \cdot \nabla^\perp u_2 \right) \\
 & \leq 2 \int_\Omega \left( |\nabla u_1|^2 + |\nabla u_2|^2 \right) \leq 2 \|u\|_{H^1 (\Omega; \C^2)}^2.
\end{align*}
Hence there exists a constant $C > 0$ such that 
\begin{align*}
 \|u\|_\sa^2 = \sa [u] + \int_\Omega |u|^2 \leq C \|u\|_{H^1 (\Omega; \C^2)}^2, \quad u \in \cH_\Gamma.
\end{align*}
Since $\cH_\Gamma$ is complete with respect to both norms, the open mapping theorem implies that the two norms are equivalent on $\cH_\Gamma$. 

Consider the space $\cD \subset \cH_\Gamma$ defined in Step~1. By elliptic regularity, see, e.g., \cite[Theorem 4.18 (ii)]{M00},
\begin{align*}
 \ker (A - \eta_j) = \nabla \ker (- \Delta_{\Gamma^c} - \eta_j) \oplus \nabla^\perp \ker (- \Delta_\Gamma - \eta_j) \subset \cD
\end{align*}
holds for each $j$; cf.\ Proposition \ref{prop:TranslateEV}. It follows from Lemma \ref{lem:core} that $\cD$ is a core of $\sa$, i.e., $\cD$ is dense in $\cH_\Gamma$ with respect to the norm $\| \cdot \|_\sa$. By the equivalence of the norms, $\cD$ is also dense in $\cH_\Gamma$ with respect to the norm $\|\cdot\|_{H^1 (\Omega; \C^2)}$. Using the continuity of the trace map $H^1 (\Omega; \C^2) \to H^{1/2} (\partial \Omega; \C^2)$ it follows that \eqref{eq:fast} extends to all $u, v \in \cH_\Gamma$. This completes the proof.
\end{proof}

As an immediate consequence of Proposition \ref{prop:equalForms} and Theorem \ref{thm:VariationalPrincipleOneForm2} we obtain the following variational principle. Recall that by Lemma~\ref{lem:formDomain}
\begin{align*}
 \cH_\Gamma = \left\{ u \in H^1 (\Omega; \C^2) : u |_\Gamma \cdot \nu = 0, u |_{\Gamma^c} \cdot \tau = 0 \right\}
\end{align*}
holds as soon as Hypothesis \ref{hyp2} is satisfied.

\begin{theorem}
\label{thm:VariationalPrincipleOneForm3}
Assume that Hypothesis \ref{hyp2} is satisfied. Denote by
\begin{align*}
 \eta_1 \leq \eta_2 \leq \dots
\end{align*}
the eigenvalues of $A$, i.e.\ the union of the eigenvalues of $- \Delta_{\ga}$ and $- \Delta_{\gac}$, counted according to their multiplicities. Then
\begin{align*}
\begin{split}
 \eta_j & = \min_{\substack{U \subset \cH_{\Gamma} \\ \dim U = j}} \max_{\substack{u \in U \\ u \neq 0}} \frac{\int_{\Omega} \left( |\nabla u_1|^2 + |\nabla u_2|^2 \right) - \int_{\partial \Omega} \kappa \, |u|^2}{\int_\Omega \abs{u}^2}.
\end{split}
\end{align*}
In particular, the first eigenvalue $\eta_1$ of $A$ is given by
\begin{align}
\label{eq:minMaxEv1OneForm3}
\begin{split}
 \eta_1 & = \min_{\substack{u \in \cH_{\Gamma} \\  u \neq 0}} \frac{\int_{\Omega} \left( |\nabla u_1|^2 + |\nabla u_2|^2 \right) - \int_{\partial \Omega} \kappa \, |u|^2}{\int_\Omega \abs{u}^2}.
\end{split}
\end{align}
Moreover, $u \in \cH_\Gamma$ with $u \neq 0$ is a minimizer of \eqref{eq:minMaxEv1OneForm3} if and only if $u = \nabla \psi + \nabla^\perp \phi$ for some $\psi \in \ker (-\Delta_\gac - \eta_1)$ and $\phi \in \ker (-\Delta_\ga - \eta_1)$.
\end{theorem}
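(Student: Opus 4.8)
The plan is to obtain this theorem by directly combining the two results that precede it, since under Hypothesis~\ref{hyp2} no new analytic work is needed. First I would observe that Hypothesis~\ref{hyp2} is strictly stronger than Hypothesis~\ref{hyp1} and moreover forces $\Omega$ to be simply connected and $\Gamma$ (hence $\Gamma^c$) to be connected; this is exactly the point recorded immediately before Lemma~\ref{lem:formDomain}. Consequently the hypotheses of Theorem~\ref{thm:VariationalPrincipleOneForm2} are met, and that theorem already supplies the full min-max characterization of the eigenvalues $\eta_j$ of $A$ together with the description of the first-eigenvalue minimizers as $u = \nabla\psi + \nabla^\perp\phi$ with $\psi \in \ker(-\Delta_{\gac} - \eta_1)$ and $\phi \in \ker(-\Delta_\ga - \eta_1)$. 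The only difference between Theorem~\ref{thm:VariationalPrincipleOneForm2} and the present statement is the way the numerator of the Rayleigh quotient is written.

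The second step is therefore purely a rewriting of that numerator. For each fixed $u \in \cH_\Gamma$ the quantity appearing in Theorem~\ref{thm:VariationalPrincipleOneForm2} is the quadratic form $\sa[u] = \sa[u,u] = \int_\Omega(\abs{\diver u}^2 + \abs{\omega(u)}^2)$. I would invoke Proposition~\ref{prop:equalForms} with $v = u$, which turns the sesquilinear identity \eqref{eq:equalForms} into the diagonal identity
\[
 \sa[u] = \int_\Omega \left( \abs{\nabla u_1}^2 + \abs{\nabla u_2}^2 \right) - \int_{\partial\Omega} \kappa\, \abs{u}^2, \quad u \in \cH_\Gamma.
\]
Since this equality holds for every admissible $u$, replacing the numerator term by term leaves each Rayleigh quotient unchanged as a function on $\cH_\Gamma \setminus \{0\}$. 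Hence the min-max values, the minima, and the set of minimizers are literally the same as in Theorem~\ref{thm:VariationalPrincipleOneForm2}, which yields all three assertions. I would also recall, via Lemma~\ref{lem:formDomain}, that under Hypothesis~\ref{hyp2} the admissible class satisfies $\cH_\Gamma = \{u \in H^1(\Omega;\C^2) : u|_\Gamma \cdot \nu = 0,\ u|_{\gac}\cdot\tau = 0\}$, so that the Dirichlet-type integral $\int_\Omega(\abs{\nabla u_1}^2 + \abs{\nabla u_2}^2)$ and the boundary trace $\int_{\partial\Omega}\kappa\abs{u}^2$ are genuinely well defined for the competitors.

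I do not expect a genuine obstacle at this stage, precisely because the substantive content has already been discharged: the spectral bookkeeping (the positive spectrum of $A$ being the union of the two mixed-Laplacian spectra, and the vanishing of $\ker A$) is carried by Theorem~\ref{thm:VariationalPrincipleOneForm2} through Propositions~\ref{prop:TranslateEV} and~\ref{prop:ThirdSpaceTrivial}, while the delicate integration by parts producing the curvature boundary term is the content of Proposition~\ref{prop:equalForms}. The one point deserving a word of care is to confirm that the right-hand side of the diagonal identity is real, so that it is a legitimate quadratic form in the min-max; this is clear since $\kappa$ is real-valued and the remaining terms are manifestly nonnegative, consistent with the nonnegativity of $\sa$ established in Proposition~\ref{prop:FormToOpA}. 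With these observations the proof is immediate.
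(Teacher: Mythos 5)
Your proposal is correct and matches the paper's own route exactly: the paper states Theorem \ref{thm:VariationalPrincipleOneForm3} as an immediate consequence of Theorem \ref{thm:VariationalPrincipleOneForm2} together with the form identity of Proposition \ref{prop:equalForms}, which is precisely the substitution of numerators you describe. No further comment is needed.
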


\section{Eigenvalue inequalities and monotonicity of eigenfunctions}\label{sec:main}

In this section we apply the variational principle of Theorem \ref{thm:VariationalPrincipleOneForm3} to obtain an inequality between the lowest eigenvalues of two mixed Laplacian eigenvalue problems with boundary conditions dual to each other. At the same time we establish monotonicity of the corresponding eigenfunction. In the following we denote by $Q_1, \dots, Q_4$ the open first to fourth quadrants in $\R^2$. The proof of the following result is inspired by the proof of \cite[Theorem~1.2]{R21p} and by methods from the proof of Courant's nodal domain theorem. The following theorem comprises Theorems \ref{thm:intro1} and \ref{thm:intro2} in the introduction.

\begin{theorem}
\label{thm:MainResult}
Assume that Hypothesis \ref{hyp2} is satisfied. Assume in addition that $\om$ can be rotated such that $\nu(x) \in \overline{Q_3}$ for almost all $x \in \gac$ and $\nu(x) \in \overline{Q_2} \cup \overline{Q_4}$ for almost all $x \in \ga$; cf.\ Figure~\ref{fig:DomainsIntro}. Then
\begin{equation}
\label{eq:mainEVin}
 \lambda_1^{\Gamma^c} < \lambda_1^\Gamma
\end{equation}
holds. Furthermore, the eigenfunction of $- \Delta_{\Gamma^c}$ corresponding to $\lambda_1^{\Gamma^c}$, chosen positive, is strictly increasing in the directions of both $\ee_1 = (1, 0)^\top$ and $\ee_2 = (0, 1)^\top$, supposed $\Omega$ is rotated in the above way.
\end{theorem}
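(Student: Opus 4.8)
The plan is to run the curvature variational principle of Theorem~\ref{thm:VariationalPrincipleOneForm3}, whose minimizers are, by that theorem, exactly the fields $u=\nabla\psi+\nabla^\perp\phi$ with $\psi\in\ker(-\Delta_{\gac}-\eta_1)$ and $\phi\in\ker(-\Delta_\ga-\eta_1)$, where $\eta_1=\min\{\lambda_1^\ga,\lambda_1^{\gac}\}$; since $-\Delta$ is real, all minimizers may be taken real-valued. The engine of the proof is a \emph{sign-reflection} trick: for real $u=(u_1,u_2)^\top\in\cH_\Gamma$ the field $\tilde u:=(|u_1|,|u_2|)^\top$ satisfies $|\tilde u|=|u|$ and $|\nabla|u_j||=|\nabla u_j|$ a.e., so it has the \emph{same} Rayleigh quotient in \eqref{eq:minMaxEv1OneForm3}. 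The crucial point is that $\tilde u$ stays in $\cH_\Gamma$, and this is exactly where the quadrant hypotheses enter: on $\ga$ one has $\nu\in\overline{Q_2}\cup\overline{Q_4}$, i.e.\ $\nu_1\nu_2\le0$, and an elementary case check shows that $u_1\nu_1+u_2\nu_2=0$ together with $\nu_1\nu_2\le0$ forces $u_1$ and $u_2$ to share a sign wherever both $\nu_j\neq0$ (and forces the relevant component to vanish when some $\nu_j=0$), so that $|u_1|\nu_1+|u_2|\nu_2=0$; the same reasoning on $\gac$, where $\tau=(-\nu_2,\nu_1)\in\overline{Q_4}$ and hence $\tau_1\tau_2\le0$, preserves $u|_{\gac}\cdot\tau=0$. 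Here I use Lemma~\ref{lem:formDomain}, which gives $\cH_\Gamma\subset H^1(\Omega;\C^2)$, so that the traces are genuine and these identities hold almost everywhere.

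Consequently, whenever $u$ is a minimizer so is $\tilde u$, and by Theorem~\ref{thm:VariationalPrincipleOneForm3} together with Remark~\ref{rem:action} the field $\tilde u$ is an eigenfunction of $A$ at $\eta_1$, whence each component $|u_j|$ is a weak---thus, by interior elliptic regularity, classical---solution of $-\Delta w=\eta_1 w$ in $\Omega$. Then $\Delta|u_j|=-\eta_1|u_j|\le0$, so $|u_j|$ is a nonnegative superharmonic function; by the minimum principle it is either identically $0$ or strictly positive in $\Omega$. In particular no component of a minimizer can change sign, since a sign change would produce an interior zero of $|u_j|$.

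Next I would establish the strict inequality $\lambda_1^{\gac}<\lambda_1^{\ga}$. Suppose it fails; as $\eta_1=\min\{\lambda_1^\ga,\lambda_1^{\gac}\}$ this means $\eta_1=\lambda_1^\ga$, so the perpendicular gradient $\nabla^\perp\phi_1=(-\partial_2\phi_1,\partial_1\phi_1)^\top$ of the positive ground state $\phi_1$ of $-\Delta_\ga$ is itself a minimizer. By the previous paragraph its components do not change sign, so $\partial_1\phi_1$ and $\partial_2\phi_1$ have constant sign and $\nabla\phi_1$ never vanishes in $\Omega$; the Neumann condition $\nabla\phi_1\cdot\nu=0$ on $\gac$ with $\nu\in\overline{Q_3}$ then rules out $\nabla\phi_1\in Q_1\cup Q_3$, leaving $\nabla\phi_1\in Q_2\cup Q_4$. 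For such a gradient a direct computation using $\tau\in\overline{Q_4}$ shows that $\partial_\tau\phi_1=\nabla\phi_1\cdot\tau$ has a strict, constant sign along the connected arc $\gac$, so $\phi_1$ is strictly monotone along $\gac$. This is absurd: the endpoints of $\gac$ are the two corners where $\ga$ and $\gac$ meet, at both of which $\phi_1=0$ by the Dirichlet condition on $\ga$. Hence $\eta_1=\lambda_1^{\gac}<\lambda_1^\ga$.

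Monotonicity of $\psi_1$ then follows cheaply. By the strict inequality just proved and the simplicity of $\lambda_1^{\gac}$, the minimizer is $u=\nabla\psi_1$ up to scaling, where $\psi_1>0$ is the ground state of \eqref{eq:EVproblem}; its components do not change sign, so $\partial_1\psi_1$ and $\partial_2\psi_1$ have constant sign. To pin the signs I use that on $\gac$ the Dirichlet condition $\psi_1=0$ forces $\partial_\tau\psi_1=0$, i.e.\ $\nabla\psi_1\parallel\nu$, while Hopf's lemma (available by Proposition~\ref{prop:RegularityAngles} and the absence of inward-pointing corners) gives $\nabla\psi_1\cdot\nu<0$; with $\nu\in\overline{Q_3}$ this places $\nabla\psi_1\in\overline{Q_1}$ on $\gac$, and the constant strict sign upgrades this to $\partial_1\psi_1>0$ and $\partial_2\psi_1>0$ throughout $\Omega$. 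I expect the main obstacle to be the clean justification that $u\mapsto\tilde u$ maps $\cH_\Gamma$ into itself---the step that converts the quadrant hypotheses into the same-sign property of the components---which must be argued at the level of the $H^{1/2}$ traces and handled with care at the finitely many corners, where only the weak boundary conditions are available; a secondary technical point is ensuring that $\nabla\phi_1$ and $\nabla\psi_1$ do not degenerate at isolated boundary points, for which the smoothness of the arcs and the angle condition in Hypothesis~\ref{hyp2} are exactly what is needed.
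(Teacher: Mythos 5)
Your overall strategy is the paper's: the curvature variational principle of Theorem \ref{thm:VariationalPrincipleOneForm3}, the reflection $u\mapsto(|u_1|,|u_2|)^\top$ made admissible by $\nu_1\nu_2\le 0$ on $\Gamma$ and $\tau_1\tau_2\le 0$ on $\Gamma^c$, and the minimum principle for $-\Delta|u_j|=\eta_1|u_j|\ge 0$ giving the dichotomy ``$|u_j|\equiv 0$ or $|u_j|>0$ in $\Omega$''. Up to that point you match the paper. Your contradiction for $\eta_1=\lambda_1^\Gamma$ (strict monotonicity of $\phi_1$ along the arc $\Gamma^c$, whose endpoints carry the Dirichlet condition from $\Gamma$) is a legitimate variant of the paper's argument, which instead runs an axioparallel path through the interior of $\Omega$ between two points of $\Gamma$; the interior version is safer, because the strict signs of $\partial_j\phi_1$ are only known in the open set $\Omega$, so their traces on $\Gamma^c$ are a priori only non-strict and your ``strict, constant sign along $\gac$'' needs an extra word.

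The genuine gap is that you never exclude the degenerate branch of your dichotomy, namely that one component of the minimizer vanishes \emph{identically}. ``No component changes sign'' is not enough: if $\partial_2\psi_1\equiv 0$, then $\psi_1$ is not strictly increasing in the direction of $\ee_2$ and the conclusion of the theorem fails; likewise your assertion $\nabla\phi_1\in Q_2\cup Q_4$ tacitly assumes both components are non-vanishing. Your Hopf-lemma patch does not close this: Hypothesis \ref{hyp2} allows $\Gamma^c$ to be, say, a vertical segment with $\nu=(-1,0)^\top\in\overline{Q_3}$, in which case $\nabla\psi_1\parallel\nu$ on $\Gamma^c$ gives $\partial_2\psi_1=0$ there, which is consistent with both $\partial_2\psi_1\equiv 0$ and $\partial_2\psi_1>0$ in $\Omega$, so Hopf's lemma yields no information on the second component. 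The paper devotes a separate argument to exactly this point: assuming $u_1\equiv 0$, it combines $u|_{\Gamma^c}\cdot\tau=0$ and $\diver u|_{\Gamma^c}=-\eta_1\psi|_{\Gamma^c}=0$ with a unique continuation result (\cite[Lemma 2.2]{LR17}) to show that $\Gamma^c$ must consist of axioparallel pieces, argues similarly for $\Gamma$ using $u|_\Gamma\cdot\nu=0$ and $\omega(u)|_\Gamma=0$, and concludes that $\Omega$ would be an axioparallel rectangle, contradicting the requirement that the interior angles where $\Gamma$ and $\Gamma^c$ meet are strictly below $\pi/2$. Some argument of this kind is indispensable, and it is the one substantive idea missing from your proposal.
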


\begin{proof}
We assume that $\Omega$ is rotated as stated in the theorem. Let $u \in \cH_\Gamma$ be a minimizer of \eqref{eq:minMaxEv1OneForm3}. Then $u$ satisfies the boundary conditions
\begin{equation}
\label{eq:uHgamma}
\begin{cases}
 \nu_1 u_1 + \nu_2 u_2 = 0 \quad \text{almost everywhere on}~\Gamma; \\
 \tau_1 u_1 + \tau_2 u_2 = 0 \quad \text{almost everywhere on}~\Gamma^c.
\end{cases}
\end{equation}
Define
\begin{align*}
 v = \binom{v_1}{v_2} := \binom{|u_1|}{|u_2|}.
\end{align*}
Our assumptions on the normal directions of $\Gamma$ and $\Gamma^c$ imply
\begin{equation}
\label{eq:NormalsQuadr}
\begin{cases}
 \nu_1 \nu_2 \le 0 \quad \text{almost everywhere on}~\Gamma, \\
 \tau_1 \tau_2 \le 0 \quad \text{almost everywhere on}~\Gamma^c, \\
\end{cases}
\end{equation}
for the components of the normal and tangential unit vector fields. From this and \eqref{eq:uHgamma} it follows immediately that $v$ satisfies the same boundary conditions as $u$, i.e.\ $v \in \cH_\Gamma$. Moreover, due to
\begin{equation*}
 \partial_j v_j = \partial_j \abs{u_j} = \sign(u_j) \abs{\partial_j u_j} \le |\partial_j u_j|
\end{equation*}
the quotient
\begin{align*}
 \frac{\int_\Omega \big( |\nabla v_1|^2 + |\nabla v_2|^2 \big) - \int_{\partial \Omega} \kappa \big( |v_1|^2 + |v_2|^2 \big)}{\int_\Omega \left( |v_1|^2 + |v_2|^2 \right)}
\end{align*}
is less or equal the same term with $v$ replaced by $u$; however, as $u$ is a minimizer of the Rayleigh quotient in \eqref{eq:minMaxEv1OneForm3}, it follows that $v$ is a minimizer as well. Then by Theorem \ref{thm:VariationalPrincipleOneForm3}, $A v = \eta_1 v$, that is,
\begin{equation*}
 - \Delta v_j = \eta_1 v_j \geq 0, \quad j = 1, 2,
\end{equation*}
cf.\ Remark \ref{rem:action}. Hence, by the maximum principle for subharmonic functions applied to $- v_j$, each $v_j$ is either non-zero everywhere in $\Omega$ or constantly equal to zero. In particular, $u_j$ is either non-zero everywhere in $\Omega$ or identically zero, $j = 1, 2$. 

Next we argue that both components of $u$ must be strictly positive in $\Omega$. Assume for a contradiction that, without loss of generality, $u_1 = 0$ constantly in $\Omega$. We prove that this implies that $\Omega$ is an axioparallel rectangle, which contradicts the assumptions on the angles between $\Gamma$ and $\Gamma^c$. In fact, if $\Gamma^c$ contains a part which is not axioparallel, then there exists a non-empty, relatively open set $\Sigma \subset \Gamma^c$ such that $\tau_2 \neq 0$ almost everywhere on $\Sigma$ and, hence, $u |_{\Gamma^c} \cdot \tau = 0$ implies $u_2 |_\Sigma = 0$; on the other hand, by \eqref{eq:eigenspace}, $u$ can be written $u = \nabla \psi + \nabla^\perp \phi$ for some $\psi \in \ker (- \Delta_{\Gamma_c} - \eta_1)$ and $\phi \in \ker (- \Delta_{\Gamma} - \eta_1)$; hence, $\diver u = \Delta \psi = - \eta_1 \psi$ and, thus, $0 = \diver u |_{\Gamma^c} = \partial_2 u_2 |_\Sigma$. Since $\ee_2$ is non-tangential on $\Sigma$, we get $\partial_\nu u_2 |_\Sigma = 0$. As $- \Delta u_2 = \eta_1 u_2$ and $u_2$ together with its normal derivative vanishes on $\Sigma$, a unique continuation argument, see, e.g., \cite[Lemma 2.2]{LR17}, gives $u_2 = 0$ constantly in $\Omega$, a contradiction. Analogously, we can use the boundary conditions $u |_\Gamma \cdot \nu = 0$ and $\omega (u) |_\Gamma = 0$ to conclude that $\Gamma$ consists of axioparallel pieces only. Thus $\Omega$ is a rectangle, a contradiction to the angle assumption in Hypothesis \ref{hyp2}. Hence we have shown that both $u_1$ and $u_2$ are strictly non-zero in $\Omega$, and we can conclude further from \eqref{eq:uHgamma} and \eqref{eq:NormalsQuadr} that $\sgn (u_1) = \sgn (u_2)$.

Next, let us show that
\begin{align}\label{eq:crucialIneq}
 \eta_1 = \lambda_1^{\Gamma^c} < \lambda_1^\Gamma
\end{align}
holds. For a contradiction, assume $\eta_1 = \lambda_1^\Gamma$ and let $\phi \in \ker (- \Delta_\Gamma - \eta_1)$ be non-trivial. Then by Proposition \ref{prop:TranslateEV}, $u = \nabla^\perp \phi \in \cH_\Gamma$ is a minimizer of \eqref{eq:minMaxEv1OneForm3}. Thus, according to what we have just seen, we may choose the sign of $\phi$ such that both $u_1 = - \partial_2 \phi$ and $u_2 = \partial_1 \phi$ are strictly positive in $\Omega$; that is, $\phi$ is strictly increasing in the directions of both $\ee_1$ and $- \ee_2$. By the assumptions on $\Omega$, there exist points $x_1, x_2 \in \Gamma$ and a path through $\Omega$ from $x_1$ to $x_2$ consisting of axio-parallel line segments each of which points into the direction of either $\ee_1$ or $- \ee_2$; cf.\ Figure~\ref{fig:DomainWithPaths}.
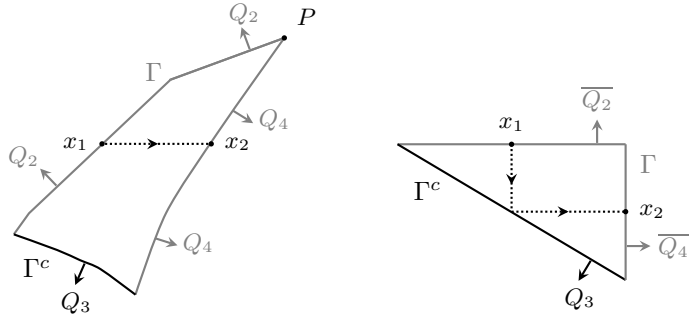
\begin{figure}[h]
\begin{tikzpicture}[scale=0.4]
\pgfsetlinewidth{0.8pt}
\node[white] (A) at (2,4) {};
\node[white] (B) at (6,2) {};
\node[white] (C) at (10.89392,10.5121) {};
\node[white] (D) at (7.15202,9.12554) {};
\node[white] (E) at (3.42633,3.48028) {};
\node[white] (F) at (4.17903,3.14355) {};
\node[white] (G) at (4.8525,2.84643) {};
\node[white] (H) at (7.011,4.71739) {};
\node[white] (I) at (8.74977,7.44076) {};
\draw(2.76,3.5) node[black][left, below]{$\gac$};
\draw(6.6,9.4) node[gray]{$\ga$};
\draw[gray] (C.center) -- node[sloped,inner sep=0cm,above,pos=0.3, anchor=south west,
minimum height=0.3cm,minimum width=1cm](M){}(7.15,9.13) -- (D.center);
\draw[gray] (D.center) -- node[sloped,inner sep=0cm,above,pos=0.8, anchor=south west,
minimum height=0.3cm,minimum width=1cm](N){}(2.5,4.7) -- (A.center);
\path[gray] (N.south west) edge[-stealth,shorten <=0.5pt] node[above left,pos=0.5] {\small $Q_2$} (N.north west);
\path[gray] (M.south west) edge[-stealth,shorten <=0.5pt] node[above,pos=0.7] {\small $Q_2$} (M.north west);
\draw[white] (F.center) -- node[sloped,inner sep=0cm,above,pos=.5, anchor=north east, minimum height=0.3cm,minimum width=1cm](F1){}(4.51,3) -- (G.center);
\path (F1.north east) edge[-stealth,shorten <=0.5pt] node[below,pos=0.9] {\small $Q_3$} (F1.south east);
\draw[white] (B.center) -- node[sloped,inner sep=0cm,above,pos=.5, anchor=north east, minimum height=0.3cm,minimum width=1cm](F2){}(7.2,5.77) -- (H.center);
\path[gray] (F2.north east) edge[-stealth,shorten <=0.5pt] node[right,pos=0.8] {\small $Q_4$} (F2.south east);
\draw[white] (I.center) -- node[sloped,inner sep=0cm,above,pos=.5, anchor=north east, minimum height=0.3cm,minimum width=1cm](F3){}(9.6,8.8) -- (C.center);
\path[gray] (F3.north east) edge[-stealth,shorten <=0.5pt] node[right,pos=0.8] {\small $Q_4$} (F3.south east);
\draw[gray] plot[smooth] coordinates {(B) (H) (I) (C)};
\draw plot[smooth] coordinates {(A) (E) (F) (G) (B)};
\draw[gray] (C.center) -- (D.center);
\node[circle,fill=black,inner sep=0pt,minimum size=2pt,label=above right:{\small {$P$}}] (C1) at (10.89392,10.5121){};
\begin{scope}[densely dotted,decoration={
    markings,
    mark=at position 0.5 with {\arrow{stealth}}}
    ] 
    \draw[postaction={decorate}] (5,7) to (8.49,7);
\end{scope}
\node[circle,fill=black,inner sep=0pt,minimum size=2pt,label=right:{\small {$x_2$}}] (X2) at (8.49,7){};
\node[circle,fill=black,inner sep=0pt,minimum size=2pt,label=left:{\small {$x_1$}}] (X11) at (4.9,7){};
\end{tikzpicture}
\hspace{0.6cm}
\begin{tikzpicture}[scale=0.6]
\pgfsetlinewidth{0.8pt}
\node[white] (A) at (0,3) {};
\node[white] (B) at (5,3) {};
\node[white] (C) at (5,0) {};
\draw(0.66,1.97) node{$\gac$};
\draw[gray](A.center) -- node[sloped,inner sep=0cm,above,pos=3.5, anchor=south east, minimum height=0.3cm,minimum width=1cm](F1){}(1.25,3) -- (B.center);
\path[gray] (F1.south east) edge[-stealth,shorten <=0.5pt] node[above,pos=0.9] {\small $\overline{Q_2}$} (F1.north east);
\draw[gray](5.5,2.57) node{$\ga$};
\draw(C.center) -- node[sloped,inner sep=0cm,above,pos=0.3, anchor=north east, minimum height=0.3cm,minimum width=1cm](F2){}(2.5,1.5) --(A.center);
\path(F2.north east) edge[-stealth,shorten <=0.5pt] node[below,pos=0.9] {\small $Q_3$} (F2.south east);
\draw[gray] (B.center) -- node[sloped,inner sep=0cm,above,pos=1.5, anchor=south west, minimum height=0.3cm,minimum width=1cm](F3){}(5,1.5) --(C.center);
\path[gray] (F3.south west) edge[-stealth,shorten <=0.5pt] node[right,pos=0.9] {\small $\overline{Q_4}$} (F3.north west);
\begin{scope}[densely dotted,decoration={
    markings,
    mark=at position 0.6 with {\arrow{stealth}}}
    ] 
    \draw[postaction={decorate}] (2.5,3) to (2.5,1.51);
\end{scope}
\begin{scope}[densely dotted,decoration={
    markings,
    mark=at position 0.5 with {\arrow{stealth}}}
    ] 
    \draw[postaction={decorate}] (2.5,1.51) to (5,1.51);
\end{scope}
\node[circle,fill=black,inner sep=0pt,minimum size=2pt,label=right:{\small {$x_2$}}] (X2) at (5,1.51){};
\node[circle,fill=black,inner sep=0pt,minimum size=2pt,label=above:{\small {$x_1$}}] (X1) at (2.5,3){};
\end{tikzpicture}
\caption{The eigenfunction $\phi$ would simultaneously be vanishing on $\ga$ and strictly increasing along both pictured paths.}
\label{fig:DomainWithPaths}
\end{figure}
In particular, $\phi$ is strictly increasing along this path. On the other hand, $\phi (x_1) = 0 = \phi (x_2)$, a contradiction. Thus we have proven \eqref{eq:crucialIneq}. In particular, we have shown the inequality \eqref{eq:mainEVin}.

To summarize, we have shown that $u \in \cH_\Gamma$ is a minimizer of \eqref{eq:minMaxEv1OneForm3} if and only if $u = \nabla \psi$ for some non-trivial $\psi \in \ker (- \Delta_{\Gamma^c} - \lambda_1^{\Gamma^c})$, and that in this case both components of $\nabla \psi$ are strictly positive. Hence, the eigenfunction $\psi$ is strictly increasing in $\Omega$ in the directions of both $\ee_1$ and $\ee_2$. This completes the proof.
\end{proof}

Eigenvalue inequalities of the form \eqref{eq:mainEVin} for Laplacians with mixed boundary conditions were obtained earlier by the authors of this article in \cite{AR23} using classical variational principles. Theorem~3.1 in \cite{AR23} requires the stronger assumptions that $\Omega$ is convex and $\Gamma^c$ is a straight line segment. On the other hand, it does not require $\Gamma$ to contain at least one corner, while the assumption on the exterior unit normals of $\Gamma$ in Theorem \ref{thm:MainResult} forces $\Gamma$ to contain one corner with interior angle at most $\pi/2$ at which the exterior normal changes between the second and fourth quadrant.

The next example shows that the inequality \eqref{eq:mainEVin} no longer needs to hold once the interior angles at the end points of $\Gamma$ are equal to $\pi/2$.

\begin{example}
Let $\Omega = (0, \pi) \times (0, \pi)$ and let $\Gamma$ consist of the sides $y = \pi$ and $x = \pi$ while $\Gamma^c$ consists of the sides $y = 0$ and $x = 0$. Then the interior angles of $\partial \Omega$ at the end points of $\Gamma$ equal $\pi/2$ while all further assumptions of Theorem \ref{thm:MainResult} are satisfied. However, the eigenvalue inequality \eqref{eq:mainEVin} fails since $\lambda_1^{\Gamma^c} = \frac{1}{2} = \lambda_1^\Gamma$ as one computes easily by separating variables.
\end{example}

The following corollary concerns the hot spots property for the first eigenfunction of the operator $-\Delta_{\gac}$. 

\begin{corollary}
\label{cor:MonotonicityEF}
Assume that Hypothesis \ref{hyp2} is satisfied. In addition, assume that $\nu(x) \in \overline{Q_3}$ for almost all $x \in \gac$ and $\nu(x) \in \overline{Q_2} \cup \overline{Q_4}$ for almost all $x \in \ga$. Then the eigenfunction of $-\Delta_{\gac}$ corresponding to its first eigenvalue $\lambda_1^{\Gamma^c}$, chosen positive in $\Omega$, takes its maximum only on $\Gamma$. If 
$\Gamma$ does not contain any axioparallel segment, then the point in $\overline \Omega$ at which the eigenfunction takes its maximum is unique and coincides with the corner $P$ at which the normal vector makes a jump between the quadrants $Q_2$ and $Q_4$.
\end{corollary}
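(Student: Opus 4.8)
The plan is to read off everything from Theorem \ref{thm:MainResult}, which (after the rotation is fixed as in the statement) already gives the decisive fact that the positive eigenfunction $\psi_1$ of $-\Delta_{\Gamma^c}$ satisfies $\partial_1 \psi_1 > 0$ and $\partial_2 \psi_1 > 0$ everywhere in $\Omega$. First I would record that $\psi_1 \in H^2(\Omega)$ by Proposition \ref{prop:RegularityAngles}, hence $\psi_1 \in C(\overline\Omega)$ by the two-dimensional Sobolev embedding, and that $\psi_1$ is smooth up to each smooth boundary arc away from the corners by elliptic regularity. For the first assertion, strict monotonicity in the direction $\ee_1$ (or $\ee_2$) rules out any interior local maximum, so the maximum of the continuous function $\psi_1$ on the compact set $\overline\Omega$ is attained on $\partial\Omega$; since $\psi_1$ vanishes on $\Gamma^c$ and, by continuity, on $\overline{\Gamma^c}$ (which contains the two junction points), while the maximal value is positive, the maximum can only be attained on the relatively open set $\Gamma$.

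For the uniqueness statement I would study $\psi_1$ along $\Gamma$, parametrized by arc length in the positive direction, and split $\Gamma$ into the part $\Gamma_2$ where $\nu \in Q_2$ and the part $\Gamma_4$ where $\nu \in Q_4$; the hypothesis that $\Gamma$ contains no axioparallel segment guarantees that on each smooth arc $\nu$ lies strictly in one open quadrant. Writing $\tau = (-\nu_2, \nu_1)$, the tangential derivative $\partial_\tau \psi_1 = \tau_1 \partial_1 \psi_1 + \tau_2 \partial_2 \psi_1$ has $\tau_1, \tau_2 < 0$ on $\Gamma_2$ and $\tau_1, \tau_2 > 0$ on $\Gamma_4$, so together with $\partial_1 \psi_1, \partial_2 \psi_1 \ge 0$ (valid up to the smooth boundary by continuity) one gets $\partial_\tau \psi_1 \le 0$ on $\Gamma_2$ and $\partial_\tau \psi_1 \ge 0$ on $\Gamma_4$. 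The crucial step is to make these inequalities strict. If $\partial_\tau \psi_1(x_0) = 0$ at a smooth point $x_0 \in \Gamma$, then together with the Neumann condition $\partial_\nu \psi_1 = 0$ this forces $\nabla \psi_1(x_0) = 0$. To exclude this I apply Hopf's lemma to $w_j := \partial_j \psi_1$, which solves $-\Delta w_j = \lambda_1^{\Gamma^c} w_j$ with $w_j > 0$ in $\Omega$ and is therefore superharmonic: if $w_1(x_0) = w_2(x_0) = 0$ at a smooth boundary point (where the interior-ball condition holds by smoothness and the absence of inward corners), Hopf gives $(H\nu)_j = \partial_\nu w_j(x_0) < 0$ for $j = 1, 2$, where $H$ denotes the Hessian of $\psi_1$ at $x_0$; on the other hand, differentiating $\nabla \psi_1 \cdot \nu = 0$ tangentially at $x_0$ yields $H\nu \cdot \tau = 0$, so $H\nu$ is a scalar multiple of $\nu$. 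Since $\nu \in Q_2$ or $\nu \in Q_4$ while $H\nu \in Q_3$, no such scalar multiple exists, a contradiction. Hence $\partial_\tau \psi_1 < 0$ on $\Gamma_2$ and $\partial_\tau \psi_1 > 0$ on $\Gamma_4$, so $\psi_1$ is strictly decreasing along $\Gamma_2$ and strictly increasing along $\Gamma_4$; in particular the only interior local maxima of $\psi_1|_\Gamma$ occur at corners where $\nu$ jumps from $Q_4$ to $Q_2$.

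It remains to see that there is exactly one such corner. Here I would argue topologically: the generalized Gauss map of the simple closed curve $\partial\Omega$, obtained by inserting at each corner the arc of directions swept by its normal cone, has winding number $+1$ because $\Omega$ is simply connected. The smooth arcs of $\Gamma$ avoid the open quadrant $Q_1$ (no axioparallel segment) and those of $\Gamma^c$ lie in $\overline{Q_3}$, while every corner is convex (no inward corners) and hence sweeps its arc counterclockwise. A corner at which $\nu$ jumps from $Q_4$ to $Q_2$ sweeps across all of $Q_1$, and any corner whose arc meets $Q_1$ is of this kind; since there are no clockwise passages through $Q_1$, the winding number $+1$ forces precisely one such corner, namely $P$. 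Combining this with the previous paragraph — all other corners of $\Gamma$ are local minima of $\psi_1|_\Gamma$, $\psi_1$ is strictly monotone on the smooth arcs, and $\psi_1 = 0$ at the two endpoints of $\Gamma$ — shows that $P$ is the unique maximizer of $\psi_1$ on $\Gamma$, and hence on $\overline\Omega$.

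The hard part is the strict monotonicity along the smooth parts of $\Gamma$, i.e.\ excluding critical points $\nabla \psi_1 = 0$ on the Neumann boundary. The Hopf-plus-Neumann computation is where the hypotheses are used most delicately: it needs the interior-ball condition (from the smoothness of the arcs and the absence of inward corners) and the precise matching of the sign of the Hessian against the quadrant containing $\nu$, and it is exactly here that the no-axioparallel assumption is indispensable — on an axioparallel arc one component of $\nabla \psi_1$ vanishes identically by the Neumann condition, Hopf no longer produces a contradiction, and the maximum may then be attained along an entire segment, which is why uniqueness can fail in that case. A secondary technical point is to make the winding-number argument for the uniqueness of $P$ rigorous using the generalized Gauss map at the corners.
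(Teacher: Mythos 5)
Your argument is correct, but the uniqueness part takes a genuinely different route from the paper. The paper stays in the interior: it reuses the strict monotonicity of $\psi_1$ in the directions $\ee_1,\ee_2$ from Theorem \ref{thm:MainResult}, observes that there is at most one corner where $\nu$ jumps between $Q_2$ and $Q_4$ (else $\Gamma$ would have an inward-pointing corner), and then joins any $x\in\Gamma$, $x\neq P$, to $P$ by an axioparallel staircase path through $\Omega$ pointing in the directions $\ee_1$ and $\ee_2$, along which $\psi_1$ strictly increases; together with continuity up to the boundary this gives $\psi_1(x)<\psi_1(P)$ in two lines. You instead work directly on the boundary: you show $\partial_\tau\psi_1\neq 0$ at every smooth point of $\Gamma$ by applying Hopf's lemma to the positive superharmonic functions $w_j=\partial_j\psi_1$ and playing the resulting condition $H\nu\in Q_3$ against $H\nu\parallel\nu$ obtained by differentiating the Neumann condition, and then you locate the unique local maximum of $\psi_1|_\Gamma$ via a winding-number count of the corners. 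This costs more (boundary regularity of $\psi_1$ up to the smooth arcs, Hopf's lemma, and a Gauss-map argument that still needs to be made fully rigorous — though it is no less rigorous than the paper's own one-line justification of the uniqueness of $P$), but it buys strictly more information: strict monotonicity of $\psi_1$ along each smooth arc of $\Gamma$ and the absence of critical points of $\psi_1$ on the smooth part of the Neumann boundary. Two small points to tidy up: the absence of axioparallel segments does not force $\nu$ to lie in an \emph{open} quadrant everywhere on a smooth arc of $\Gamma$ — it may touch a coordinate axis at isolated points — but your Hopf contradiction survives there, since $H\nu$ lies in the open quadrant $Q_3$ while a scalar multiple of an axis-parallel $\nu$ has a vanishing component; and the orientation of the jump at $P$ ($Q_4$ to $Q_2$ versus $Q_2$ to $Q_4$) should be fixed consistently with the positive parametrization of $\partial\Omega$.
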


\begin{proof}
By Theorem \ref{thm:MainResult} the eigenfunction is strictly increasing in $\Omega$ in the directions of $\ee_1$ and $\ee_2$. In particular, as for each point $x \in \Omega$ the halfline $\{x + t \ee_1 : t > 0\}$ intersects $\Gamma$, it follows that the maximum of the eigenfunction is attained only in $\Gamma$. If $\Gamma$ does not contain any axioparallel line segment, then $\Gamma$ contains points $x_1, x_2$ such that $\nu (x_1) \in Q_2$ and $\nu (x_2) \in Q_4$, and there cannot exist more than one corner at which $\nu$ makes a jump between $Q_2$ and $Q_4$; otherwise $\Gamma$ needs to contain an inward-pointing corner. If $P$ denotes the unique corner at which $\nu$ jumps from $Q_2$ to $Q_4$, for any $x \in \Gamma$, $x \neq P$, there exists a path through $\Omega$ from $x$ to $P$ consisting of axioparallel line segments pointing into the directions of either $\ee_1$ or $\ee_2$, and the eigenfunction is strictly increasing along this path. Hence its unique maximum is attained at $P$.
\end{proof}

According to Corollary \ref{cor:MonotonicityEF}, in the case of the first domain in Figure \ref{fig:DomainWithPaths} the eigenfunction corresponding to $\lambda_1^{\gac}$ takes its maximum at the point $P$ only.

Next, we apply Theorem \ref{thm:MainResult} and Corollary \ref{cor:MonotonicityEF} to the recently much studied case of triangles.

\begin{example}
Assume that $\Omega$ is an acute triangle, $\Gamma^c$ is one of its sides and $\Gamma$ consists of the remaining two sides. Further, assume that $\Omega$ is rotated such that $\Gamma$ and $\Gamma^c$ satisfy the condition on the normal vectors required in Theorem \ref{thm:MainResult} and such that $\Gamma$ contains no axioparallel part; this is always possible as long as the triangle is acute, see Figure~\ref{fig:AcuteTriangle}.
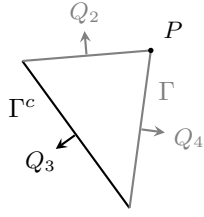
\begin{figure}[h]
\begin{tikzpicture}
\pgfsetlinewidth{0.8pt}
\node[white] (A) at (1,3) {};
\node[white] (B) at (2.41,1.05) {};
\node[white] (C) at (2.69,3.14) {};
\draw(1,2.38) node{$\gac$};
\draw(A.center) -- node[sloped,inner sep=0cm,above,pos=1, anchor=north east, minimum height=0.3cm,minimum width=1cm](F1){}(1.705,2.025) -- (B.center);
\path (F1.north east) edge[-stealth,shorten <=0.5pt] node[below left,pos=0.5] {\small $Q_3$} (F1.south east);
\draw[gray](2.9,2.6) node{$\ga$};
\draw[gray] (C.center) -- node[sloped,inner sep=0cm,above,pos=1, anchor=south east, minimum height=0.3cm,minimum width=1cm](F2){}(1.845,3.07) --(A.center);
\path[gray] (F2.south east) edge[-stealth,shorten <=0.5pt] node[above,pos=0.9] {\small $Q_2$} (F2.north east);
\draw[gray] (B.center) -- node[sloped,inner sep=0cm,above,pos=1, anchor=north east, minimum height=0.3cm,minimum width=1cm](F3){}(2.55,2.095) --(C.center);
\path[gray] (F3.north east) edge[-stealth,shorten <=0.5pt] node[left,pos=3] {\small $Q_4$} (F3.south east);
\node[circle,fill=black,inner sep=0pt,minimum size=2pt,label=above right:{\small {$P$}}] (C) at (2.69,3.14){};
\end{tikzpicture}
\caption{The first eigenfunction of $-\Delta_{\gac}$ takes its maximum at $P$.}
\label{fig:AcuteTriangle}
\end{figure}
Then the first eigenfunction $\psi_1$ of $- \Delta_{\Gamma^c}$ is strictly increasing in the directions of $\ee_1$ and $\ee_2$, implying that $\psi_1$, chosen positive, takes its maximum at a unique point, namely the corner inside the Neumann part $\Gamma$; cf.\ Corollary \ref{cor:MonotonicityEF}. This observation is slightly more general than Theorem 1.3 in the recent preprint \cite{LY24p} and Theorem 1.1 of \cite{H24p} when applied to the acute case. The statement on the only maximum lying at the Neumann vertex confirms Corollary 1.6 (3) of \cite{LY24p}.
\end{example}

\appendix

\section{A Helmholtz type decomposition}
\label{sec:appendixHH}

In this appendix we provide an elementary proof for a Helmholtz type orthogonal decomposition of the space $L^2 (\Omega; \C^2)$ which is suitable for mixed boundary conditions. A proof in the language of differential forms can be found in \cite{BPS19}. Recall the notation 
\begin{align*}
 H_{0, \Sigma}^1 (\Omega) = \left\{ \psi \in H^1 (\Omega) : u |_\Sigma = 0 \right\}
\end{align*}
for any non-empty, relatively open set $\Sigma \subset \partial \Omega$. In the proof of the following theorem we will make use of the Poincar\'e inequality
\begin{align}\label{eq:Poincare}
 \int_\Omega |\nabla \psi|^2 \geq c \int_\Omega |\psi|^2, \quad \psi \in H_{0, \Sigma}^1 (\Omega),
\end{align}
where the optimal constant $c$ equals the eigenvalue $\lambda_1^\Sigma$.

\begin{theorem}
\label{thm:HelmholtzDecGeneral}
Assume that Hypothesis \ref{hyp1} is satisfied. Then the orthogonal decomposition 
\begin{equation}
\label{eq:HelmholtzDecGeneral}
 L^2(\Omega; \mathbb{C}^2) = \nabla \hmixc \oplus \nabla^{\perp} \hmix \oplus H_c
\end{equation}
holds, where 
\begin{equation*}
 H_c = \left\{ u \in L^2 (\Omega; \C^2) : \diver{u} = \omega(u) = 0,  u |_{\Gamma} \cdot \nu = 0, u |_{\Gamma^c} \cdot \tau = 0 \right\}.
\end{equation*}
\end{theorem}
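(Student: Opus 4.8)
The plan is to realize \eqref{eq:HelmholtzDecGeneral} as the orthogonal decomposition of $L^2(\Omega;\mathbb{C}^2)$ associated with the closed subspace $W := \nabla\hmixc \oplus \nabla^\perp\hmix$ and its orthogonal complement, and then to identify $W^\perp$ with the explicitly given space $H_c$. Write $V_1 := \nabla\hmixc$ and $V_2 := \nabla^\perp\hmix$. First I would verify that $V_1$ and $V_2$ are closed: the Poincar\'e inequality \eqref{eq:Poincare} (with positive constant $\lambda_1^{\gac}$, resp.\ $\lambda_1^\ga$) shows that $\psi\mapsto\nabla\psi$ on $\hmixc$ and $\phi\mapsto\nabla^\perp\phi$ on $\hmix$ are bounded below, since $\|\nabla\psi\|_{L^2}^2 \ge \frac{\lambda_1^{\gac}}{1+\lambda_1^{\gac}}\|\psi\|_{H^1}^2$ and $\|\nabla^\perp\phi\|_{L^2}=\|\nabla\phi\|_{L^2}$; as $\hmixc$ and $\hmix$ are complete (being closed in $H^1(\Omega)$), these maps have closed range, so $V_1$ and $V_2$ are closed.

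Next I would establish $V_1\perp V_2$. For $\psi\in\hmixc$ and $\phi\in\hmix$, applying \eqref{eq:PIomega} to $u=\nabla\psi\in H(\omega;\Omega)$ and using $\omega(\nabla\psi)=0$ gives $\int_\Omega\nabla\psi\cdot\nabla^\perp\phi = (\nabla\psi|_{\bnd}\cdot\tau,\phi|_{\bnd})_{\bnd}=(\partial_\tau\psi|_{\bnd},\phi|_{\bnd})_{\bnd}$. Since $\psi|_{\gac}=0$, Lemma \ref{lem:TangentialDerivative} yields $\partial_\tau\psi|_{\gac}=0$, i.e.\ the distribution $\partial_\tau\psi$ vanishes on $\gac$; as $\phi\in\hmix = H^1_{0,\partial\Omega\setminus\overline{\gac}}(\Omega)$, the pairing vanishes. (The approximation argument in the proof of Lemma \ref{lem:TangentialDerivative} applies to $\psi$ without requiring $\Delta\psi\in L^2$, because $\nabla^\perp\psi$ always lies in $H(\diver;\Omega)$.) Consequently $W = V_1\oplus V_2$ is an orthogonal, and hence closed, sum, and $L^2(\Omega;\mathbb{C}^2) = W \oplus W^\perp$.

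It remains to show $W^\perp = H_c$, which is the heart of the argument. A field $w\in L^2(\Omega;\mathbb{C}^2)$ lies in $W^\perp$ if and only if $\int_\Omega w\cdot\nabla\psi = 0$ for all $\psi\in\hmixc$ and $\int_\Omega w\cdot\nabla^\perp\phi = 0$ for all $\phi\in\hmix$. Testing the first family against $\psi\in C_0^\infty(\Omega)\subset\hmixc$ forces $\diver w = 0$ distributionally, so $w\in H(\diver;\Omega)$; then \eqref{eq:PI} reduces the condition to $(w|_{\bnd}\cdot\nu,\psi|_{\bnd})_{\bnd}=0$ for all $\psi\in\hmixc = H^1_{0,\partial\Omega\setminus\overline{\ga}}(\Omega)$, which is exactly $w|_\ga\cdot\nu = 0$. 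Symmetrically, testing the second family against $C_0^\infty(\Omega)$ and then using \eqref{eq:PIomega} gives $\omega(w)=0$ and $w|_{\gac}\cdot\tau = 0$. Hence $w\in W^\perp$ precisely when $w\in H_c$, and \eqref{eq:HelmholtzDecGeneral} follows.

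The main obstacle is the careful bookkeeping of the weakly defined normal and tangential traces: one must use the identifications $\partial\Omega\setminus\overline{\ga} = \gac$ and $\partial\Omega\setminus\overline{\gac} = \ga$ to match the weak vanishing conditions $w|_\ga\cdot\nu=0$ and $w|_{\gac}\cdot\tau=0$ with the correct test spaces $\hmixc$ and $\hmix$, and to ensure that the duality pairings over $\bnd$ split as claimed. The only analytic input beyond the integration-by-parts formulas is the vanishing of the tangential trace of $\psi\in\hmixc$ along $\gac$ (needed for $V_1\perp V_2$) together with the Poincar\'e inequality guaranteeing closedness; everything else follows directly from \eqref{eq:PI} and \eqref{eq:PIomega}.
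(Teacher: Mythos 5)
Your proposal is correct and follows essentially the same route as the paper's proof in Appendix A: closedness of the gradient spaces via the Poincar\'e inequality, mutual orthogonality via the integration-by-parts formulas \eqref{eq:PI}--\eqref{eq:PIomega} together with the weak vanishing of the tangential derivative, and identification of the remaining summand by testing first against $C_0^\infty(\Omega)$ and then against the full spaces $\hmixc$ and $\hmix$. The only (harmless) organizational difference is that you characterize $H_c$ directly as $\bigl(\nabla\hmixc\oplus\nabla^\perp\hmix\bigr)^\perp$, which yields its closedness and orthogonality for free, whereas the paper verifies these two properties of $H_c$ separately before showing the complement is contained in $H_c$.
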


\begin{proof}
Let us first show that each of the spaces on the right-hand side of \eqref{eq:HelmholtzDecGeneral} is a closed subspace of $L^2 (\Omega; \C^2)$. We first prove that $\nabla \hmixc$ is closed. Let $\psi_n \in H_{0, \Gamma^c}^1 (\Omega)$ such that $(\nabla \psi_n)_n$ is a Cauchy sequence in $L^2 (\Omega; \C^2)$, that is,
\begin{equation*}
 \int_{\om} \abs{\nabla \psi_n - \nabla \psi_m}^2 \to 0
\end{equation*}
as $m, n \to +\infty$. Then there exists an element $u \in \ldd$ such that $\nabla \psi_n \to u$ in $\ldd$. Moreover, by the Poincar\'e inequality \eqref{eq:Poincare},
\begin{equation*}
 \int_{\om} \abs{\psi_n - \psi_m}^2 \leq \frac{1}{c} \int_{\om} \abs{\nabla \psi_n - \nabla \psi_m}^2 \to 0.
\end{equation*}
Thus there exists $\psi \in L^2 (\Omega)$ such that $\psi_n \to \psi$ in $L^2 (\Omega)$. Furthermore, for each $\phi \in C_0^\infty (\Omega)$ we have
\begin{align*}
 \int_\Omega \psi \partial_j \phi = \lim_{n \to \infty} \int_\Omega \psi_n \partial_j \phi = - \lim_{n \to \infty} \int_\Omega \partial_j \psi_n \phi = - \int_\Omega u_j \phi, \quad j = 1, 2,
\end{align*}
implying $\psi \in H^1 (\Omega)$ and $\nabla \psi = u$ in $L^2 (\Omega; \C^2)$. We have therefore $\psi_n \to \psi$ in $H^1 (\Omega)$, and as $H_{0, \Gamma^c}^1 (\Omega)$ is complete, it follows $\psi \in H_{0, \Gamma^c}^1 (\Omega)$. Hence $\nabla H_{0, \Gamma^c}^1 (\Omega)$ is complete. The proof of the completeness of $\nabla^\perp H_{0, \Gamma}^1 (\Omega)$ is analogous.

Now consider $u^n \in H_c$ such that $(u^n)_n$ is a Cauchy sequence in $L^2 (\Omega; \C^2)$. Since $\diver u^n = \omega (u^n) = 0$ for all $n$, $(u^n)_n$ is a Cauchy sequence in both $H (\diver; \Omega)$ and $H (\omega; \Omega)$. Hence there exists some $u \in H (\diver; \Omega) \cap H (\omega; \Omega)$ such that $(u^n)_n$ converges to $u$ in both $H (\diver; \Omega)$ and $H (\omega; \Omega)$. In particular, by continuity, $\diver{u}=\omega(u)=0$ follows. Since the tangent and normal traces are continuous mappings from $H (\omega; \Omega)$ or $H (\diver; \Omega)$, respectively, into $H^{-1/2}(\partial \Omega)$, the identities $u |_\Gamma \cdot \nu = 0$ and $u |_{\Gamma^c} \cdot \tau = 0$ follow. Hence $H_c$ is complete.

Let us prove next that the spaces on the right-hand side of \eqref{eq:HelmholtzDecGeneral} are mutually orthogonal. For this let first $\psi \in H_{0, \Gamma^c}^1 (\Omega)$ and $\phi \in H_{0, \Gamma}^1 (\Omega)$. Integration by parts \eqref{eq:PI} gives
\begin{align*}
 \int_{\om} u \cdot v = \int_{\om} \nabla \psi \cdot \nabla^{\perp} \phi = - \int_{\om} \psi \, \overline{\diver \nabla^\perp \phi} - \left( \psi |_{\bnd}, \nabla \phi |_{\partial \Omega} \cdot \tau \right)_{\bnd} = 0
\end{align*}
due to the boundary conditions of $\psi$ and $\phi$, and $\diver \nabla^\perp \phi= 0$. Now let in addition $u \in H_c$. Then
\begin{align*}
 \int_{\om} u \cdot \left( \nabla \psi + \nabla^\perp \phi \right) & = - \int_{\om} \diver{u} \, \overline{\psi} + \left(u |_{\bnd} \cdot \nu, \psi |_{\partial \Omega} \right)_{\bnd} \\
 & \quad - \int_{\om} \omega (u) \, \overline{\phi} + \left(u |_{\bnd} \cdot \tau, \phi |_{\partial \Omega} \right)_{\bnd} = 0.
\end{align*}
Thus $\nabla H_{0, \Gamma^c}^1 (\Omega)$ and $\nabla^\perp H_{0, \Gamma}^1 (\Omega)$ are orthogonal to each other and both spaces are orthogonal to $H_c$.

To establish \eqref{eq:HelmholtzDecGeneral} it remains to show that each $u \in L^2 (\Omega; \C^2)$ which is orthogonal to $\nabla \hmixc \oplus \nabla^\perp \hmix$ belongs to $H_c$. Indeed, for such $u$ we have 
\begin{align*}
 \int_\Omega u \cdot \nabla \psi = 0 = \int_\Omega u \cdot \nabla^\perp \phi
\end{align*}
for all $\psi, \phi \in C_0^\infty (\Omega)$, implying $\diver u = 0 = \omega (u)$. In particular, $u |_{\partial \Omega} \cdot \nu, u |_{\partial \Omega} \cdot \tau \in H^{- 1/2} (\partial \Omega)$ are well-defined. Furthermore, for each $\psi \in H_{0, \Gamma^c}^1 (\Omega)$,
\begin{align*}
 0 = \int_\Omega u \cdot \nabla \psi = \left(u |_{\partial \Omega} \cdot \nu, \psi |_{\partial \Omega} \right)_{\partial \Omega},
\end{align*}
giving $u |_\Gamma \cdot \nu = 0$. Analogously,
\begin{align*}
 0 = \int_\Omega u \cdot \nabla^\perp \phi = \left(u |_{\partial \Omega} \cdot \tau, \phi |_{\partial \Omega} \right)_{\partial \Omega}
\end{align*}
holds for all $\phi \in H_{0, \Gamma}^1 (\Omega)$, giving $u |_{\Gamma^c} \cdot \tau = 0$. Thus we have shown $u \in H_c$. This completes the proof.
\end{proof}

We conclude this appendix with an observation on the space $H_c$. In Example \ref{ex:square} we have seen that $H_c$ may be non-trivial if $\ga$ is not connected. In fact, the ideas of the proof of Proposition \ref{prop:ThirdSpaceTrivial} may be used to estimate the dimension of $H_c$ and, hence, the dimension of the kernel of the operator $A$ in Section \ref{sec:sec3}; this is exemplified in the following proposition.

\begin{proposition}
\label{prop:dimHC}
Let $\Omega \subset \R^2$ be a bounded, simply connected Lipschitz domain whose boundary consists of finitely many $C^\infty$-smooth arcs and has no inward-pointing corners. Let $\Gamma, \Gamma^c \subset \bnd$ be non-empty, relatively open subsets of the boundary such that $\Gamma \cap \Gamma^c = \emptyset$ and $\overline{\Gamma \cup \Gamma^c} = \partial \Omega$, and such that the interior angles at all points where $\Gamma$ and $\Gamma^c$ meet are strictly below $\pi/2$. Assume that $\ga$ consists of two connected components. Then 
\begin{equation*}
\dim{H_c} \le 1.
\end{equation*}
\end{proposition}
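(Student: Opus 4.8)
The plan is to produce an injective linear functional $L : H_c \to \mathbb{C}$, which then forces $\dim H_c \le \dim \mathbb{C} = 1$. This functional should record the ``potential jump'' of an element of $H_c$ across the two components of $\Gamma^c$, and injectivity will be reduced to the vanishing computation already performed in Proposition \ref{prop:ThirdSpaceTrivial}. First I would set up the scalar potential exactly as there: given $u \in H_c$, simple connectedness of $\Omega$ together with $\omega(u)=0$ yields $f \in H^1(\Omega)$ with $u = \nabla f$, by \cite[Chapter I, Theorem 2.9]{GR}, and $f$ is unique up to an additive constant. The boundary conditions translate into $\partial_\nu f|_\Gamma = 0$ and $\partial_\tau f|_{\Gamma^c} = u|_{\Gamma^c}\cdot\tau = 0$. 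Since $\Gamma$ has two connected components and $\partial\Omega$ is topologically a circle, $\Gamma^c = \partial\Omega \setminus \overline{\Gamma}$ has \emph{exactly} two connected (relatively open) components $\Gamma^c_1, \Gamma^c_2$; on each of them the vanishing of the tangential derivative forces $f$ to be constant, say $f|_{\Gamma^c_j} = c_j$, exactly as in the connected case of Proposition \ref{prop:ThirdSpaceTrivial}. I then set $L(u) := c_2 - c_1$. This number is unaffected by the additive constant in $f$ and depends linearly on $u$, so $L$ is a well-defined $\mathbb{C}$-linear functional on $H_c$.

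The heart of the matter is injectivity of $L$. Suppose $L(u)=0$, i.e.\ $c_1 = c_2$. Replacing $f$ by $f - c_1$ we may assume $f|_{\Gamma^c} = 0$, so that $f \in H^1_{0,\Gamma^c}(\Omega)$, while still $u = \nabla f$. Then the integration by parts formula \eqref{eq:PI}, applied with the vector field $u$ and the test function $f$, gives
\[
 \int_\Omega |u|^2 = \int_\Omega u \cdot \nabla f = - \int_\Omega \diver u \, \overline{f} + \big( u|_{\partial\Omega}\cdot\nu, f|_{\partial\Omega} \big)_{\partial\Omega} = 0,
\]
since $\diver u = 0$ and the boundary pairing vanishes because $u|_\Gamma\cdot\nu = 0$ while $f|_{\Gamma^c}=0$; this is verbatim the computation concluding the proof of Proposition \ref{prop:ThirdSpaceTrivial}. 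Hence $u = 0$, so $\ker L = \{0\}$ and $\dim H_c \le 1$.

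The only genuinely new ingredient relative to Proposition \ref{prop:ThirdSpaceTrivial} is the passage from ``$f$ constant on a connected $\Gamma^c$'' to ``$f$ constant on each of the two components of $\Gamma^c$'', together with the remark that the difference of the two boundary constants is a linear invariant of $u$; everything else is reused. Accordingly, the step needing the most care is the topological claim that two components of $\Gamma$ force exactly two components of $\Gamma^c$ (for which the hypotheses ensure that the transitions occur at finitely many genuine corners), and the verification that $L$ is well defined and linear. I expect no serious obstacle here, and Example \ref{ex:square} exhibits a nonzero element of $H_c$ whose potential has distinct boundary constants, showing that the bound $\dim H_c \le 1$ is sharp.
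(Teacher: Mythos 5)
Your proof is correct, but it takes a genuinely different route from the paper's. The paper argues by contradiction: it takes two orthonormal elements $u,v \in H_c$, invokes the regularity argument of Lemma \ref{lem:formDomain} to get $u,v \in H^1(\Omega;\C^2)$, introduces for each of them \emph{both} potentials ($u=\nabla f$, $u^\perp = \nabla g$, and likewise $v=\nabla p$, $v^\perp=\nabla q$, all in $H^2(\Omega)$), and then evaluates $\int_\Omega u\cdot v$, $\int_\Omega |u|^2$, $\int_\Omega |v|^2$ via the fundamental theorem of calculus along the arcs of $\Gamma^c$, expressing each as a product of boundary constants; the orthonormality forces one of these products to vanish, contradicting the normalization. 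You instead build the single linear functional $L(u)=c_2-c_1$ from the potential $f$ alone and prove $\ker L=\{0\}$ by exactly the integration-by-parts computation that closes the proof of Proposition \ref{prop:ThirdSpaceTrivial}. This is more economical in two respects: it needs only one potential per vector field, and it bypasses the $H^2$-regularity (hence the smoothness of the arcs and the angle condition at the Dirichlet--Neumann junctions), since constancy of $f$ on each component of $\Gamma^c$ already follows from the weak vanishing of the tangential trace, just as in Proposition \ref{prop:ThirdSpaceTrivial}. So your argument in fact establishes the bound under Hypothesis \ref{hyp1} together with simple connectedness and the assumption that $\Gamma$ (hence $\Gamma^c$) has two components --- a mild strengthening of the stated proposition. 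What the paper's approach buys in exchange is explicit formulas such as $\int_\Omega|u|^2=-a_2\overline{a_1}$ linking the norm to the boundary constants of both potentials, which make the mechanism behind Example \ref{ex:square} transparent; your closing remark that the square realizes $\dim H_c=1$ recovers the sharpness either way. The only steps deserving a word of care in a written version are the ones you flag yourself: that $\Gamma^c=\partial\Omega\setminus\overline{\Gamma}$ indeed has exactly two components (the paper asserts the same without proof), and that $f\in H^1_{0,\Gamma^c}(\Omega)$ suffices to kill the duality pairing $\bigl(u|_{\partial\Omega}\cdot\nu, f|_{\partial\Omega}\bigr)_{\partial\Omega}$, which is precisely how $u|_\Gamma\cdot\nu=0$ is defined.
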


\begin{proof}
Let us assume for a contradiction that there exist $u,v \in H_c$ such that $\int_\Omega |u|^2 = 1 = \int_\Omega |v|^2$ and $\int_\Omega u \cdot v = 0$. Under the assumptions of the proposition we can argue as in the proof of Lemma \ref{lem:formDomain} to show that $u,v \in H^1(\Omega; \C^2)$ and thus $u|_{\bnd} \cdot \nu, u|_{\bnd} \cdot \tau, v|_{\bnd} \cdot \nu, v|_{\bnd} \cdot \tau \in L^2(\bnd)$. Since $\om$ is simply connected, $\omega(u) = 0$ in $\Omega$ implies that there exists $f \in H^2 (\Omega)$ such that $u = \nabla f$, see \cite[Chapter I, Theorem 2.9]{GR}, and similarly $\omega(u^\perp) = \diver{u} = 0$ in $\Omega$ implies that there exists $g \in H^2 (\Omega)$ such that $u^\perp = \nabla g$. Furthermore, we have
\begin{align}\label{eq:BCfg}
 \partial_\tau f |_{\gac} = u |_{\Gamma^c} \cdot \tau = 0, \quad \text{and} \quad \partial_\tau g |_{\ga} = u |_\Gamma \cdot \nu = 0.
\end{align}
Let us denote by $\Gamma_1$ and $\Gamma_3$ the connected components of $\Gamma$. Since $\om$ is simply connected, $\gac$ also consists of two connected components $\Gamma_2$ and $\Gamma_4$, where the enumeration is chosen following the boundary in positive direction, see Figure~\ref{fig:gamma1234}. Let us call the end points of $\Gamma_2$ ordered following positive orientation $P$ and $Q$. 
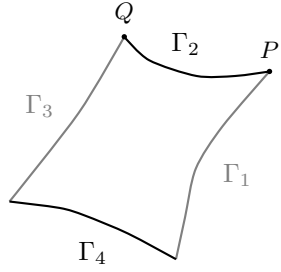
\begin{figure}[h]
\begin{tikzpicture}[scale=0.5]
\pgfsetlinewidth{0.8pt}
\node[white] (A) at (2,3) {};
\node[white] (B) at (6.38,1.47) {};
\node[white] (C) at (3.32,2.83) {};
\node[white] (D) at (3.98,2.63) {};
\node[white] (E) at (5.12,2.15) {};
\node[white] (F) at (5.02,7.37) {};
\node[white] (G) at (3.82,5.35) {};
\node[white] (H) at (8.84,6.45) {};
\node[white] (L) at (6.92,6.33) {};
\node[white] (M) at (7.94,6.33) {};
\node[white] (Q) at (5.67,6.75) {};
\node[white] (S) at (6.62,2.59) {};
\node[white] (T) at (6.9,3.87) {};
\node[white] (U) at (7.6,4.99) {};
\draw[gray] plot[smooth] coordinates {(B) (S) (T) (U) (H)};
\draw (7.36,3.7) node[gray][right]{$\Gamma_1$};
\draw plot[smooth] coordinates { (F) (Q) (L) (M) (H)};
\draw (6.64,6.62) node[black][above]{$\Gamma_2$};
\draw[gray] plot[smooth] coordinates {(F) (G) (A)};
\draw (3.44,5.54) node[gray][left]{$\Gamma_3$};
\draw plot[smooth] coordinates {(A) (C) (D) (E) (B)};
\draw (4.19,2.2) node[black][below]{$\Gamma_4$};
\node[circle,fill=black,inner sep=0pt,minimum size=2pt,label=above:{\small {$Q$}}] (F1) at (5.02,7.37){};
\node[circle,fill=black,inner sep=0pt,minimum size=2pt,label=above:{\small {$P$}}] (H1) at (8.84,6.45){};
\end{tikzpicture}
\caption{Each $\Gamma_i$ is contained in a level set of the potentials of elements of $H_c$.}
\label{fig:gamma1234}
\end{figure}
By \eqref{eq:BCfg}, $f$ is constant along $\Gamma_2$ and along $\Gamma_4$, and we can assume without loss of generality that $f|_{\ga_2} = a_2$ for some constant $a_2 \in \mathbb{C}$ and $f |_{\Gamma_4} = 0$. Analogously, we may assume that $g|_{\ga_1} = a_1$ for some $a_1 \in \C$ and $g|_{\ga_3} = 0$. The same observations hold for $v$: there exist $p,q \in H^2 (\Omega)$ such that $v = \nabla p$, $v^\perp = \nabla q$, $q|_{\ga_1} = b_1$ and $p|_{\ga_2} = b_2$ identically for some $b_1, b_2 \in \mathbb{C}$, $q|_{\ga_3} = 0$ and $p|_{\ga_4} = 0$. We compute
\begin{align}
\label{eq:compUV}
\begin{split}
 0 & = \int_{\om} u \cdot v = \int_{\om} \nabla f \cdot v = - \int_{\om} f \, \overline{\diver{v}} + \int_{\bnd} f \, \nu \cdot v = \int_{\gac}  f \, \overline{\partial_\tau q} \\
 & = a_2\int_{\ga_2} \overline{\partial_\tau q} = a_2 (\overline{q}(Q)-\overline{q}(P)) = - a_2 \overline{b_1},
\end{split}
\end{align}
where we used the fundamental theorem of calculus. Thus $a_2 = 0$ or $b_1 = 0$. Analogous computations yield
\begin{equation*}
 \int_{\om} |u|^2 = \int_{\om} \nabla f \cdot u = - a_2 \overline{a_1}
\end{equation*}
and 
\begin{equation*}
 \int_{\om} |v|^2 = \int_{\om} v \cdot \nabla p = - b_2 \overline{b_1},
\end{equation*}
and it follows $\int_\Omega |u|^2 = 0$ or $\int_\Omega |v|^2 = 0$, a contradiction.
\end{proof}

\section*{Acknowledgements}
The authors gratefully acknowledge financial support by the grants no.\ 2018-04560 and 2022-03342 of the Swedish Research Council (VR).

\end{document}